\newtheorem{theorem}{Theorem}
\newtheorem{lemma}[theorem]{Lemma}
\newtheorem{example}[theorem]{Example}
\newtheorem{remark}[theorem]{Remark}
\newtheorem{proposition}[theorem]{Proposition}
\newtheorem{definition}[theorem]{Definition}
\theoremstyle{definition}
\newcommand{\bdA}{\boldsymbol{A}}
\newcommand{\bdE}{\boldsymbol{E}}
\newcommand{\cC}{\mathcal{C}}
\newcommand{\cD}{\mathcal{D}}
\newcommand{\cH}{\mathcal{H}}
\newcommand{\cL}{\mathcal{L}}
\newcommand{\cM}{\mathcal{M}}
\newcommand{\cN}{\mathcal{N}}
\newcommand{\cS}{\mathcal{S}}
\newcommand{\cT}{\mathcal{T}}
\newcommand{\cU}{\mathcal{U}}
\newcommand{\cV}{\mathcal{V}}
\newcommand{\cW}{\mathcal{W}}
\newcommand{\RR}{\mathbb R}
\newcommand{\Q}{\mathbb Q}
\newcommand{\Z}{\mathbb Z}
\newcommand{\N}{\mathbb N}
\newcommand{\T}{\mathbb T}
\renewcommand{\S}{\mathbb S}
\newcommand{\e}{\varepsilon}
\renewcommand{\d}{\de}
\renewcommand{\P}{\mathbb{P}}
\def\bE{\mathbf E}
\def\set4{\mathcal I}
\def\tup14{(1,2,3,4)}
\newtheorem*{comm*}{Comment}
\newtheorem*{lemma*}{Lemma}
\newcommand{\R}{\mathbb{R}}
\newcommand{\D}{\mathbb D}
\newcommand{\de}{\delta} %%%%%\delta
\newcommand{\ga}{\gamma}
\newcommand{\wt}{\widetilde}
\newcommand{\si}{\sigma}
\newcommand{\en}{\epsilon_{\circ}}
\newcommand{\lf}{\lfloor}
\newcommand{\rf}{\rfloor}
\newcommand{\al}{\alpha}
\newcommand{\dir}{\textup{dir}}
\newcommand{\tx}{\textup}
\newcommand{\cQ}{\mathcal Q}
\newcommand{\BL}{\textup{BL}}
\begin{document}

 \author{Shengwen Gan}
 \address{Department of Mathematics\\
 Massachusetts Institute of Technology\\
 Cambridge, MA 02142-4307, USA}
 \email{shengwen@mit.edu}

\keywords{orthogonal projection, exceptional set estimate, Brascamp-Lieb inequality}
\subjclass[2020]{28A75, 28A78}

\date{}

\title{Exceptional set estimate through Brascamp-Lieb inequality}
\maketitle

\begin{abstract}
Fix integers $1\le k<n$, and numbers $a,s$ satisfying $0<s<\min\{k,a\}$. The problem of exceptional set estimate is to determine  
\[ T(a,s):=\sup_{A\subset \mathbb{R}^n\ \textup{dim}A=a}\textup{dim}(\{ 
V\in G(k,n): \textup{dim}(\pi_V(A))<s \}). \]
In this paper, we prove a new upper bound for $T(a,s)$
by using Brascamp-Lieb inequality. As one of the corollaries, we obtain the estimate
\[ T(a,\frac{k}{n}a)\le k(n-k)-\min\{k,n-k\}, \]
which improves a previous result $T(a,\frac{k}{n}a)\le k(n-k)-1$ of He.

By constructing examples, we can determine the explicit value of $T(a,s)$ for certain $(a,s)$: 

When $k\le \frac{n}{2}$, $\beta\in(0,1]$ and $\ga\in(\beta,\frac{k}{n}(1+\beta)]$, we have \[T(1+\beta,\ga)=k(n-k)-k.\] 

When $k\ge \frac{n}{2}$, $\beta\in(0,1]$ and $\ga\in (\beta, (1-\frac{k}{n})+\frac{k}{n}\beta]$, we have 
\[T(n-1+\beta,k-1+\ga)=k(n-k)-(n-k).\] 
\end{abstract}

\section{Introduction}

Let $G(k,n)$ be the set of $m$-dimensional subspaces in $\R^n$. For $V\in G(k,n)$, define $\pi_V:\RR^n\rightarrow V$ to be the orthogonal projection onto $V$.
Throughout the paper,
we use $\dim X$ to denote the Hausdorff dimension of the set $X$. 

There is a classical result proved by Marstrand \cite{marstrand1954some}, who showed that if $A$ is a Borel set in $\R^2$, then the projection of $A$ onto almost every line through the origin has Hausdorff dimension $\min\{1,\dim A\}$. This was generalized to higher dimensions by Mattila \cite{mattila1975hausdorff}, who showed that if $A$ is a Borel set in $\R^n$, then the projection of $A$ onto almost every $k$-dimensional plane through the origin has Hausdorff dimension $\min\{k,\dim A\}$. 

There is a finer question known as the exceptional set estimate. It is formulated as follows. Fix integers $1\le k<n$. For $a,s$ satisfying $0<s<\min\{k,a\}$, find 
\begin{equation}\label{defT}
    T(a,s):=\sup_{A\subset \mathbb{R}^n\ \textup{dim}A=a}\textup{dim}(\{ 
V\in G(k,n): \textup{dim}(\pi_V(A))<s \}). 
\end{equation} 
To avoid the measurability issue, we assume the set $A$ is Borel throughout the paper. We remark that $T(a,s)$ also depends on another two parameters $k,n$, but since $k,n$ are fixed, we just drop them from the notation for simplicity.

We call the set 
\begin{equation}\label{exset}
    E_s(A):=\{ V\in G(k,n): \textup{dim}(\pi_V(A))<s \}
\end{equation} 
the exceptional set.
Roughly speaking, the exceptional set is the set of directions where the projection is small. We call $T(a,s)$ the dimension of the exceptional set. Of course, we trivially have $T(a,s)\le \dim(G(k,n))=k(n-k)$. Here are some nontrivial results on the upper bound of $T(a,s)$. 

\begin{enumerate}[label=(\roman*)]
    \item  $T(a,s)\le k(n-k)+s-k$.\label{1}
    \item  $T(a,s)\le \max\{k(n-k)+s-a,0\}.$\label{2}
    \item $T(a,\frac{k}{n}a)\le k(n-k)-1$. \label{3}
\end{enumerate}
The estimate \ref{1} was first proved by Kaufman in $\R^2$ (see \cite{kaufman1968hausdorff}), and proved by Mattila in higher dimensions (see \cite{mattila1975hausdorff}).
For the the estimate \ref{2}, it was originally obtained by Falconer (see \cite{falconer1982hausdorff}) in a slightly different formulation. Peres-Schlag also proved the result (see \cite{peres2000smoothness}).
The estimate \ref{3} was obtained by He (see \cite{he2020orthogonal}). We remark that estimate \ref{3} beats estimate \ref{1} and \ref{2} if $k=1, a\in (0,1+\frac{1}{n-1})$ or $k=n-1,a\in (n-1-\frac{1}{n-1},n)$. 

In the special case $n=2,k=1$, there were some work by Bourgain \cite{bourgain2010discretized}, Orponen and Shmerkin \cite{Orponen2023projections}, \cite{orponen2021hausdorff}. Shortly after this paper, this special case was completely resolved by Ren and Wang \cite{ren2023furstenberg}. They proved that $T(a,s)=\max\{0,2s-a\}$ when $n=2,k=1$.

In this paper, we will explore a new type of the exceptional set estimate, and as one of the result, we can improve \ref{3}.

\subsection{Statement of the main result}
We fix two integers $1\le k<n$ throughout this paper.
The first theorem is an improvement over the estimate \ref{3}.

\begin{theorem}\label{thm}
    Suppose $T(a,s)$ is defined in \eqref{defT}. We have
    \begin{equation}
        T(a,\frac{k}{n}a)\le k(n-k)-\min\{k,n-k\}.
    \end{equation}
\end{theorem}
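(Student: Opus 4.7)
The plan is to argue by contradiction, applying the Brascamp--Lieb inequality to a carefully chosen collection of projections drawn from the exceptional set $E=E_{ka/n}(A)$ of \eqref{exset}. Suppose $\dim E>k(n-k)-\min\{k,n-k\}$. After fixing a small $\eta>0$, pass from $E$ to a slightly smaller exceptional set $E_\sigma$ with $\sigma=\tfrac{k}{n}a-\eta$ of the same Hausdorff dimension, and use Frostman's lemma to produce a measure $\mu$ on this set of Frostman exponent slightly above $k(n-k)-\min\{k,n-k\}$, together with a Frostman measure $\nu$ on $A$ of exponent near $a$. The goal is to extract $V_1,\dots,V_m\in\supp(\mu)$ and non-negative Brascamp--Lieb weights $c_1,\dots,c_m$ with $k\sum_i c_i=n$ such that the Lieb subspace condition
\[
\sum_i c_i\dim\pi_{V_i}(W)\ \ge\ \dim W\qquad(\forall\,W\subset\R^n)
\]
holds with a bounded Brascamp--Lieb constant $\BL(V_\bullet,c_\bullet)$.

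Given such a configuration, the contradiction follows quickly: let $B_i\subset V_i$ be an efficient $\delta$-cover of $\pi_{V_i}(A)$ with $|B_i|\lesssim\delta^{-\sigma}$. Applying BL to the indicators $f_i=\mathbf 1_{B_i^\delta}$, together with the standard lower bound $|A_\delta|\gtrsim\delta^{n-a}$, yields
\[
\delta^{n-a}\ \lesssim\ |A_\delta|\ \le\ \BL(V_\bullet,c_\bullet)\prod_i|B_i^\delta|^{c_i}\ \lesssim\ \delta^{\sum_i c_i(k-\sigma)}\ =\ \delta^{n-a+\eta n/k},
\]
which is impossible as $\delta\to 0$.

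The main obstacle is the extraction of a BL-feasible tuple, and this is where the threshold $k(n-k)-\min\{k,n-k\}$ enters. The Lieb subspace condition can fail only when $\{V_i\}$ concentrate on a Schubert-type sub-Grassmannian of $G(k,n)$. Two basic such subvarieties are
\[
\{V\in G(k,n):V\subset H\}\cong G(k,n-1),\qquad \{V\in G(k,n):V\supset\ell\}\cong G(k-1,n-1),
\]
of codimensions $k$ and $n-k$ respectively, whose minimum is exactly $\min\{k,n-k\}$; any other Schubert stratum corresponding to a failure of the Lieb condition has codimension at least this value. The hypothesis $\dim E>k(n-k)-\min\{k,n-k\}$ therefore prevents $\mu$ from being supported in any such sub-Grassmannian. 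Turning this qualitative fact into a quantitative statement---namely, that a positive $\mu^{\otimes m}$-fraction of $m$-tuples $(V_1,\dots,V_m)$ satisfies the Lieb subspace condition with BL constant bounded independently of $\delta$---is the technical heart of the argument; I would execute this by pigeonholing on the natural stratification of the failure locus inside $G(k,n)^m$ and combining the resulting codimension counts with the non-concentration of the Frostman measure $\mu$ against sub-Grassmannians of small codimension. This genericity step, which upgrades the dimensional hypothesis on $E$ into a hard BL-feasibility statement, is precisely where the improvement from $k(n-k)-1$ to $k(n-k)-\min\{k,n-k\}$ is earned.
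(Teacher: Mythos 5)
Your high-level strategy---to apply a Brascamp--Lieb inequality to projections drawn from $E$ and to locate the threshold $k(n-k)-\min\{k,n-k\}$ via codimensions of Schubert loci where the Lieb subspace condition fails---is the same conceptual starting point as the paper, and your final numerology (applying BL with exponents summing to $n/k$, covering counts $\lesssim\delta^{-\sigma}$, the inner-measure lower bound for $A_\delta$) does close correctly once a BL-feasible configuration with bounded constant is in hand. The identity underlying your codimension claim for a \emph{single} projection, namely that $\{V:\dim\pi_V(W)<\frac{k}{n}\dim W\}$ has codimension at least $\min\{k,n-k\}$ in $G(k,n)$ for every $W$, is exactly what the paper verifies in Section~2 (the inequality $m(t,l)\ge kl/n$), so that part is sound.

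The genuine gap is the step you yourself flag as the ``technical heart'': extracting a single fixed $m$-tuple $(V_1,\dots,V_m)\in\supp(\mu)^m$ with equal weights $c_i=n/(mk)$ satisfying the Lieb condition with bounded constant. Your plan to do this by pigeonholing the failure locus in $G(k,n)^m$ against the Frostman condition runs into a concrete obstruction: the Lieb condition can fail for a tuple even when \emph{every} $V_i$ lies outside the codimension-$\ge\min\{k,n-k\}$ Schubert cycle you describe. Failure only requires the \emph{average} of $\dim\pi_{V_i}(W)$ to drop below $\frac{k}{n}\dim W$, and this can happen through an accumulation of small drops, each occurring on a much larger Schubert stratum. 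For example, take $n=4,\ k=2,\ W\in G(2,4)$: the stratum $\{V:\dim\pi_V(W)\le 1\}$ has codimension $1$, not $\min\{k,n-k\}=2$, yet a tuple with one $V_i\in\{V:\dim\pi_V(W)=0\}$ (codim $4$) and the others merely in this codim-$1$ stratum fails the Lieb condition for $W$. Because of these codimension-$1$ contributions, the naive estimate $\mu^{\otimes m}(\text{failure locus for }w)\lesssim\sum_{\ell\text{-net}}\prod_i\mu(N_\rho(\mathcal M_W^{q_i}))$ does not decay to zero under the sole hypothesis $\tau>k(n-k)-\min\{k,n-k\}$; one only recovers something like $\tau>k(n-k)-1$, i.e.\ He's bound, precisely losing the improvement for $2\le k\le n-2$. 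The union over $W$, which is absent from the paper's argument, is the source of the loss.

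The paper circumvents this in a way your sketch does not capture. It does not select a fixed tuple at all. Instead (Lemmas~\ref{bnlem}--\ref{broadestthm} and the rescaling of Section~5.1) it locates, via a multi-scale broad--narrow decomposition, a scale $K^{-r}$ and $d=\lfloor K^{\tau-\e}\rfloor$ ``significant'' sub-patches, rescales so these become disjoint $K^{-1}$-balls $\wt U_1,\dots,\wt U_d$, and then applies BL (Maldague's theorem) to one representative per patch with all drops below the threshold $m(t,l)$ permitted. The crucial estimate is \emph{uniform in $L$}: since $\cM_L=\{V:\dim\pi_L(V)\le m(t,l)-1\}$ has dimension at most $t$, at most $O(K^t)$ of the $K^{-1}$-separated patches can meet $\cM_L$, while $d\sim K^{\tau-\e}$, so the bad fraction is $\lesssim K^{t-\tau+\e}$. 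There is no union bound over $L$ here; this is a purely geometric count and is exactly what converts the hypothesis $\tau>t$ into control of the BL exponent $\lambda_p$. Your sketch also omits the rescaling step entirely, which is needed both to make the $K^{-1}$-separation argument available and to handle the possibility that $\mu$ concentrates at arbitrarily fine scales. These are not optional refinements of your plan; they are a different mechanism, and they are what actually produce the improvement beyond $k(n-k)-1$.

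One further, more minor, issue: $|A_\delta|\gtrsim\delta^{n-a}$ is not quite correct under a Hausdorff-dimension hypothesis alone; you should work with $|A_\delta|\gtrsim\delta^{n-a+\e}$ for arbitrary $\e>0$, or (as the paper does) discretize via Frostman measures and pigeonhole in scale, which is compatible with the rest of your outline but needs to be said.
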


    We would like to compare the estimate of $T(a,\frac{k}{n}a)$ given by Theorem \ref{thm} with that given by \ref{1} and \ref{2}. 
    
    When $k\le n/2$, Theorem \ref{thm} gives $T(a,\frac{k}{n}a)\le k(n-k)-k$. This automatically beats \ref{1}. And it beats \ref{2} in the range $a\in (0, \frac{nk}{n-k})$.

    When $k\ge \frac{n}{2}$, Theorem \ref{thm} gives $T(a,\frac{k}{n}a)\le k(n-k)-(n-k)$. It beats \ref{1} if $a>\frac{(2k-n)n}{k}$, and it beats \ref{2} if $a<n$. Therefore, it beats both \ref{1} and \ref{2} in the range $a\in (\frac{(2k-n)n}{k},n)$.

\bigskip
To prove Theorem \ref{thm}, we will prove a more technical theorem.
To handle the numerology, we need to set up some notation.
% \begin{definition}
%     For integers $0\le k\le n $, we write $d(k,n)=k(n-k)$ which is the dimension of $G(k,n)$.
% \end{definition}

\begin{lemma}\label{lem0}
    Let $0\le m\le n, 1\le k,l\le n$ be integers. Let $L\in G(l,n)$. We have
    \begin{align}
        \nonumber&\dim(\{V\in G(k,n):\dim(\pi_L(V))\le m\})=\\
        \label{defE}&\left\{\begin{array}{lr}
             0 & m< \max\{0,l+k-n\} \\
             k(n-k)-(k-m)(l-m) & \max\{0,l+k-n\}\le m\le \min\{l,k\}\\
             k(n-k) & m>\min\{l,k\}
        \end{array}\right.
    \end{align}
\end{lemma}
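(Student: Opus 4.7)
The plan is to rewrite the condition on $\pi_L(V)$ as a condition on the intersection of $V$ with the orthogonal complement $W := L^\perp \in G(n-l,n)$, turning $\Sigma := \{V : \dim(\pi_L V) \le m\}$ into a classical Schubert-type variety whose dimension can be computed by direct parametrization.

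First I would apply rank--nullity to the restriction $\pi_L\vert_V : V \to L$, whose kernel is $V \cap W$, to obtain $\dim(\pi_L(V)) = k - \dim(V \cap W)$. Hence
\[
\Sigma = \{V \in G(k,n) : \dim(V \cap W) \ge k-m\},
\]
and I would stratify $\Sigma = \bigcup_{j_0} \Sigma^{j_0}$ by the exact intersection dimension $\Sigma^{j_0} := \{V : \dim(V \cap W) = j_0\}$, with $j_0$ ranging over $\max\{k-m,\,0,\,k-l\} \le j_0 \le \min\{k,\,n-l\}$. Each stratum is parametrized by first choosing $U = V \cap W$ as an element of $G(j_0, W)$ (parameter space of dimension $j_0(n-l-j_0)$) and then choosing $V \in G(k,n)$ containing $U$ (parameter space of dimension $(k-j_0)(n-k)$, via the identification with $G(k-j_0, n-j_0)$); since $U$ is uniquely recovered from $V$ on $\Sigma^{j_0}$, this parametrization is generically injective and
\[
\dim \Sigma^{j_0} = j_0(n-l-j_0) + (k-j_0)(n-k) = k(n-k) - j_0(l-k+j_0)
\]
after a routine algebraic rearrangement.

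The dimension of $\Sigma$ is then the maximum of $\dim \Sigma^{j_0}$ over admissible $j_0$, equivalently the minimum of the quadratic $q(j_0) := j_0(l-k+j_0)$. The vertex of $q$ lies at $j_0 = (k-l)/2$, and in the middle range $\max\{0, l+k-n\} \le m \le \min\{l,k\}$ of the lemma, the bound $m \le \min\{l,k\} \le (k+l)/2$ yields $k - m \ge (k-l)/2$, so $q$ is increasing on $[k-m, \infty)$ and its minimum over admissible $j_0$ is attained at $j_0 = k-m$. Substituting gives the claimed value $k(n-k) - (k-m)(l-m)$. The boundary cases are by inspection: if $m > \min\{l,k\}$ the defining inequality is vacuous and $\Sigma = G(k,n)$, so $\dim \Sigma = k(n-k)$; if $m < \max\{0, l+k-n\}$ then $k-m > n-l = \dim W$, forcing $\Sigma = \emptyset$.

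The only non-mechanical step is the monotonicity argument that selects $j_0 = k-m$ as the optimal stratum, but this collapses to the elementary inequality $m \le (k+l)/2$ guaranteed by the hypothesis $m \le \min\{l,k\}$, so I do not anticipate any real obstacle beyond bookkeeping of the degenerate ranges for $m$.
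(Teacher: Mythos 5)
Your proof is correct and matches the paper's approach: both reduce via rank--nullity to the Schubert cycle $\{V \in G(k,n) : \dim(V\cap L^\perp)\ge k-m\}$ and compute its dimension by parametrization. The paper routes the computation through its Lemma~\ref{easylem} (to replace $\pi_L(V)$ by $\pi_V(L)$) and Lemma~\ref{easylem2} (which parametrizes only the generic stratum with exact intersection dimension $k-m$ and asserts its dominance), whereas you make the stratification over all admissible intersection dimensions $j_0$ and the monotonicity of $q(j_0)=j_0(l-k+j_0)$ explicit — a slightly more self-contained and careful execution of the same idea.
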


\begin{proof}
    Since $\dim(\pi_L(V))\ge \max\{0,l+k-n\}$, we have
    \[ \dim(\{V\in G(k,n):\dim(\pi_L(V))\le m\})=0 \]
    for $m< \max\{0,l+k-n\}$.

    Since $\dim(\pi_L(V))\le \min\{l,k\}$, we have
    \[ \dim(\{V\in G(k,n):\dim(\pi_L(V))\le m\})=k(n-k) \]
    for $m> \min\{l,k\}$.

    It remains to consider the case when $\max\{0,l+k-n\}\le m\le \min\{l,k\}$. First by Lemma \ref{easylem}, we have $\dim(\pi_L(V))=\dim (\pi_V(L))$. We are going to apply Lemma \ref{easylem2} with $W=L$ and the roles of $l$ and $m$ switched. We can verify the conditions of Lemma \ref{easylem2} are satisfied: $n-k\ge l-m$, $m\le k,l$. Therefore,
    \begin{equation}
        \dim(\{V\in G(k,n):\dim(\pi_L(V))\le m\})=d(k-m,n-l)+d(m,n-(k-m)).
    \end{equation}
    Here, $d(k,n)=k(n-k)$.
    We can calculate
    \[ \dim(\{V\in G(k,n):\dim(\pi_L(V))\le m\})=k(n-k)-(k-m)(l-m). \]
\end{proof}

\begin{definition}[Definition of $E(m,l)$]
    We define $E(m,l)$ to be the number in \eqref{defE}. We remark that $E(m,l)$ also depends on $n,k$, but $n,k$ are fixed throughout the paper so we drop them from the notation for simplicity.
\end{definition}

\begin{definition}[Definition of $m(t,l)$]\label{defm}
    For $0\le t< k(n-k)$ and integer $1\le l\le n$, we define $m(t,l)$ to be the smallest non-negative integer so that
\[ E(m(t,l),l)> t. \]
\end{definition}

By \eqref{defE}, we see $0\le m(t,l)\le \min\{l,k\}+1$. We also see that $m(\lfloor t\rfloor,l)=m(t,l)$, since $E(m,l)\in \Z$.

Now, we state our technical theorem.

\begin{theorem}\label{mainthm}
    Fix integers $1\le k<n$. For $0<s<\min\{k,a\}$ and $A\subset \R^n$ with $\dim A=a$, define
    \[ E_s(A):=\{ V\in G(k,n): \dim(\pi_V(A))<s \}. \]
    If $s,t$ satisfy
    \begin{equation}\label{scondition}
        s= \sup_{1\le p\le \infty}\inf_{l\in \Z\cap[0,n]}\bigg(\frac{a-l}{p}+m(t,l)\bigg),
    \end{equation} 
    then
    \[ \dim(E_s(A))\le t. \]
\end{theorem}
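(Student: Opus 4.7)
My plan is a contradiction argument via the Brascamp--Lieb inequality, applied to a configuration consisting of $p$ generic ``exceptional'' $k$-planes together with one auxiliary $l$-plane. Suppose toward contradiction that $\dim E_s(A)>t+\eta$ for some fixed small $\eta>0$. By a Frostman argument and pigeonholing in scale, at every sufficiently small $\delta$ I can extract a $\delta$-separated subset $\mathcal V_\delta\subset E_s(A)$ with $|\mathcal V_\delta|\gtrsim \delta^{-t-\eta}$ and a $\delta$-discretized subset $A_\delta\subset A$ with $|A_\delta|_\delta\gtrsim \delta^{-a+\eta}$ such that $|\pi_V(A_\delta)|_\delta\lesssim \delta^{-s-o(1)}$ for each $V\in\mathcal V_\delta$.

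Now fix $p$ and $l$ approximately attaining the supremum--infimum on the right-hand side of \eqref{scondition}. The Brascamp--Lieb datum I plan to use is the system
\[ (\pi_{V_1},\ldots,\pi_{V_p},\pi_L)\colon \R^n\to V_1\times\cdots\times V_p\times L, \]
with weight $c$ on each $\pi_{V_i}$ and weight $c'$ on $\pi_L$, where the pair $(c,c')$ is calibrated so that the scaling identity $pck+c'l=n$ holds and so that the subspace inequality at the critical dimension $u=l$ is tight. The planes $V_1,\ldots,V_p$ are drawn from $\mathcal V_\delta$, while $L\in G(l,n)$ is chosen generically. The crucial combinatorial input is Lemma \ref{lem0}: for each integer $u\in\{0,\ldots,n\}$ and each fixed $u$-dimensional subspace $U\subset\R^n$, the set $\{V\in G(k,n):\dim\pi_V(U)\le m(t,u)-1\}$ has Hausdorff dimension at most $t$, hence $\delta$-covering number $\lesssim \delta^{-t}$, which is a factor $\delta^{-\eta}$ smaller than $|\mathcal V_\delta|$. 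A Fubini/pigeonhole selection then produces $V_1,\ldots,V_p$ and $L$ so that $\dim\pi_{V_i}(U)\ge m(t,\dim U)$ for every subspace $U$ and every $i$, together with $\dim\pi_L(U)=\min\{l,\dim U\}$; this is exactly the Bennett--Carbery--Christ--Tao subspace condition
\[ \dim U\le pc\cdot m(t,\dim U)+c'\cdot\min\{l,\dim U\},\qquad U\subset\R^n, \]
so the BL constant for this datum is finite.

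Applying the BL inequality to the indicators of the $\delta$-neighborhoods of $A_\delta$ and its projections, and then taking logarithms base $1/\delta$, yields
\[ a-\eta\le spc+c'\min\{l,a\}+o(1)\text{ as }\delta\to 0. \]
With $(c,c')$ chosen to make the subspace inequality at $u=l$ an equality, this rearranges to $s\ge (a-l)/p+m(t,l)-O(\eta)$. Taking the infimum over $l$ and the supremum over $p$, and letting $\eta\to 0$, contradicts \eqref{scondition}, provided the initial strict inequality $\dim E_s(A)>t+\eta$ strictly pushes up $m(\cdot,l)$ at the relevant $l$ (equivalently, crosses one of the threshold values $E(m(t,l),l)$); this happens for $\eta$ small enough by the integrality of $m$ and the definition of $m(t,l)$.

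The main obstacle, as I see it, is twofold. First, the algebraic calibration of $(c,c')$ so that scaling, the subspace condition, and the rearranged final inequality all produce exactly $(a-l)/p+m(t,l)$ requires case analysis, especially at the boundary values $l\in\{0,n\}$ and $p=\infty$ (where the BL datum degenerates and a limiting argument is needed). Second, upgrading the generic selection of $V_i$'s so that the subspace condition holds simultaneously for \emph{every} $u$ and every $u$-dimensional subspace $U$---not merely for one fixed $U$ at a time---demands a careful Fubini/net argument on each $G(u,n)$, together with uniform control of the resulting BL constants. Lemma \ref{lem0} and the definition of $m(t,l)$ are exactly the tools designed to handle these issues.
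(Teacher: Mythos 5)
Your high-level strategy---$\delta$-discretize, use Brascamp--Lieb, and feed the definition of $m(t,l)$ into the BL exponent---is aligned with the paper, and the identity $pck+c'l=n$ together with the rearrangement at the end is set up sensibly. But the central selection step has a gap that cannot be repaired within your framework, and the repair is precisely where the paper's broad--narrow argument comes in.

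You claim that, after a Fubini/pigeonhole argument, one can choose $V_1,\dots,V_p\in E_s(A)$ so that $\dim\pi_{V_i}(U)\ge m(t,\dim U)$ \emph{for every subspace $U\subset\R^n$ and every $i$}. This is impossible. Take $U=V_i^\perp\in G(n-k,n)$; then $\dim\pi_{V_i}(U)=0$, while $m(t,n-k)\ge 1$ for any $t\ge 0$ (because $E(0,n-k)=k(n-k)-k(n-k)=0\le t$, so the smallest $m$ with $E(m,n-k)>t$ is at least $1$). So for every single $V_i$, the condition already fails at $U=V_i^\perp$. The ``bad'' set $\{V:\dim\pi_V(U)\le m(t,\dim U)-1\}$ does have dimension $\le t$ for each fixed $U$, but the union over all $U$ covers all of $G(k,n)$, so no amount of genericity lets you dodge it with finitely many planes. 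A second, related issue is that a $\delta$-separated set $\mathcal V_\delta$ of size $\delta^{-t-\eta}$ could still cluster inside a tiny ball of $G(k,n)$, in which case you cannot even find $p$ planes in general position.

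The paper avoids both problems by not working with a fixed number $p$ of planes at all. It runs a broad--narrow induction on scales (Lemmas \ref{bnlem}--\ref{broadestthm}), rescales, and arrives at a $d$-linear integral where $d=\lfloor K^{\tau-\e}\rfloor$ is \emph{large} (tending to infinity as the auxiliary scale $K\to\infty$). The BL constant then has the form $\sup_L\big(\dim L-\frac{p}{d}\sum_{j=1}^d\dim\pi_{W_j}(L)\big)$, which is an \emph{average} over $d$ patches. For each fixed $L$ the bad patches number only $\lesssim K^t$, a negligible fraction of $d\sim K^{\tau-\e}$ since $t<\tau-2\e_1$, and this is enough to push the BL constant down to $\lambda_p+\e$ even though for each $L$ a few patches contribute nothing. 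In your $p$-linear setup with $p$ bounded, a single bad $V_i$ (which always exists for some $L$) already ruins the estimate. Finally, the paper never inserts an auxiliary $L$-plane into the BL datum; the role you assign to $L$ is played instead by the supremum over subspaces inside the Brascamp--Lieb constant itself, combined with Lemma~\ref{lem0} to bound $\dim\mathcal M_L$ by $t$. If you want to salvage your route you would need to replace ``$p$ planes'' by ``$d\to\infty$ patches'' and a multilinear decomposition, at which point you are essentially rediscovering the paper's argument.
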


\begin{remark}
\rm{
    The relationship between $s$ and $t$
    in \eqref{scondition} has a somewhat weird expression in terms of $\sup$ and $\inf$, because we will apply Brascamp-Lieb inequality during the proof, and the Brascamp-Lieb constant has this form (see \eqref{bl0}).

    It seems hard to find an explicit way to express the right hand side of \eqref{scondition} (without using $\sup$, $\inf$). In this paper, we only calculate a special case $s=\frac{k}{n}a$ which gives Theorem \ref{thm}. It is conceivable that we may obtain some other interesting exceptional set estimates by carefully analyzing \eqref{scondition}.
    }
\end{remark}

In Section \ref{section6}, we will find 
the lower bound of $T(a,s)$ by constructing examples. Together with Theorem \ref{thm}, we will see for certain range of $(a,s)$, the upper bound and lower bound of $T(a,s)$ are the same, which gives the exact value of $T(a,s)$. This is the following theorem.
\begin{theorem}\label{explicitT}
    When $k\le \frac{n}{2}$, $\beta\in(0,1]$ and $\ga\in(\beta,\frac{k}{n}(1+\beta)]$, we have \[T(1+\beta,\ga)=k(n-k)-k.\] 

When $k\ge \frac{n}{2}$, $\beta\in(0,1]$ and $\ga\in (\beta, (1-\frac{k}{n})+\frac{k}{n}\beta]$, we have 
\[T(n-1+\beta,k-1+\ga)=k(n-k)-(n-k).\] 
\end{theorem}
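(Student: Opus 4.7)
\medskip

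\noindent\textbf{Proof plan for Theorem \ref{explicitT}.}
The plan is to match an upper bound from Theorem \ref{thm} with a matching lower bound coming from a direct product-set construction, paired with a special sub-Grassmannian.

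\emph{Upper bound via monotonicity and Theorem \ref{thm}.}  Since $E_s(A)\subset E_{s'}(A)$ whenever $s\le s'$, the quantity $T(a,s)$ is non-decreasing in $s$.  In the first case, the hypothesis $\gamma\le \tfrac{k}{n}(1+\beta)$ and $k\le n/2$ give
\[
T(1+\beta,\gamma)\ \le\ T\bigl(1+\beta,\tfrac{k}{n}(1+\beta)\bigr)\ \le\ k(n-k)-\min\{k,n-k\}\ =\ k(n-k)-k.
\]
In the second case, a small algebra shows $\gamma\le(1-\tfrac{k}{n})+\tfrac{k}{n}\beta$ is equivalent to $k-1+\gamma\le\tfrac{k}{n}(n-1+\beta)$, and $k\ge n/2$ gives
\[
T(n-1+\beta,k-1+\gamma)\ \le\ T\bigl(n-1+\beta,\tfrac{k}{n}(n-1+\beta)\bigr)\ \le\ k(n-k)-(n-k).
\]

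\emph{Lower bound via product constructions.}  For the first case, fix a line $\ell$ through the origin and a Borel set $X\subset \ell^\perp$ with $\dim X=\beta$ (say a self-similar set sitting in some subspace of $\ell^\perp$).  Put $A=\ell+X\subset\R^n$, so $\dim A=1+\beta$.  For every $V\in G(k,n)$ with $V\subset \ell^\perp$ one has $\pi_V(\ell)=\{0\}$, hence $\pi_V(A)=\pi_V(X)$ and $\dim\pi_V(A)\le\beta<\gamma$.  The set of admissible $V$ is the Grassmannian $G(k,n-1)$ of $k$-planes in the hyperplane $\ell^\perp$, of dimension $k(n-1-k)=k(n-k)-k$.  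For the second case, fix a hyperplane $L$, set $\ell=L^\perp$, choose a Borel set $Y\subset\ell$ with $\dim Y=\beta$, and put $A=L+Y$ (intersected with a large ball if boundedness is desired), so $\dim A=(n-1)+\beta$.  For $V\in G(k,n)$ with $\ell\subset V$, write the orthogonal decomposition $V=V'\oplus\ell$ with $V'=V\cap L$, $\dim V'=k-1$; then for $b\in L$, $y\in Y$ one has $\pi_V(b+y)=\pi_{V'}(b)+y$, so $\pi_V(A)=V'+Y$ has dimension at most $(k-1)+\beta<k-1+\gamma$.  The set of admissible $V$ is parametrized by the choice of $V'\in G(k-1,n-1)$ inside $L$, of dimension $(k-1)(n-k)=k(n-k)-(n-k)$.

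\emph{Main obstacle.}  There is no substantive analytic difficulty here once Theorem \ref{thm} is in hand; the real content is the recognition that the ``defect'' $\min\{k,n-k\}$ in the upper bound corresponds exactly to the codimension, inside $G(k,n)$, of the special sub-Grassmannian used in the sharp example --- namely the $k$-planes inside a fixed hyperplane (codimension $k$) in the first case, or the $k$-planes containing a fixed line (codimension $n-k$) in the second.  The only point requiring a little care is verifying $\dim A$ and $\dim\pi_V(A)$ for the product/sum sets, which reduces to the elementary fact that the Hausdorff dimension of $M\times Y$ equals $\dim M+\dim Y$ when $M$ is a smooth submanifold and $Y$ is a Borel set in a transverse subspace.
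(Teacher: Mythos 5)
Your proof is correct and follows essentially the same route as the paper: the upper bound comes from monotonicity in $s$ together with Theorem \ref{thm}, and the lower bound comes from the same construction $A = (\text{integer-dimensional subspace}) + (\text{transverse set of dimension }\beta)$, which the paper packages as the ``Type 2'' example of Proposition \ref{highprop} with $(m,l)=(1,0)$ and $(n-1,k-1)$. The only cosmetic difference is that you identify the exceptional set directly as the sub-Grassmannians $G(k,n-1)$ resp.\ $G(k-1,n-1)$, whereas the paper reads off the same dimension from the Schubert-cycle count in Lemma \ref{easylem2}; in these two instances the Schubert cycle is precisely that sub-Grassmannian, so the computations coincide.
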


\begin{remark}
    {\rm From Theorem \ref{explicitT}, we see that Theorem \ref{thm} is sharp for $a\in (1,2]$ if $k\le \frac{n}{2}$, and for $a\in (n-1,n]$ if $k\ge\frac{n}{2}$,  }
\end{remark}

Here is the structure of the paper. In Section \ref{section2}, we prove Theorem \ref{thm} using Theorem \ref{mainthm}. In Section \ref{section3}, we give a heuristic proof of Theorem \ref{mainthm}. In Section \ref{section4} and \ref{section5}, we prove Theorem \ref{mainthm}. In Section \ref{section6}, we discuss the lower bound of $T(a,s)$ and prove Theorem \ref{explicitT}.

\bigskip

\noindent {\bf Acknowledgement.}
The author would like to thank Prof. Larry Guth for many helpful discussions. The author would also like to thank referees for useful suggestions.

\section{Proof of Theorem \ref{thm}}\label{section2}
\begin{proof}[Proof of Theorem \ref{thm} using Theorem \ref{mainthm}]
We set 
    \[u=\min\{k,n-k\},\ t=k(n-k)-u,\ s=\sup_{1\le p\le \infty}\inf_{l\in \Z\cap[0,n]}\bigg(\frac{a-l}{p}+m(t,l)\bigg).\]
    Theorem \ref{mainthm} implies that 
       \[ T(a,s)\le t.\]
To prove Theorem \ref{thm}, it suffices to show
\begin{equation}
    \frac{k}{n}a\le \sup_{1\le p\le \infty}\inf_{l\in \Z\cap[0,n]}\bigg(\frac{a-l}{p}+m(t,l)\bigg).
\end{equation}
By choosing $p=\frac{n}{k}$, it is further reduced to
\begin{equation}\label{toshow}
    \frac{k}{n}a\le \inf_{l\in \Z\cap[0,n]}\bigg(\frac{(a-l)k}{n}+m(t,l)\bigg).
\end{equation}
By the definition of $u$, we have
\begin{equation}
    k\in \{u,n-u\},\ n\ge 2u.
\end{equation}

We will find an explicit expression for $m(t,l)$. Recall the definition of $m(t,l)$ in Definition \ref{defm}. $m=m(t,l)$ is the smallest non-negative integer satisfying
\[ k(n-k)-(k-m)(l-m)>t, \]
or equivalently,
\[ m^2-(k+l)m+kl+t-k(n-k)<0. \]
We obtain that
\begin{equation*}
    m=\sup\{0,\lf\frac{k+l-\sqrt{(k-l)^2+4(k(n-k)-t)}}{2}\rf+1\}.
\end{equation*}
Plugging $t=k(n-k)-u$, we get
\begin{equation}
    m=\sup\{0,\lf\frac{k+l-\sqrt{(k-l)^2+4u}}{2}\rf+1\}.
\end{equation}
To prove \eqref{toshow}, we just need to prove for any $0\le l\le n$,
\begin{equation*}
    \frac{k}{n}a\le \frac{(a-l)k}{n}+\lf\frac{k+l-\sqrt{(k-l)^2+4u}}{2}\rf+1.
\end{equation*}
Equivalently,
\begin{equation}\label{toshow2}
    \frac{lk}{n}\le \lf\frac{k+l-\sqrt{(k-l)^2+4u}}{2}\rf+1.
\end{equation}

We discuss two cases.

\noindent
\fbox{Case 1: $0\le l\le k$} We write $l=k-\mu$ with $\mu\ge 0$. Then the right hand side of \eqref{toshow2} is
\[ \lf\frac{-\mu-\sqrt{\mu^2+4u}}{2}\rf+k+1. \]
Let $\al$ be the biggest integer such that  $\al<\frac{\mu+\sqrt{\mu^2+4u}}{2}$, or in other words, $\al=-\lf\frac{-\mu-\sqrt{\mu^2+4u}}{2}\rf-1$.
Then \eqref{toshow2} boils down to
\begin{equation}\label{toshow3}
    (k-\mu)k\le n(k-\al).
\end{equation}
There are two subcases.

When $u \le \mu$, then $\al<\frac{\mu+\sqrt{\mu^2+4u}}{2}\le\frac{2\mu+2}{2}$, so $\al\le\mu$. We see \eqref{toshow3} holds since $n>k$.

When $u\ge \mu+1$, we just need to verify \eqref{toshow3} at two endpoints $k=u,k=n-u$. For $k=u$, \eqref{toshow3} becomes $(u-\mu)u\le n(u-\al)$. Using the bound $n\ge 2u$, we just need to show
$u+\mu \ge 2\al$. Noting $\mu^2+4u\le (u+1)^2$, we have $\al<\frac{\mu+u+1}{2}$, and hence $\al\le \frac{\mu+u}{2}$. For $k=n-u$, \eqref{toshow3} becomes $(n-u-\mu)(n-u)\le n(n-u-\al)$, which is equivalent to $u(u+\mu)\le n(u+\mu-\al)$. Using the bound $n\ge 2u$, it boils down to $u+\mu\ge 2\al$ which has been proved.

\medskip
\noindent
\fbox{Case 2: $k\le l\le n$} We write $l=k+\mu$ with $\mu\ge 0$. Then the right hand side of \eqref{toshow2} is
\[ \lf\frac{\mu-\sqrt{\mu^2+4u}}{2}\rf+k+1. \]
Let $\al$ be the biggest integer such that  $\al<\frac{\sqrt{\mu^2+4u}-\mu}{2}$, or in other words, $\al=-\lf\frac{\mu-\sqrt{\mu^2+4u}}{2}\rf-1$.
Then \eqref{toshow2} boils down to
\begin{equation}\label{toshow4}
    (k+\mu)k\le n(k-\al).
\end{equation}
There are two subcases.

When $u \le \mu$, then $\al<\frac{\sqrt{\mu^2+4u}-\mu}{2}\le\frac{\mu+2-\mu}{2}$, so $\al\le0$. We see \eqref{toshow4} holds since $n\ge l=k+\mu$.

When $u\ge \mu+1$, we just need to verify \eqref{toshow4} at two endpoints $k=u,k=n-u$. For $k=u$, \eqref{toshow4} becomes $(u+\mu)u\le n(u-\al)$. Using the bound $n\ge 2u$, we just need to show
$u-\mu \ge 2\al$. Noting $\mu^2+4u\le (u+1)^2$, we have $\al<\frac{u+1-\mu}{2}$, and hence $\al\le \frac{u-\mu}{2}$. For $k=n-u$, \eqref{toshow4} becomes $(n-u+\mu)(n-u)\le n(n-u-\al)$, which is equivalent to $u(u-\mu)\le n(u-\mu-\al)$. Using the bound $n\ge 2u$, it boils down to $u-\mu\ge 2\al$ which has been proved.

\end{proof}

%%%%%%%%%%%%%%%%%%%%%
\section{A heuristic proof of Theorem \ref{mainthm}}\label{section3}

The main idea in the proof of Theorem \ref{mainthm} is as follows. Suppose there is a set $A$ satisfying the condition in Theorem \ref{mainthm} such that
\[ \dim(E_s(A))=\tau>t, \]
then we will derive a contradiction.

We consider the following heuristic setting. $A$ is a disjoint union of $\de$-balls with cardinality $\de^{-a}$. Our exceptional set $E$ is a $\de$-separated subset of $G(k,n)$ with cardinality $\de^{-\tau}$. We also assume $E$ satisfies the $(\de,\tau)$-condition:
\begin{equation}\label{tfrostman}
    \#(E\cap Q_r)\le (r/\de)^\tau,
\end{equation}
for any $Q_r\subset G(k,n)$ being a ball of radius $r$.
The condition that $\dim(\pi_V(A))<s$ is characterized in the following way. For each $V\in E$, we have a set of $(n-k)$-dimensional $\de$-slabs $\T_V=\{T\}$ orthogonal to $V$. In other words, each $T\in\T_V$ is a rectangle of dimensions $\de\times\dots\times\de\times 1\times \dots\times 1$ (with
$n-k$ many $1$'s), and the longest $n-k$ directions are orthogonal to $V$. We also assume $A\subset \bigcup_{T\in\T_V}T$ and $\#\T_V\le \de^{-s+\e_0}$. Under these conditions, one is able to deduce some relationships among the parameters $n,k,a,s,t$. We consider the integral
\begin{equation}\label{integral}
    \int_A (\sum_{V\in E}\sum_{T\in\T_V}1_T)^p, 
\end{equation}
where $1\le p\le \infty$ is to be determined later. Since $A\subset \bigcup_{T\in\T_V}T$, a straightforward observation gives
\[ \int_A (\sum_{V\in E}\sum_{T\in\T_V}1_T)^p\sim |A| (\#E)^p\sim \de^{n-a-\tau p}. \]
On the other hand, we will also obtain an upper bound for the integral.
Obtaining an upper bound for the integral is more complicated. Let us first take a look at a single $\de$-ball $B_\de\subset A$. By our assumption, for each $V\in E$, there is a $T\in\T_V$ such that $T\cap B_\de\neq\emptyset$. Let $K$ be a scale such that $\de\ll K^{-1}\ll 1$, and we partition $G(k,n)$ into $K^{-1}$-balls $\cQ=\{Q\}$ (here $K^{-1}$-ball means ball of radius $K^{-1}$). Denote the number $J=\lfloor K^\tau/2\rfloor$. The key observation is the following inequality.
\begin{equation}\label{mult}
    \de^{-\tau}=\# E\lesssim K^{O(1)} \sum_{Q_1,\dots,Q_J\in\cQ}\bigg(\prod_{j=1}^J\#(Q_j\cap E)\bigg)^{1/J}.
\end{equation}
Here, $Q_1,\dots, Q_J$ are different. 

We give a proof for \eqref{mult}. Label the balls $Q_1, Q_2,\dots,Q_J,\dots $ in $\cQ$ so that $\#(Q_1\cap E)\ge\#(Q_2\cap E)\ge \dots\ge\#(Q_J\cap E)\ge\dots$. We just need to prove $\#(Q_J\cap E)\gtrsim K^{-O(1)}\#E$. By \eqref{tfrostman}, $\sum_{j=1}^J\#(Q_j\cap E)\le J/(K\de)^\tau\le \#E/2$. Therefore,
\[ \#(Q_J\cap E)\ge \frac{1}{\#\cQ} \sum_{j\ge J+1}\#(Q_j\cap E)\ge \frac{1}{\#\cQ} \#E/2\gtrsim K^{-O(1)}\#E. \]

By \eqref{mult}, we can bound the integral \eqref{integral} by the multilinear form
\begin{equation}
    \int_A (\sum_{V\in E}\sum_{T\in\T_V}1_T)^p\lesssim K^{O(1)} \sum_{Q_1,\dots,Q_J\in\cQ}\int \prod_{j=1}^J\bigg(\sum_{V\in Q_j\cap E}\sum_{T\in\T_V}1_T\bigg)^{p/J}.
\end{equation}
The transversality property of these $J$ patches allows us to obtain a nice upper bound. The main tool we use to handle this $J$-linear integral is the Brascamp-Lieb inequality. In the end, we will check the numerology and see the contradiction if $\tau>t$.

\bigskip

Let us work on a very easy case when $n=2,k=1$. We give a heuristic proof for the estimate
\[ \dim (E_{a/2}(A))=0. \]
We just follow the argument as above. We will choose $p=2$.

Suppose by contradiction that $\dim(E_{a/2}(A))>0$. Then there exists $\e_0>0$ such that $\dim(E_{a/2-\e_0}(A))=\tau>0$. On the one hand,
\[ \int_A (\sum_{V\in E}\sum_{T\in\T_V}1_T)^2\sim \de^{2-a-2\tau }. \]
For the upper bound, we notice that $J=\lfloor K^\tau/2\rfloor\ge 2$ (if $K$ is large), so we could just use the bilinear integral as an upper bound:
\[ \int_A (\sum_{V\in E}\sum_{T\in\T_V}1_T)^2\lesssim \int \bigg(\sum_{V\in Q_1\cap E}\sum_{T\in\T_V}1_T\bigg)\bigg(\sum_{V\in Q_2\cap E}\sum_{T\in\T_V}1_T\bigg)\lesssim \de^2 (\#E \de^{-a/2+\e_0})^2,  \]
where $Q_1, Q_2$ are two arcs of $S^1$ with distance $\sim 1$ from each other. Note that $\#E=\de^{-\tau}$. Combining the two estimates gives $1\lesssim \de^{2\e_0}$ which is a contradiction.

\bigskip
For the proof of general $n$ and $k$, our argument is similar. The full details will be given in the next section. 
%%%%%%%%%%%%%%%%%%%%

\section{The proof of Theorem \ref{mainthm}}\label{section4}

\subsection{Preliminary}
We recall some properties of affine Grassmannians from \cite[Section 1.2]{gan2023hausdorff}.

Let $G(k,n)$ be the set of $k$-dimensional subspaces in $\R^n$. For every $k$-plane $V$, we can uniquely write it as
\[ V=\dir(V)+x_V, \]
where $\dir(V)\in G(k,n)$ and $x_V\in V^\perp$. $\dir(V)$ refers to the direction of $V$, as can be seen that $\dir(V)=\dir(V')\Leftrightarrow V\parallel V'$.

In this paper, we use $A(k,n)$ to denote the set of $k$-planes $V$ such that $x_V\in B^n(0,1/2)$. ($B^n(0,1/2)$ is the ball of radius $1/2$ centered at the origin in $\R^n$.)
\[ A(k,n)=\{V: V\tx{~is~a~}k\tx{~dimensional~plane~}, x_V\in B^n(0,1/2)\}. \]
Usually $A(k,n)$ denotes all the $k$-planes in other references, but for our purpose we only care about those $V$ that lie near the origin.

Next, we discuss the metrics on $G(k,n)$ and $A(k,n)$.
For $V_1, V_2\in G(k,n)$, we define
\[ d(V_1,V_2)=\|\pi_{V_1}-\pi_{V_2}\|. \]
Here, $\pi_{V_1}:\R^n\rightarrow V_1$ is the orthogonal projection. We have another characterization for this metric. Define $\rho(V_1,V_2)$ to be the smallest number $\rho$ such that $B^n(0,1)\cap V_1\subset N_{\rho}(V_2)$. We have the comparability of $d(\cdot,\cdot)$ and $\rho(\cdot,\cdot)$.

\begin{lemma}
    There exists a constant $C>0$ (depending on $k,n$) such that
    \[ \rho(V_1,V_2)\le  d(V_1,V_2)\le  C\rho(V_1,V_2).  \]
\end{lemma}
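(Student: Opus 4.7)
My plan is to prove the two inequalities separately. For the easy direction $\rho(V_1,V_2)\le d(V_1,V_2)$, I would take any unit $v\in V_1$, observe that $\pi_{V_1}(v)=v$, and estimate $\dist(v,V_2)\le |v-\pi_{V_2}(v)|=|(\pi_{V_1}-\pi_{V_2})(v)|\le d(V_1,V_2)$. Taking the supremum over such $v$ gives the inequality immediately.

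The reverse direction is the substantive one. The starting point is the algebraic identity
\[
\pi_{V_1}-\pi_{V_2} \;=\; \pi_{V_1}\pi_{V_2^\perp}\;-\;\pi_{V_1^\perp}\pi_{V_2},
\]
obtained by inserting $I=\pi_{V_2}+\pi_{V_2^\perp}$ into $\pi_{V_1}$ and $I=\pi_{V_1}+\pi_{V_1^\perp}$ into $\pi_{V_2}$. The two summands have ranges in $V_1$ and $V_1^\perp$ respectively, so they are pointwise orthogonal, giving
\[
\|(\pi_{V_1}-\pi_{V_2})x\|^2 \;=\; \|\pi_{V_1}\pi_{V_2^\perp}x\|^2 + \|\pi_{V_1^\perp}\pi_{V_2}x\|^2,
\]
and it suffices to bound the two operator norms $\|\pi_{V_1}\pi_{V_2^\perp}\|$ and $\|\pi_{V_1^\perp}\pi_{V_2}\|$ by $O(\rho(V_1,V_2))$. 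The first is easy: by taking adjoints, $\|\pi_{V_1}\pi_{V_2^\perp}\|=\|\pi_{V_2^\perp}\pi_{V_1}\|$, and the hypothesis that every unit $v\in V_1$ lies within $\rho(V_1,V_2)$ of $V_2$ translates exactly to $\|\pi_{V_2^\perp}\pi_{V_1}\|\le \rho(V_1,V_2)$.

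The second operator norm is the main obstacle, because $\rho(\cdot,\cdot)$ is asymmetric in its arguments and we must show that $V_2$ is also close to $V_1$. Here I will crucially use $\dim V_1=\dim V_2=k$. Write $\rho=\rho(V_1,V_2)$ and assume $\rho<1$ (the case $\rho\ge 1/2$ will be handled by the trivial bound $d(V_1,V_2)\le 2$, which already gives $d\le 4\rho$). Then for any $v\in V_1$ we have $|\pi_{V_2}(v)|^2 = |v|^2 - |\pi_{V_2^\perp}(v)|^2 \ge (1-\rho^2)|v|^2$, so $\pi_{V_2}|_{V_1}\colon V_1\to V_2$ is injective, and by the equality of dimensions it is bijective. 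Given $w\in V_2$ with $|w|=1$, write $w=\pi_{V_2}(v)$ for some $v\in V_1$ with $|v|\le 1/\sqrt{1-\rho^2}$. Since $v\in V_1$, we have $\pi_{V_1^\perp}(v)=0$, so
\[
\pi_{V_1^\perp}(w)=\pi_{V_1^\perp}\bigl(v-\pi_{V_2^\perp}(v)\bigr)=-\pi_{V_1^\perp}\pi_{V_2^\perp}(v),
\]
and hence $|\pi_{V_1^\perp}(w)|\le |\pi_{V_2^\perp}(v)|\le \rho|v|\le \rho/\sqrt{1-\rho^2}$. Combining the two bounds yields $d(V_1,V_2)\le C\rho(V_1,V_2)$ with a constant depending only on the case split. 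The key new input, beyond the elementary identity, is the use of equidimensionality to turn $\pi_{V_2}|_{V_1}$ into a near-isometry and thereby recover the missing symmetry $\rho(V_2,V_1)\lesssim \rho(V_1,V_2)$ implicit in the claim.
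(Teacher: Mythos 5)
Your proof is correct and, in fact, considerably more careful than the paper's. The paper establishes the easy direction $\rho\le d$ in essentially the same way you do, but for the harder direction $d\lesssim\rho$ it merely asserts ``for any $v\in\R^n$, $|\pi_{V_1}(v)-\pi_{V_2}(v)|\lesssim\rho|v|$'' without explanation. Your argument fills this gap in a clean way: the identity $\pi_{V_1}-\pi_{V_2}=\pi_{V_1}\pi_{V_2^\perp}-\pi_{V_1^\perp}\pi_{V_2}$ together with orthogonality of ranges reduces the bound to the two operator norms $\|\pi_{V_2^\perp}\pi_{V_1}\|$ (controlled directly by the definition of $\rho(V_1,V_2)$) and $\|\pi_{V_1^\perp}\pi_{V_2}\|$ (which secretly requires $\rho(V_2,V_1)\lesssim\rho(V_1,V_2)$). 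Your observation that $\rho$ is \emph{a priori} asymmetric, and that the equality of dimensions $\dim V_1=\dim V_2=k$ is what makes $\pi_{V_2}|_{V_1}\colon V_1\to V_2$ a near-isometry and hence recovers the missing symmetry, is exactly the point the paper glosses over. That step really does need the equidimensionality: with $\dim V_1<\dim V_2$, $\rho(V_1,V_2)$ can be small while $d(V_1,V_2)$ stays bounded below. One small inconsistency to fix: you write ``assume $\rho<1$'' but then handle $\rho\ge 1/2$ as the trivial case; the threshold in the main argument should be $\rho<1/2$, so that $\rho/\sqrt{1-\rho^2}\le 2\rho/\sqrt{3}$ gives an honest constant (you cannot let $\rho\to 1$ in that bound). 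With that slip corrected, the proof is complete and gives an explicit constant, which is more than the paper provides.
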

\begin{proof}
    Suppose $B^n(0,1)\cap V_1\subset N_\rho(V_2)$, then for any $v\in\R^n$, we have
    \[ |\pi_{V_1}(v)-\pi_{V_2}(v)|\lesssim \rho|v|, \]
    which implies $d(V_1,V_2)\lesssim \rho$. On the other hand, if for any $|v|\le 1$ we have
    \[ |\pi_{V_1}(v)-\pi_{V_2}(v)|\le d|v|, \]
    then we obtain that $\pi_{V_1}(v)\subset N_{d}(V_2)$. Letting $v$ ranging over $B^n(0,1)\cap V_1$, we get $B^n(0,1)\cap V_1\subset N_{d}(V_2)$, which means $\rho(V_1,V_2)\le  d$. 
\end{proof}

We can also define the metric on $A(k,n)$ given by
\begin{equation}\label{defdist}
    d(V,V')=d(\dir(V),\dir(V'))+|x_{V}-x_{V'}|. 
\end{equation} 
Here, we still use $d$ to denote the metric on $A(k,n)$ and it will not make any confusion.

Similarly, for $V,V'\in A(k,n)$ we can define $\rho(V,V')$ to be the smallest number $\rho$ such that $B^n(0,1)\cap V\subset N_\rho(V')$. We also have the following lemma. We left the proof to the interested readers. 

\begin{lemma}\label{comparablelem}
    There exists a constant $C>0$ (depending on $k,n$) such that for $V,V'\in A(k,n)$,
    \[ C^{-1} d(V,V')\le \rho(V,V')\le C d(V,V').  \]
\end{lemma}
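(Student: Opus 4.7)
The plan is to prove the two inequalities separately. The upper bound $\rho \le C d$ will follow immediately from the triangle inequality applied to the natural parametrization of $V'$, while the reverse bound $d \le C \rho$ is more delicate because the one-sided quantity $\rho(V, V')$ has to control both the directional part $d(\dir(V), \dir(V'))$ and the translational part $|x_V - x_{V'}|$ of the metric defined in \eqref{defdist}.

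For the easy direction, given $v \in B^n(0,1) \cap V$, I would decompose $v = u + x_V$ with $u \in \dir(V)$, so that the orthogonality $u \perp x_V$ forces $|u|^2 + |x_V|^2 = |v|^2 \le 1$. The natural nearby point is $p := \pi_{\dir(V')}(u) + x_{V'} \in V'$, and the triangle inequality yields
\[ |v - p| \le |u - \pi_{\dir(V')}(u)| + |x_V - x_{V'}| \le \|\pi_{\dir(V)} - \pi_{\dir(V')}\|\,|u| + |x_V - x_{V'}| \le d(V, V'), \]
where I used $\pi_{\dir(V)} u = u$. Taking the supremum over such $v$ gives $\rho(V, V') \le d(V, V')$, even with constant $1$.

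For the reverse direction I would first extract a bound on $d(\dir(V), \dir(V'))$ and then use it to control $|x_V - x_{V'}|$. Given any unit vector $e \in \dir(V)$, the two points $x_V$ and $x_V + \tfrac12 e$ both lie in $B^n(0,1) \cap V$ because $|x_V| \le 1/2$ and the two summands are orthogonal. Hence there exist $p_0, p_1 \in V'$ at distance $\le \rho := \rho(V,V')$ from them, and the difference $p_1 - p_0 \in \dir(V')$ satisfies $|(p_1 - p_0) - \tfrac12 e| \le 2\rho$, so that $\dist(e, \dir(V')) \le 4\rho$. The preceding lemma on $G(k,n)$ then converts this into $d(\dir(V), \dir(V')) \lesssim \rho$. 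For the translational part, $x_V$ itself belongs to $B^n(0,1) \cap V$, so there is $p_0 \in V'$ with $|x_V - p_0| \le \rho$; writing $p_0 = u_0 + x_{V'}$ with $u_0 \in \dir(V')$ and exploiting $x_V \in \dir(V)^\perp$, I would estimate
\[ |u_0| = |\pi_{\dir(V')}(p_0)| \le \rho + |\pi_{\dir(V')}(x_V)| = \rho + |(\pi_{\dir(V')} - \pi_{\dir(V)})(x_V)| \lesssim \rho, \]
where the last step plugs in the directional bound just proved together with $|x_V| \le 1/2$. Then $|x_V - x_{V'}| \le \rho + |u_0| \lesssim \rho$, and adding the two halves yields $d(V, V') \lesssim \rho(V, V')$.

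The main obstacle is precisely this coupling in the reverse direction: the one-sided hypothesis $B^n(0,1) \cap V \subset N_\rho(V')$ records only how well $V$ near the origin is approximated by $V'$, but it has to recover both the angle between the directions and the affine offset $x_V - x_{V'}$. The resolution is to bootstrap, first pinning down $\dir(V)$ via several test points and then using that to disentangle $u_0$ from $x_{V'}$. The constraint $x_V \in B^n(0,1/2)$ in the definition of $A(k,n)$ is exactly what makes this work, since it guarantees that the test points $x_V + \tfrac12 e$ still lie inside $B^n(0,1)$ so that the hypothesis can be applied to them.
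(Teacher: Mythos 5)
Your proof is correct and complete; the paper itself omits the proof of this lemma (it says ``We left the proof to the interested readers''), so there is no reference argument to compare against. The two directions are handled cleanly: the easy bound $\rho \le d$ comes out with constant $1$ from the decomposition $v = u + x_V$ and the nearby point $\pi_{\dir(V')}(u) + x_{V'}$, and in the reverse direction you correctly identify the real content, namely that the one-sided containment $B^n(0,1)\cap V \subset N_\rho(V')$ must be bootstrapped, first to bound $d(\dir(V),\dir(V'))$ via the two test points $x_V$ and $x_V + \tfrac12 e$ and the already-proved comparability lemma on $G(k,n)$, and then to bound $|x_V - x_{V'}|$ by using $\pi_{\dir(V)}(x_V)=0$ to rewrite $\pi_{\dir(V')}(x_V)$ as $(\pi_{\dir(V')}-\pi_{\dir(V)})(x_V)$. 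The constraint $x_V \in B^n(0,1/2)$ is indeed used exactly where you say, to keep $x_V + \tfrac12 e$ inside $B^n(0,1)$ and to bound $|x_V|$ in the last estimate. One cosmetic remark: when passing from ``$\operatorname{dist}(e,\dir(V')) \le 4\rho$ for all unit $e \in \dir(V)$'' to ``$\rho(\dir(V),\dir(V')) \le 4\rho$,'' you should note the trivial rescaling to nonunit vectors in $B^n(0,1)\cap \dir(V)$, but this is immediate.
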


\begin{definition}\label{Vr}
    For $V\in A(k,n)$ and $0<r<1$, we define 
    \[V_r:=N_r(V)\cap B^n(0,1).  \]
\end{definition}
We see that $V_r$ is morally a slab of dimensions $\underbrace{r\times \dots\times r}_{n-k \textup{~times}}\times \underbrace{1\times \dots\times 1}_{k \tx{~times}}$. We usually call $V_r$ a $k$-dimensional $r$-slab. When $k$ is already clear, we simply call $V_r$ an $r$-slab.
If $W$ is a convex set such that $C^{-1} W\subset V_r\subset C W$, then we also call $W$ an $r$-slab. Here, the constant $C$ will be a fixed large constant.

\begin{definition}
    We say two $r$-slabs $V_r$ and $V'_r$ are comparable if $C^{-1}V_r\subset V'_r\subset C V_r$. We say they are essentially distinct if they are not comparable.
\end{definition}

In this paper, we will also consider the balls and $\de$-neighborhood in $A(k,n)$. Recall that we use $B_r(x)$ to denote the ball in $\R^n$ of radius $r$ centered at $x$. To distinguish the ambient space, we will use letter $Q$ to denote the balls in $A(k,n)$. For $V\in A(k,n)$, we use $Q_r(V)$ to denote the ball in $A(k,n)$ of radius $r$ centered at $V$. More precisely,
\[ Q_r(V_0):=\{V\in A(k,n): d(V,V_0)\le r  \}. \]
For a subset $X\subset A(k,n)$, we use the fancy letter $\cN$ to denote the neighborhood in $A(k,n)$:
\[ \cN_r(X):=\{V\in A(k,n): d(V,X)\le r\}. \]
Here, $d(V,X)=\inf_{V'\in X}d(V,V')$.

\bigskip

Next, we briefly recall how to define the Hausdorff dimension for subsets of a metric space. Let $(M,d)$ be a metric space. For $X\subset M$, we denote the $s$-dimensional Hausdorff measure of $X$ under the metric $d$ to be $\cH^s(X;d)$. 
We see that if $d'$ is another metric on $M$ such that $d(\cdot,\cdot)\sim d'(\cdot,\cdot)$, then $\cH^s(X;d)\sim \cH^s(X;d')$. It make sense to define the Hausdorff dimension of $X$ which is independent of the choice of comparable metrics:
\[ \dim X:= \sup\{s: \cH^s(X;d)>0\}. \]

\subsection{\texorpdfstring{$\d$}{Lg}-discretization}

\begin{definition}\label{dessetsd1}
For a number $\de>0$ and any set $X$ (in a metric space), we use $|X|_\de$ to denote the maximal number of $\de$-separated points in $X$.
\end{definition}

\begin{definition}[$(\de,s)$-set]\label{dessetsd2}
Let $\de,s>0$. For $A\subset \R^n$, we say $A$ is a $(\de,s,C)$\textit{-set} if $A$ is $\de$-separated and satisfies the following estimate:
\[
\# (A\cap B_r(x)) \le C (r/\de)^s
\]
for any $x\in \R^n$ and $r\geq \de$. For the purpose of this paper, the constant $C$ is not important, so we simply say $A$ is a $(\de,s)$-set if it satisfies
\[ \# (A\cap B_r(x)) \lesssim (r/\de)^s. \]
\end{definition}

\begin{remark}
Throughout the rest of this paper, We will use $\#E$ to denote the cardinality of a set $E$ and $\lvert \cdot\rvert$ to denote the measure of a region.
\end{remark}

We state two lemmas:
\begin{lemma}\label{frostmans}
Let $\de,s>0$ and let $B\subset \R^n$ be any set with $\mathcal H^s_\infty(B) =: \kappa >0$. Then, there exists a $(\de,s)$-set $P\subset B$ with $\#P\gtrsim \kappa \de^{-s}$.
\end{lemma}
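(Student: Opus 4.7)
The plan is to extract $P$ via a greedy maximization. Fix a large constant $C_0$ (to be chosen after seeing the computation). Among all $\de$-separated subsets $P' \subset B$ satisfying the non-concentration bound $\#(P' \cap B_r(x)) \le C_0 (r/\de)^s$ for every $x \in \R^n$ and every $r \ge \de$, choose one of largest cardinality; such a maximizer exists after reducing to a bounded subset of $B$ of comparable $s$-content. Call this maximizer $P$. By construction $P$ is a $(\de,s)$-set, so the only remaining task is the lower bound $\#P \gtrsim \kappa\, \de^{-s}$.

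The key step is to extract a cover of $B$ from the maximality of $P$. For each $y \in B \setminus \bigcup_{p \in P} B_\de(p)$, the set $P \cup \{y\}$ is still $\de$-separated, so by maximality it must violate the Frostman bound: there exists $r_y \ge \de$ with
\begin{equation*}
    \#\bigl(P \cap B_{r_y}(y)\bigr) \;\gtrsim\; C_0 (r_y/\de)^s.
\end{equation*}
(The original violating ball $B_r(x)$ contains $y$, so it is contained in $B_{2r}(y)$, costing only a factor of $2^s$.) This yields the cover
\begin{equation*}
    B \;\subset\; \bigcup_{p \in P} B_\de(p) \;\cup\; \bigcup_{y} B_{r_y}(y),
\end{equation*}
where $y$ ranges over points of $B$ not within $\de$ of $P$. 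Apply the Vitali covering lemma to the second family to extract a pairwise disjoint subcollection $\{B_{r_i}(y_i)\}$ whose $5$-dilates still cover the union.

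Plugging this cover into the definition of $\cH^s_\infty$ gives
\begin{equation*}
    \kappa \;=\; \cH^s_\infty(B) \;\le\; \#P \cdot \de^s \;+\; \sum_i (5 r_i)^s.
\end{equation*}
The saturation estimate rewrites as $r_i^s \lesssim C_0^{-1} \de^s\, \#(P \cap B_{r_i}(y_i))$, and pairwise disjointness of the $B_{r_i}(y_i)$ then gives $\sum_i r_i^s \lesssim C_0^{-1} \de^s \#P$. Combining,
\begin{equation*}
    \kappa \;\lesssim\; \de^s\, \#P\, \bigl(1 + 5^s / C_0\bigr),
\end{equation*}
so choosing $C_0$ large enough (depending only on $n,s$) absorbs the second factor and yields $\#P \gtrsim \kappa\, \de^{-s}$.

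The only real obstacle is the choice of the constant $C_0$: it must be fixed \emph{before} building $P$, but the absorbing step that extracts the lower bound on $\#P$ needs $C_0$ to dominate the Vitali factor $5^s$. Since all constants depend only on $n$ and $s$, this is a one-shot numerical choice and introduces no genuine difficulty. A minor technical point is ensuring that ``largest cardinality'' is actually attained, which is handled by first replacing $B$ with a bounded subset still having content $\gtrsim \kappa$.
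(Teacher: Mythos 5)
The paper does not prove this lemma internally; it simply cites Lemma~3.13 of F\"assler--Orponen (the reference \cite{fassler2014restricted}), so there is no ``paper's own proof'' to compare against line-by-line. Your greedy-maximization argument is a correct, self-contained proof and is in the same spirit as the standard one in the literature: fix a Frostman constant $C_0$, take a maximal $\de$-separated set $P\subset B$ respecting the $(\de,s)$-non-concentration bound, observe that maximality forces every uncovered $y\in B$ to lie in a ``saturated'' ball $B_{r_y}(y)$ with $\#(P\cap B_{r_y}(y))\gtrsim C_0(r_y/\de)^s$, then cover $B$ by $\{B_\de(p)\}_{p\in P}$ together with a Vitali-disjointified subfamily of the $B_{r_y}(y)$ and compare against $\cH^s_\infty(B)=\kappa$. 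The saturation bound plus disjointness gives $\sum_i r_i^s\lesssim C_0^{-1}\de^s\#P$, and the content bound closes the argument. All the steps you sketch (the factor-of-two ball enlargement to recenter at $y$, boundedness of the radii $r_y$ so Vitali applies, reduction to a bounded piece of $B$ so a maximizer exists) are handled correctly.

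One small conceptual point: the final ``absorption'' is not actually a constraint on $C_0$. You obtain
\[
\kappa \;\lesssim_s\; \de^s\,\#P\,\Bigl(1+\frac{5^s}{C_0}\Bigr),
\]
and since the lemma only asserts $\#P\gtrsim\kappa\de^{-s}$ with an implied constant depending on $n,s$, the factor $(1+5^s/C_0)$ is harmless for \emph{any} fixed $C_0\ge 2$ --- there is no need for $C_0$ to dominate $5^s$, and the worry you flag at the end (that $C_0$ must be chosen ``before'' seeing Vitali) is not a real tension. Choosing $C_0$ large is fine, just not necessary.
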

\begin{proof}
See \cite{fassler2014restricted} Lemma 3.13.
\end{proof}

\begin{lemma}\label{frostmans3} Fix $a>0$.
Let $\nu$ be a probability measure satisfying $\nu(B_r)\le C r^a$ for any $B_r$ being a ball of radius $r$. If $A$ is a set satisfying $\nu(A)\ge \kappa$ ($\kappa>0$), then for any $\de>0$ there exists a subset $F\subset A$ such that $F$ is a $(\de,a)$-set and $\# F\gtrsim \kappa C^{-1} \de^{-a}$.
\end{lemma}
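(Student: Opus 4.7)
The plan is to reduce Lemma \ref{frostmans3} to the previously stated Lemma \ref{frostmans}, which already produces $(\delta,s)$-sets of the prescribed cardinality inside any set of positive $s$-dimensional Hausdorff content. Since Lemma \ref{frostmans} is available off the shelf, the only task is to convert the measure-theoretic hypothesis $\nu(A)\ge \kappa$ together with the Frostman-type bound $\nu(B_r)\le C r^a$ into a lower bound on $\mathcal H^a_\infty(A)$.

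For that reduction, I would take any countable cover of $A$ by balls $\{B_{r_i}\}_{i}$ and use $\sigma$-subadditivity of $\nu$ together with the Frostman condition:
\[ \kappa\le \nu(A)\le \sum_i \nu(B_{r_i})\le C\sum_i r_i^a. \]
Taking the infimum over all such covers gives $\mathcal H^a_\infty(A)\ge \kappa/C$. At this point I would simply invoke Lemma \ref{frostmans} with $B=A$ and $s=a$ to extract a $(\delta,a)$-set $F\subset A$ with $\#F\gtrsim (\kappa/C)\delta^{-a}$, which is exactly the claimed conclusion. The only minor point to observe is that $A$ need only be $\nu$-measurable for the cover inequality to make sense, which is automatic under the Borel convention in force throughout the paper.

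There is no genuine obstacle in this route; the substantive content has been outsourced to the Hausdorff-content version. If Lemma \ref{frostmans} were unavailable, I would instead run a direct greedy construction, picking $\delta$-separated points of $A$ while iteratively throwing away any ball $B_r(x)$ that carries too much $\nu$-mass to be compatible with the $(\delta,a)$-condition on a surviving sub-collection, and then using the Frostman bound on $\nu$ to show that after removing the offending balls there is still $\nu$-mass $\gtrsim \kappa$ left, forcing the greedy process to run for $\gtrsim \kappa C^{-1}\delta^{-a}$ steps. Given Lemma \ref{frostmans}, however, the content-based reduction above is the cleanest route.
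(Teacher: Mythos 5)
Your proposal is correct and matches the paper's proof essentially verbatim: both reduce to Lemma \ref{frostmans} by bounding $\mathcal H^a_\infty(A)\gtrsim \kappa C^{-1}$, which follows from covering $A$ by balls, applying $\sigma$-subadditivity of $\nu$, and using the Frostman bound $\nu(B_r)\le Cr^a$.
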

\begin{proof}
By the previous lemma, we just need to show $\cH^a_\infty(A)\gtrsim \kappa C^{-1}$. We just check it by definition. For any covering $\{B\}$ of $A$, we have
$$\kappa\le \sum_B\nu(B)\le C \sum_B r(B)^a. $$
Ranging over all the covering of $A$ and taking infimum, we get $$\kappa C^{-1}\lesssim \cH^a_\infty(A).$$
\end{proof}

A key step in the proof of Theorem \ref{mainthm} is the following estimate.

\begin{proposition}\label{discrete1}
Let $0<\tau\le k(n-k), 0<\si\le k$.
For any $\ga\in(0,\frac{1}{10})$, there exists $\de_0(\ga)>0$ depending on $\ga$ such that for any $\de<\de_0(\ga)$, the following is true. 

Suppose $E\subset G(k,n)$ is a $(\de,\tau)$-set with $\#E\gtrsim |\log\de|^{-2}\de^{-\tau}$. Suppose for any $V\in E$, there is a set of $(n-k)$-dimensional $\de$-slabs $\T_V$. Each $T\in\T_V$ lies in $B^n(0,1)$ which has dimensions
\[\underbrace{\de\times \de \times \dots \times \de}_{k \text{~times}}\times \underbrace{1 \times 1 \times \dots \times 1}_{n-k \text{~times}}
\]
such that the $1\times \dots \times 1$-side is orthogonal to $V$, and
\begin{equation}
    \#\T_V\le \de^{-\si}.
\end{equation}
Let $W=\sqcup B_\de\subset B^n(0,1)$ be a disjoint union of $\de$-balls such that for any $B_\de\subset W$,
\begin{equation}
    \#\bigg\{ V\in E: B_\de\cap \bigcup_{T\in\T_V} T\neq \emptyset  \bigg\}\ge |\log\de|^{-3}\# E.
\end{equation}
For $1\le p<\infty$, set $\lambda_p:=\sup_{l\in\Z\cap [0,n]}(l-p \cdot m(t,l))$ (recalling $m(t,l)$ in Definition \eqref{defm}). We have 
\begin{equation}\label{discreteineq}
    \int_{W}\Big(\sum_{V\in E} \sum_{T\in\T_{V}}1_T  \Big)^p\le C_{\ga,p} \de^{-\ga}\de^n\de^{-\lambda_p}(\de^{-\si-\tau})^p.
\end{equation}

\end{proposition}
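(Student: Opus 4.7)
The plan is to make the heuristic of Section~\ref{section3} rigorous by replacing its bilinear step with a full $J$-linear Brascamp--Lieb estimate. I fix an intermediate scale $K=\de^{-\eta}$, where $\eta=\eta(\ga)>0$ will be chosen small at the end, partition $G(k,n)$ into $K^{-1}$-balls $\cQ$, and set $J=\lf K^\tau/2\rf$.

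First I would multilinearize. Using the $(\de,\tau)$-set hypothesis on $E$ exactly as in the proof of \eqref{mult}, I obtain a pointwise bound
\begin{equation*}
\sum_{V\in E}\sum_{T\in\T_V}1_T(x)\le K^{O(1)}\sum_{(Q_1,\ldots,Q_J)\ \text{distinct}}\prod_{j=1}^J g_{Q_j}(x)^{1/J},
\end{equation*}
where $g_Q(x):=\sum_{V\in Q\cap E,\,T\in\T_V}1_T(x)$. Raising to the $p$-th power, integrating over $W$, and noting that there are only $K^{O(1)}$ such tuples, the proposition is reduced to bounding a single multilinear integral $\int_W\prod_{j=1}^J g_{Q_j}^{p/J}$ for an arbitrary $J$-tuple of distinct $K^{-1}$-balls in $G(k,n)$.

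Next I would apply a discretized local Brascamp--Lieb inequality. Choosing representatives $V_j\in Q_j$, any $\de$-slab $T\in\T_V$ with $V\in Q_j$ is contained in an $O(K^{-1})$-slab orthogonal to $V_j$, so $g_{Q_j}(x)\le h_j(\pi_{V_j}x)$ for some non-negative $h_j$ on $V_j$ supported on a union of $K^{-1}$-balls. The integral $\int_{\R^n}\prod_j h_j(\pi_{V_j}x)^{p/J}\,dx$ is then controlled by a Brascamp--Lieb inequality for the projections $\pi_{V_j}:\R^n\to V_j$ with exponents $c_j=p/J$, up to polynomial-in-$K$ losses from the discretization. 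The exponent $\lambda_p=\sup_l(l-p\,m(t,l))$ appearing in \eqref{discreteineq} emerges as the worst Brascamp--Lieb defect: for any subspace $W\subset\R^n$ of integer dimension $l$, Lemma~\ref{lem0} identifies $\{V\in G(k,n):\dim\pi_V(W)\le m\}$ as a subset of dimension $E(m,l)$, so by the $(\de,\tau)$-set property at scale $K^{-1}$ at most $\lesssim K^{E(m,l)}$ of the representatives $V_j$ can have $\dim\pi_{V_j}(W)\le m$. When $m<m(t,l)$ we have $E(m,l)\le t$, which (in the regime $\tau>t$ relevant to the application) is a small fraction of $J\sim K^\tau$, so a positive fraction of the $V_j$ satisfy $\dim\pi_{V_j}(W)\ge m(t,l)$. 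The Bennett--Carbery--Christ--Tao subspace criterion then bounds the effective Brascamp--Lieb constant by $\de^{-\sup_l(l-p\,m(t,l))}=\de^{-\lambda_p}$, as demanded by~\eqref{discreteineq}.

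The main obstacle will be the precise statement and proof of the local Brascamp--Lieb inequality at scale $K^{-1}$: while the continuous Brascamp--Lieb inequality and the Bennett--Carbery--Christ--Tao finiteness criterion are classical, extracting the exponent $\lambda_p$ uniformly over all admissible $J$-tuples with only polynomial-in-$K$ losses, with the correct $L^1$ and $L^\infty$ normalization on the $h_j$'s, and running the defect analysis over all integer-dimensional subspaces $W$ (including edge cases where $m(t,l)$ saturates its extreme values $0$ or $\min\{l,k\}+1$), is the technical heart of the argument. Once this is in place, combining the two steps yields a bound carrying a multiplicative $K^{O(1)}=\de^{-O(\eta)}$ loss, which is absorbed into $\de^{-\ga}$ by choosing $\eta$ small enough relative to $\ga$.
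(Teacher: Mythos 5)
Your high-level plan—multilinearize in direction space, apply a discretized Brascamp--Lieb inequality, and extract the exponent $\lambda_p$ by counting how many directions can see a given subspace $L$ only $m$-dimensionally (via Lemma~\ref{lem0})—is indeed the right shape and is what the paper does. But two technical steps that you either skip or flag as ``obstacles'' are precisely the places where the argument is nontrivial, and your proposed single-scale treatment with $K=\de^{-\eta}$ does not obviously close them.

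First, the multilinearization at a single scale does not go through under the actual hypotheses. In the heuristic proof of \eqref{mult} one uses that $\sum_{j\le J}\#(Q_j\cap E)\le J(K\de)^{-\tau}\le \#E/2$; with $J=\lf K^\tau/2\rf$ this needs $\#E\gtrsim\de^{-\tau}$, whereas here $\#E(B_\de)$ is only guaranteed to be $\gtrsim|\log\de|^{-3}\de^{-\tau}$, a deficit by a power of $|\log\de|$. To recover the multilinear structure the paper runs a genuine broad--narrow induction over a geometric ladder of scales $K^{-1},K^{-2},\dots,K^{-M}$ with $K$ a \emph{fixed} large constant and $M\sim\log\log\de^{-1}$ (Lemmas~\ref{bn}, \ref{bnlem}, \ref{broadestthm}), pigeonholing to a scale $K^{-r}$ at which the density saturates the Frostman bound up to a $K^{-O(1)}$ factor. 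Your choice $K=\de^{-\eta}$ collapses this ladder to a single rung and the pigeonhole disappears.

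Second, and more seriously, with $K=\de^{-\eta}$ the Brascamp--Lieb step is applied to $K^{-1}$-balls in $G(k,n)$ whose radius $\de^{\eta}\to 0$. Maldague's inequality (Theorem~\ref{Domthm}) and the compactness upgrade in Theorem~\ref{BLthm} produce a constant $C(\{\cW_j\},p,\e)$ depending on the $\cW_j$'s through a finite subcover, and there is no a priori reason this constant is only $K^{O(1)}=\de^{-O(\eta)}$ uniformly over all $\de$-dependent configurations of $J\sim\de^{-\eta\tau}$ shrinking balls. You identify this as ``the technical heart'' but offer no mechanism to control it. The paper sidesteps the issue entirely by the rescaling in Section~5: inside the selected $K^{-r+1}$-ball $Q$ it partitions $B^n(0,1)$ into translates $\tau$ of $(V^\circ)^\perp_{K^{-r+1}}$, applies the affine map $\cL_\tau$ to blow each $\tau$ up to $B^n(0,1)$, and pushes the directions forward by $\cL^*$ so that the $U_i$ become $O(K^{-1})$-balls at a \emph{fixed} scale $K^{-1}$. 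Only then is Theorem~\ref{BLthm} invoked, with a constant $C_K$ that is genuinely independent of $\de$. Without this rescaling your ``polynomial-in-$K$ losses'' assertion is unjustified, and this is the gap that would need filling to turn your outline into a proof.
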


\bigskip

We will prove Theorem \ref{mainthm} assuming Proposition \ref{discrete1}, and then come back to the proof of Proposition \ref{discrete1}.
Before starting the proof, we state a very useful lemma. We use the following notation. Fix a dimension $m$. For any $\de=2^{-k}$ ($k\in \mathbb N^+$), let $\cD_\de$ denote the lattice $\de$-cubes in $[0,1]^m$.

\begin{lemma}\label{usefullemma}
Suppose $X\subset [0,1]^m$ with $\dim X< s$. Then for any $\e>0$, there exist dyadic cubes $\cC_{2^{-k}}\subset \cD_{2^{-k}}$ $(k>0)$ so that 
\begin{enumerate}
    \item $X\subset \bigcup_{k>0} \bigcup_{D\in\cC_{2^{-k}}}D, $
    \item $\sum_{k>0}\sum_{D\in\cC_{2^{-k}}}r(D)^s\le \e$,
    \item $\cC_{2^{-k}}$ satisfies the $s$-dimensional condition: For $l<k$ and any $D\in \cD_{2^{-l}}$, we have $\#\{D'\in\cC_{2^{-k}}: D'\subset D\}\le 2^{(k-l)s}$.
\end{enumerate}
\end{lemma}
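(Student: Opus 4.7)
The plan is to first produce a dyadic cover of $X$ with small total $s$-mass, and then iteratively \emph{merge} cubes to enforce the Frostman-type condition (3).

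For the first step, since $\dim X < s$ implies $\mathcal H^s(X)=0$ and hence the Hausdorff $s$-content $\mathcal H^s_\infty(X)=0$, for any given $\eta>0$ (to be chosen in terms of $\e$) we can cover $X$ by a countable family of sets of total $s$-content less than $\eta$. Enlarging each such set to a containing dyadic cube of comparable diameter (at the cost of a constant $C_m$ depending only on $m$), and keeping only the maximal cube in each nested chain, yields an initial essentially disjoint dyadic cover $\mathcal F=\{D_j\}$ of $X$ with $\sum_j r(D_j)^s<C_m\eta$.

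For the second step, call a pair $(D,k)$ with $D\in\cD_{2^{-l}}$ and $k>l$ \emph{violating} if $\#\{D'\in\mathcal F: D'\subset D,\ r(D')=2^{-k}\}>2^{(k-l)s}$. While a violating pair exists, replace $\mathcal F$ by $(\mathcal F\setminus\{D':D'\subsetneq D\})\cup\{D\}$. The key numerical observation is that the removed cubes at scale $2^{-k}$ alone already contribute more than $2^{(k-l)s}\cdot 2^{-ks}=r(D)^s$ to $\sum_{D'\in\mathcal F}r(D')^s$, so each merge strictly decreases the total $s$-mass and also strictly decreases $\#\mathcal F$ (at least two cubes are removed while only one is added). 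Since merging only enlarges the covered region, $\mathcal F$ continues to cover $X$.

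The main obstacle is to show that the procedure stabilizes to a collection with no violating pair. For an initial cover of finite cardinality this is immediate from the strict monotonic decrease of $\#\mathcal F$; for a countable initial cover one either truncates to $\{D_j\}_{j\le N}$, runs the finite procedure to termination, and passes to a limit as $N\to\infty$ by a diagonal argument, or performs a transfinite scale-by-scale iteration, exploiting that at any fixed scale there are only finitely many dyadic cubes in $[0,1]^m$. Once one has a stable terminal collection $\mathcal F_\infty$, setting $\cC_{2^{-k}}:=\mathcal F_\infty\cap\cD_{2^{-k}}$ gives the desired decomposition: condition (1) holds because merging only enlarges coverage; condition (3) is precisely the absence of violating pairs in $\mathcal F_\infty$; and condition (2) follows from $\sum_k\sum_{D\in\cC_{2^{-k}}}r(D)^s\le\sum_j r(D_j)^s<C_m\eta\le\e$ upon choosing $\eta=\e/C_m$ at the outset.
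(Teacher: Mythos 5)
The paper itself gives no proof of this lemma; it simply cites \cite{GGM}, Lemma 2. So the relevant question is whether your self-contained argument is correct on its own terms. The initial covering step, the merging rule, and the observation that each merge strictly decreases both the total $s$-mass and the cardinality of $\cF$ are all fine. The genuine crux of the lemma, though, is exactly the passage from a finite to a countable initial cover, and this is the step you treat most lightly.

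Of your two proposed fixes, the ``transfinite scale-by-scale iteration'' as stated does not obviously work: when you merge into a cube $D'$ at scale $2^{-l'}$ you insert a new element of $\cF$ at scale $2^{-l'}$, and this can create a fresh violating pair $(D_0,l')$ for a \emph{coarser} ancestor $D_0\in\cD_{2^{-l}}$, $l<l'$, whose count at scale $2^{-l'}$ was previously exactly $\lfloor 2^{(l'-l)s}\rfloor$. So processing scales from coarse to fine is not a one-pass procedure, and fine-to-coarse has no base case. The truncation-plus-diagonal route, by contrast, does work, but the key compactness input is missing from your write-up: for a fixed $x\in X$ covered by an initial cube $D_{j_0}$, the unique cube of $\cF_N^\ast$ containing $x$ must \emph{contain} $D_{j_0}$ (merging only promotes to ancestors and $\cF_N^\ast$ is an antichain), so its scale is bounded by that of $D_{j_0}$, independently of $N$. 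This is what makes the set of candidate covering cubes for $x$ finite, lets a diagonal subsequence stabilize the indicator of every dyadic cube, and then forces exactly one cube over $x$ to survive in the limit. Without this observation the diagonal argument is not justified, so as written there is a gap.

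A cleaner way to sidestep termination entirely is to do the whole thing in one shot as a stopping time. Starting from an antichain cover $\cF$ with $\sum_{D\in\cF}r(D)^s<\e<1$, define for each dyadic $D_0$ the mass $\phi(D_0)=\sum_{D\in\cF,\,D\subseteq D_0}r(D)^s$ and call $D_0$ heavy if $\phi(D_0)\ge r(D_0)^s$. Every $D\in\cF$ has a maximal heavy ancestor $\widehat D$ (there are finitely many ancestors, and $D$ itself is heavy), and since $\e<1$ the unit cube is not heavy, so $\widehat D\neq[0,1]^m$. Let $\cC$ be the maximal elements of $\{\widehat D:D\in\cF\}$. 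Then $\cC$ covers $X$; disjointness of the $\cC$-cubes gives $\sum_{D_0\in\cC}r(D_0)^s\le\sum_{D_0\in\cC}\phi(D_0)\le\sum_{D\in\cF}r(D)^s<\e$; and the Frostman bound follows because if $D_0\in\cD_{2^{-l}}$ contained $N>2^{(k-l)s}$ cubes of $\cC$ at scale $2^{-k}$, then $\phi(D_0)\ge N\,2^{-ks}>2^{-ls}$ would make $D_0$ heavy, whence by maximality of heavy ancestors no cube of $\cC$ can lie strictly inside $D_0$, a contradiction. I would encourage you to replace the iterative merging plus limit by this direct construction, or else to spell out the bounded-scale observation that makes your diagonal argument go through.
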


\begin{proof}
See \cite{GGM} Lemma 2.
\end{proof}

\begin{remark}
{\rm 
Besides $[0,1]^m$, this Lemma also works for other compact metric spaces, for example $\S^m$ and $G(k,m)$, which we will use throughout the rest of the paper.
}
\end{remark}

\begin{proof}[Proof of Theorem \ref{mainthm}]
Fix $p\in [1,\infty)$.
Let $s$ be given by
\[ s=\inf_{l\in \Z\cap[0,n]}\bigg(\frac{a-l}{p}+m(t,l)\bigg). \]
Our goal is to prove 
\begin{equation}\label{alsotrue}
    \dim (E_s(A))\le t.
\end{equation} 
If this is true, then we also obtain \eqref{alsotrue} for 
\[s=\sup_{1\le p\le \infty}\inf_{l\in \Z\cap[0,n]}\bigg(\frac{a-l}{p}+m(t,l)\bigg)\]
by a limiting argument. 

By a standard argument, we may assume $A\subset B^n(0,1)$.
We will do a series of reductions.
Noting that 
\[ E_s(A)=\bigcup_{\si<s,\si\in \Q} E_{\si}(A) \]
which is a countable union, we just need to prove
\begin{equation}\label{just1}
    \dim(E_{\si}(A))\le t
\end{equation}
for any $\si<s$.

We choose $\al<\dim(A),\tau<\dim(E_{\si}(A))$. By Frostman’s lemma
there exists a probability measure $\nu_A$ supported on $A$ satisfying $\nu_A(B_r)\le C_{A,\al} r^\al$ for
any $B_r$ being a ball of radius $r$. There exists a probability measure $\nu_E$ supported on $E_{\si}(A)\subset G(k,n)$ satisfying $\nu_E(Q_r)\le C_{E_{\si}(A),\tau} r^\tau$ for
any $Q_r$ being a ball of radius $r$ in $G(k,n)$.
Since our sets $A, E_{\si}(A)$, and numbers $\al,\tau$ are fixed, we may simply write the Frostman's condition as
\begin{equation}\label{frostA}
    \nu_A(B_r)\lesssim r^\al,\ \ \nu_E(Q_r)\lesssim r^\tau. 
\end{equation}
Here the implicit constants depend on $A, E_\si(A),\al, \tau$.
We will later introduce two scales $0<\de\ll K^{-1}\ll 1 $, and what important here is that the implicit constants never depend on $K^{-1},\de$.

We only need to prove
\[
\tau\le t,
\]
as we can send $\tau\rightarrow \dim(E_{\si}(A))$, which gives \eqref{just1}. If this is not true, then there exists $\e_1>0$ such that 
\begin{equation}\label{taubigt}
    \tau>2\e_1+t.
\end{equation} 
We may make $\e_1$ smaller so that 
\begin{equation}\label{sbigsi}
    s>2\e_1+\si.
\end{equation} 
We will derive a contradiction.

Now we start the $\de$-discretization.
Fix a $V\in E_{\si}(A)$. By definition, we have $\dim (\pi_V(A))<\si$. We also fix a small number $\en$ which we will later send to $0$.
By Lemma \ref{usefullemma}, we can find a covering of $\pi_V(A)$ by dyadic cubes $\D_V=\{D\}$, each of which has side-length $2^{-j}$ for some integer $j>|\log_2\en|$. $\D_V$ also satisfies the following property. Define $\mathbb D_{V,j}:=\{D\in\mathbb D_V: r(D)=2^{-j}\}$.
Then we have:
\begin{equation}\label{rsless2}
    \sum_{D\in\mathbb D_V}r(D)^s<1,
\end{equation}
and for each $j$ and $r$-ball $B_r\subset V$, we have
\begin{equation}\label{structure2}
    \#\{D\in \D_{V,j}: D\subset B_r\}\le \left(\frac{r}{2^{-j}}\right)^\si.
\end{equation}
After finding such a $\D_V$, we can define a set of $\de$-slabs by lifting $\D_V$ using the map $\pi_V$. More precisely, we define the set of slabs $\T_{V,j}:=\{\pi^{-1}_V(D): D\in\D_{V,j}\}\cap B^n(0,2)$, $\T_{V}=\bigcup_j\T_{V,j}$. Each slab in $\T_{V,j}$ roughly has dimensions 
\[
\underbrace{2^{-j}\times 2^{-j} \times \dots \times 2^{-j}}_{k \text{~times}}\times \underbrace{1 \times 1 \times \dots \times 1}_{n-k \text{~times}}
\]
such that the $1\times \dots \times 1$-side is orthogonal to $V$. We will use $T$ to denote the slabs in $\T_{V,j}$.  One easily sees that $A\subset \bigcup_{T\in \T_V}T $. By pigeonholing, there exists an integer $j(V)>|\log_2 \en|$ such that
\begin{equation}\label{pigeon1}
    \nu_A(A\cap(\bigcup_{T\in\T_{V,j(V)}}T ))\ge \frac{1}{10j(V)^2}\nu_A(A)=\frac{1}{10j(V)^2}.
\end{equation} 
For each $j>|\log_2\en|$, define $E_{\si,j}(A):=\{V\in E_{\si}(A): j(V)=j\}$. Then we obtain a partition of $E_{\si}(A)$:
\[
E_\si(A)=\bigsqcup_j E_{\si,j}(A).
\]
By pigeonholing again, there exists $j$ such that
\begin{equation}\label{pigeon2}
    \nu_E(E_{\si,j}(A))\ge \frac{1}{10j^2}. 
\end{equation} 
In the rest of the proof, we fix this $j$. We also set $\de=2^{-j}$. We remark that $\de\le \en$. Therefore, we can make $\de\rightarrow 0$ by letting $\en\rightarrow 0$. 

By Lemma \ref{frostmans3}, there exists a $(\de,\tau)$-set $E\subset E_{\si,j}(A)$ with cardinality
\begin{equation}
    \# E\gtrsim |\log \de|^{-2} \de^{-\tau}.
\end{equation}

Next, we consider the set $S:=\{(x,V)\in A\times E: x\in\bigcup_{T\in\T_{V,j}}T \}$. We also use $\mu$ to denote the counting measure on $E$.
Define the sections of $S$:
$$ S_x=\{V\in E: (x,V)\in S\},\ \ \  S_V:=\{x\in A: (x,V)\in S\}. $$
By \eqref{pigeon1} and Fubini, we have
\begin{equation}\label{pigeon3}
    (\nu_A\times \mu)(S)=\sum_{V\in E} \nu_A(S_V)  \ge \frac{1}{10j^2}\# E.
\end{equation}
This implies
\begin{equation}\label{pigeon4}
    (\nu_A\times \mu)\bigg(\Big\{(x,V)\in S: \mu(S_x)\ge\frac{1}{20j^2}\# E  \Big\}\bigg)\ge \frac{1}{20j^2}\#E,
\end{equation}
since
\begin{equation}
    (\nu_A\times \mu)\bigg(\Big\{(x,V)\in S: \mu(S_x)\le\frac{1}{20j^2}\#E  \Big\}\bigg)\le \frac{1}{20j^2}\#E .
\end{equation} 
By \eqref{pigeon4}, we have
\begin{equation}\label{pigeon5}
    \nu_A\bigg(\Big\{x\in A: \mu(S_x)\ge \frac{1}{20j^2}\#E \Big\}\bigg)\ge \frac{1}{20j^2}. 
\end{equation}

Recall $\de=2^{-j}$. By \eqref{pigeon5} and Lemma \ref{frostmans3}, we can find a $\de$-separated  set $H\subset \{x\in A: \# S_x\ge \frac{1}{20j^2}\#E \}$ with cardinality 
\begin{equation}
    \# H\gtrsim |\log\de|^{-2}\de^{-\al}.
\end{equation}

\bigskip

For simplicity, we will omit the subscript $j$ and use $\T_V$ to denote $\T_{V,j}$ which is a set of $(n-k)$-dimensional $\de$-slabs that are orthogonal to $V$. Also by \eqref{structure2},
\begin{equation}
    \#\T_V\le \de^{-\si}. 
\end{equation}
Now we focus on the estimate of the integral
\[ \int_{N_\de(H)}\Big(\sum_{V\in E} \sum_{T\in\T_{V}}1_T  \Big)^p. \]

The lower bound is easy.
By the definition of $H$, we see that for any $x\in H$, there are $\gtrsim |\log\de|^{-2}\#E$ many slabs from $\cup_{V\in E}\T_{V}$ that intersect $x$. In other words,
\[\sum_{V\in E} \sum_{T\in\T_{V}}1_T(x)\gtrsim |\log\de|^{-2}\#E.\]
Therefore, we get the lower bound
\begin{equation}\label{lowerbound}
    \int_{N_\de(H)}\Big(\sum_{V\in E} \sum_{T\in\T_{V}}1_T  \Big)^p\gtrsim |\log\de|^{-2p}\de^n \#H \# E^p.
\end{equation}

Next, we use Proposition \ref{discrete1} to obtain an upper bound. Noting that $H$ is $\de$-separated, $N_\de(H)$ is a disjoint union of $\de$-balls. We let the $W$ in Proposition \ref{discrete1} to be $N_\de(H)$.
We can also check the other conditions in Proposition \ref{discrete1} are satisfied. Therefore, we obtain
\begin{equation}
    \int_{N_\de(H)}\Big(\sum_{V\in E} \sum_{T\in\T_{V}}1_T  \Big)^p\le C_{\ga,p} \de^{-\ga}\de^n\de^{-\lambda_p}(\de^{-\si-\tau})^p.
\end{equation}

Comparing with the lower bound \eqref{lowerbound}, we obtain
\begin{equation}
    \de^n\de^{-\al}\de^{-\tau p}\lesssim_{\ga,p} \de^{-\gamma}|\log\de|^{O(1)} \de^n \de^{-\lambda_p}(\de^{-\si-\tau})^p,
\end{equation}
where $\lambda_p=\sup_{l\in\Z\cap [0,n]}(l-p \cdot m(t,l)).$ Letting $\de\rightarrow 0$ and then $\ga\rightarrow 0$ gives that
\begin{equation}\label{sigmalb}
    \si \ge \frac{\al-\lambda_p}{p}.
\end{equation}
On the other hand, by our choice of $\si$,
\begin{equation}
\si<s-2\e_1=\inf_{l\in\Z\cap [0,n]}(\frac{a-l}{p}+m(t,l))-2\e_1.    
\end{equation}
Combining with \eqref{sigmalb}, we obtain
\begin{equation}
    \inf_{l\in\Z\cap [0,n]}(\frac{\al-l}{p}+m(t,l))<\inf_{l\in\Z\cap [0,n]}(\frac{a-l}{p}+m(t,l))-2\e_1.
\end{equation}
This gives a contradiction by letting $\al\rightarrow a$.

\end{proof}

\section{Proof of Proposition \ref{discrete1}}\label{section5}
We have  
\begin{equation}\label{ineq1}
    \int_{W}\Big(\sum_{V\in E} \sum_{T\in\T_{V}}1_T  \Big)^p\lesssim \de^n\sum_{B_\de\subset W} \#\{T\in \cup_{V\in E}\T_V: B_\de\cap T\neq \emptyset  \}^p.
\end{equation}
For $B_\de\subset W$,
we define the set
\begin{equation}\label{EB}
    E(B_\de):=\{V\in E: B_\de\cap \bigcap_{T\in\T_V}T\neq \emptyset\}.
\end{equation}
Since for each $B_\de$ and $V\in E$, there are $\lesssim 1$ tubes in $\T_V$ that intersect $B_\de$, we have
\[\#\{T\in \cup_{V\in E}\T_V: B_\de\cap T\neq \emptyset  \}\lesssim \# E(B_\de).\]
By the condition of $W$, we have
\[ \#E(B_\de)\gtrsim |\log\de|^{-3}\de^{-\tau}. \]

We can bound \eqref{ineq1} by
\begin{equation}\label{ineq2}
    \int_{W}\Big(\sum_{V\in E} \sum_{T\in\T_{V}}1_T  \Big)^p\lesssim \de^n\sum_{B_\de\subset W} \#E(B_\de)^p.
\end{equation}

Fix a $B_\de\subset W$. We will do the broad-narrow argument for the set $E(B_\de)$.

Fix a large dyadic number $K\gg 1$ which is to be determined later. (Logically, $K$ is chosen before $\de$ and $\de\ll K^{-1}$.) For integers $r\in \N$, we choose $\cQ_{K^{-r}}=\{Q_{K^{-r}}\}$ to be a set of $K^{-r}$-balls that form a partition of $G(k,n)$. (Since $G(k,n)$ is locally diffeomorphic to $[0,1]^{k(n-k)}$, the $K^{-r}$-cubes in the partition of $[0,1]^{k(n-k)}$ gives the partition of $\cQ_{K^{-r}}$.) For $r<r'$ and $Q_{K^{-r}}\in \cQ_{K^{-r}}$, we denote
\[ \cQ_{K^{-r'}}(Q_{K^{-r}}):=\{Q\in \cQ_{K^{-r'}}: Q\subset Q_{K^{-r}} \}. \]

\bigskip

From now on, we use $M,d$ to denote the integers that satisfy
\begin{equation}\label{MN}
    M\sim \log\log \de^{-1},\ d=\lfloor K^{\tau-\e}\rfloor.
\end{equation} 
We have the following lemma.
\begin{lemma}\label{bnlem}
Fix $0<\e<\frac{1}{100}$.
    Suppose $E\subset G(k,n)$ is a $(\de,\tau)$-set with $\#E\ge |\log\de|^{-3}\de^{-\tau}$. If $K$ is large enough depending on $\e$, and $\de$ is small enough depending on $K$, then there exists an integer $r\le M$ such that the following is true.
    There exists $Q_{K^{-r+1}}\in \cQ_{K^{-r+1}}$ such that
    \begin{align}\label{bnlemineq}
        \#\bigg\{Q_{K^{-r}}\in\cQ_{K^{-r}}(Q_{K^{-r+1}}):\#(E\cap Q_{K^{-r}})\ge \de^{-\tau+\e}\bigg\}\ge d.
    \end{align}
\end{lemma}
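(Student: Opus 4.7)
The plan is a multiscale pigeonholing argument by contradiction. Call a ball $Q_{K^{-r}} \in \cQ_{K^{-r}}$ \emph{heavy} if $\#(E \cap Q_{K^{-r}}) \ge \delta^{-\tau+\epsilon}$ and \emph{light} otherwise. If the conclusion fails, then for every $1 \le r \le M$ and every $Q_{K^{-r+1}} \in \cQ_{K^{-r+1}}$, the number of heavy children of $Q_{K^{-r+1}}$ in $\cQ_{K^{-r}}$ is strictly less than $d = \lfloor K^{\tau-\epsilon}\rfloor$. My aim is to show that under this failure assumption $E$ cannot have cardinality as large as $|\log\delta|^{-3}\delta^{-\tau}$, contradicting the hypothesis.

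The structural point driving the argument is that heaviness propagates upward: if $Q_{K^{-r}}$ is heavy, then its parent $Q_{K^{-r+1}}$ contains at least as many points of $E$ and so is heavy too. Starting from the single scale-$1$ ball covering $G(k,n)$ (which is heavy once $\delta$ is small, since $\#E\ge |\log\delta|^{-3}\delta^{-\tau} \gg \delta^{-\tau+\epsilon}$), a direct induction combined with the failure assumption yields
\[ \#\{\text{heavy balls in }\cQ_{K^{-r}}\} \;\le\; d^{r} \;\le\; K^{r(\tau-\epsilon)} \qquad (0\le r\le M). \]
Applying this at $r=M$ and splitting $E$ according to which $K^{-M}$-ball each point lies in,
\[ \#E \;\lesssim\; K^{M(\tau-\epsilon)}\cdot (K^{-M}/\delta)^\tau \;+\; K^{Mk(n-k)}\cdot \delta^{-\tau+\epsilon} \;=\; K^{-M\epsilon}\delta^{-\tau} + K^{Mk(n-k)}\delta^{-\tau+\epsilon}, \]
where the first summand bounds the contribution of the heavy balls using the $(\delta,\tau)$-set condition $\#(E\cap Q_r)\lesssim (r/\delta)^\tau$, and the second bounds the contribution of the $\lesssim K^{Mk(n-k)}$ light balls needed to cover $G(k,n)$ at scale $K^{-M}$.

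The remaining task is to verify that the prescribed window $M\sim \log\log\delta^{-1}$ makes both terms $\ll |\log\delta|^{-3}\delta^{-\tau}$. With $M\asymp \log\log\delta^{-1}$ one has $K^{-M\epsilon}=(\log\delta^{-1})^{-c\,\epsilon\log K}$ for an absolute constant $c>0$, which beats $|\log\delta|^{-3}$ as soon as $K$ is large in terms of $\epsilon$; meanwhile $K^{Mk(n-k)}$ is merely polylogarithmic in $\delta^{-1}$, so $K^{Mk(n-k)}\delta^{\epsilon}\ll |\log\delta|^{-3}$ once $\delta$ is small depending on $K$. The main obstacle is precisely this two-sided balancing: $M$ must be large enough to convert Frostman decay into real savings on the heavy balls, yet small enough that the purely volumetric count $K^{Mk(n-k)}$ of light balls remains affordable, and the doubly-logarithmic scale $M\sim\log\log\delta^{-1}$ is calibrated to sit inside exactly this window.
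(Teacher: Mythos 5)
Your argument is correct, and it takes a genuinely different route from the paper's. The paper proves Lemma~\ref{bnlem} by first establishing the intermediate Lemma~\ref{bn}, which tracks a \emph{single chain} of nested balls with a scale-dependent significance threshold
$K^{-n^4}|\log\de|^{-3}K^{\e(r-1)}2^{-r+1}(K^{r-1}\de)^{-\tau}$: assuming failure, the density ratio $\#(E\cap Q_{K^{-r}})/(K^r\de)^{-\tau}$ is shown to grow by a factor $\approx K^\e/2$ per step, so after $\sim\log\log\de^{-1}$ steps it exceeds the Frostman ceiling, giving a contradiction. You instead work with the fixed threshold $\de^{-\tau+\e}$ from the statement and run a \emph{global count}: upward propagation of heaviness plus the failure assumption give $\le d^r$ heavy balls at scale $K^{-r}$, and then splitting the total mass of $E$ at scale $K^{-M}$ into heavy and light contributions yields $\#E \lesssim K^{-M\e}\de^{-\tau}+K^{Mk(n-k)}\de^{-\tau+\e}$, which is too small. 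Both are pigeonholing proofs calibrated to the same window $M\sim\log\log\de^{-1}$ and the same interplay ($K$ large in $\e$ to win polylog factors, $\de$ small in $K$ to kill the light term and keep $K^{-M}\ge\de$), but yours is more direct: it avoids the auxiliary lemma and the adaptive thresholds, at the cost of the very mild extra observation that heaviness passes to parents. One cosmetic caveat worth spelling out: the $(\de,\tau)$-set condition only gives $\#(E\cap Q_{K^{-M}})\lesssim(K^{-M}/\de)^\tau$ with an implicit constant, and your final inequality needs to beat $|\log\de|^{-3}\de^{-\tau}$ rather than $\de^{-\tau}$ itself; both of these are absorbed by taking $K$ large and $\de$ small, exactly as you indicate.
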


To prove Lemma \ref{bnlem}, we just need to prove the following lemma, which has more complicated numerology but is easier for induction.

\begin{lemma}\label{bn}
Fix $0<\e<\frac{1}{100}$.
    Suppose $E\subset G(k,n)$ is a $(\de,\tau)$-set with $\#E\ge |\log\de|^{-3}\de^{-\tau}$. If $K$ is large enough depending on $\e$, then there exists an integer $r\lesssim \log\log \de^{-1} $ such that the following is true.
    There exists $Q_{K^{-r+1}}\in \cQ_{K^{-r+1}}$ such that
    \begin{equation}\label{bnineq}
    \begin{split}
        \#\bigg\{&Q_{K^{-r}}\in\cQ_{K^{-r}}(Q_{K^{-r+1}}):\\
        &\#(E\cap Q_{K^{-r}})\ge K^{-n^4} |\log\de|^{-3} K^{\e (r-1)} 2^{-r+1}(K^{r-1}\de)^{-\tau}\bigg\}\ge K^{\tau-\e}.
    \end{split}
    \end{equation}
\end{lemma}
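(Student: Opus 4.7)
The plan is to argue by contradiction via an iterative pigeonhole on the scales $K^{-r}$. Suppose the conclusion fails for every $1 \le r \le M$ with $M := \lceil C_0 \log\log\delta^{-1}\rceil$ for a constant $C_0 = C_0(K,\epsilon)$ to be chosen at the end. I will inductively build a nested chain of balls
\[
G(k,n) =: Q^{(0)} \supset Q^{(1)} \supset \cdots \supset Q^{(M)}, \qquad Q^{(r)} \in \cQ_{K^{-r}},
\]
with $N_r := \#(E \cap Q^{(r)})$ satisfying $N_r \ge N_{r-1}/(2K^{\tau-\epsilon})$, and then compare the resulting lower bound on $N_M$ against the $(\delta,\tau)$-Frostman upper bound.

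For the induction step, I look at a parent $Q^{(r-1)} \in \cQ_{K^{-r+1}}$ and its children in $\cQ_{K^{-r}}(Q^{(r-1)})$. Since $G(k,n)$ has dimension $k(n-k)$, there are at most $\lesssim K^{k(n-k)}$ such children. Call a child \emph{heavy} if it contains at least $T_r := K^{-n^4}|\log\delta|^{-3}K^{\epsilon(r-1)}2^{-r+1}(K^{r-1}\delta)^{-\tau}$ points of $E$. By the failing-hypothesis at scale $r$, fewer than $K^{\tau-\epsilon}$ children are heavy, so the light children jointly carry at most $\lesssim K^{k(n-k)}T_r$ points. A direct computation comparing $K^{k(n-k)}T_r$ with the inductive lower bound on $N_{r-1}$ gives a ratio of order $K^{k(n-k)-n^4}$, hence at most $N_{r-1}/2$ once $K$ is large enough (the factor $K^{-n^4}$ in $T_r$ is there precisely to absorb the branching $K^{k(n-k)}$, and the factor $2^{-r+1}$ to absorb the accumulated pigeonhole losses $2^r$). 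Therefore heavy children jointly carry $\ge N_{r-1}/2$ points, and pigeonholing among the fewer than $K^{\tau-\epsilon}$ of them yields a child with at least $N_{r-1}/(2K^{\tau-\epsilon})$ points; take this child to be $Q^{(r)}$.

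Unwinding the recursion, I obtain
\[
N_M \;\ge\; \frac{\#E}{(2K^{\tau-\epsilon})^M} \;\ge\; \frac{|\log\delta|^{-3}\delta^{-\tau}}{(2K^{\tau-\epsilon})^M}.
\]
On the other hand, since $E$ is a $(\delta,\tau)$-set and $K^{-M}\gg\delta$, the Frostman-type bound gives $N_M \lesssim (K^{-M}/\delta)^\tau$. Combining the two and simplifying yields $|\log\delta|^{-3} \lesssim (2K^{-\epsilon})^M$, which for $K$ chosen so large that $K^\epsilon \ge 4$ (depending only on $\epsilon$) forces $M \le C'\log\log\delta^{-1}$ for some absolute $C'$. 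Choosing the constant $C_0 > C'$ in the definition of $M$ produces the desired contradiction.

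The main obstacle is not conceptual but bookkeeping: the threshold $T_r$ has been tuned so that (i) the $K^{-n^4}$ absorbs the ambient branching $K^{k(n-k)}$ uniformly in $r$, (ii) the $2^{-r+1}$ keeps the successive $1/2$ losses from swamping the count, and (iii) the $K^{\epsilon(r-1)}$ factor is exactly what produces the crucial $K^{-\epsilon M}$ deficit when compared against Frostman at the terminal scale. Verifying that all three line up in the induction step and in the final comparison is the only delicate point; once $K$ is chosen large (depending on $\epsilon,n,k$) and then $\delta$ small (so that $M \le \log\delta^{-1}/\log K$, ensuring Frostman applies at scale $K^{-M}$), the argument collapses to the short calculation above.
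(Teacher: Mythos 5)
Your proof is correct and takes essentially the same route as the paper: argue by contradiction, build a nested chain by iterated pigeonholing where the $K^{-n^4}$ factor absorbs the $\lesssim K^{k(n-k)}$ branching and the $2^{-r}$ factor absorbs the accumulated halving losses, and finally compare the resulting lower bound on the terminal ball against the $(\de,\tau)$-Frostman upper bound at scale $K^{-M}$ to force $M\lesssim\log\log\de^{-1}$. The only cosmetic difference is that the paper phrases the induction as the unrolled estimate $\#(E\cap Q_{K^{-r}})\ge|\log\de|^{-3}K^{\e r}2^{-r}(K^{r}\de)^{-\tau}$ rather than your recursion $N_r\ge N_{r-1}/(2K^{\tau-\e})$; these are equivalent once combined with the base case $\#E\ge|\log\de|^{-3}\de^{-\tau}$.
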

\begin{proof}[Proof of Lemma \ref{bn}]
    We show that if \eqref{bnineq} fails for $1,\dots, r_0$, then there exists $Q_{K^{-r_0}}\in\cQ_{K^{-r_0}}$ such that
    \begin{equation}\label{induc1}
        \#(E\cap Q_{K^{-r_0}})\ge |\log\de|^{-3} K^{\e r_0} 2^{-r_0}(K^{r_0}\de)^{-\tau}.
    \end{equation}
For $0\le r\le r_0$, we will inductively find $Q_{K^{-r}}\in\cQ_{K^{-r}}$ so that
\begin{equation}\label{induc}
        \#(E\cap Q_{K^{-r}})\ge |\log\de|^{-3} K^{\e r} 2^{-r}(K^{r}\de)^{-\tau}.
    \end{equation}
When $r=0$, noting $\cQ_0= G(k,n)$, \eqref{induc} holds by the assumption. Suppose we have done \eqref{induc} for $r-1$, so there exists $Q_{K^{-r+1}}$ such that
\begin{equation}\label{lowQ}
        \#(E\cap Q_{K^{-r+1}})\ge |\log\de|^{-3} K^{\e (r-1)} 2^{-r+1}(K^{r-1}\de)^{-\tau}.
\end{equation}

Fix this $Q_{K^{-r+1}}$. For simplicity, we define the significant balls at scales $K^{-r}$ contained in $Q_{K^{-r+1}}$ as follows. 
\[\cS_{K^{-r}}:=\]
\[ \bigg\{Q_{K^{-r}}\in\cQ_{K^{-r}}(Q_{-r+1}): \#(E\cap Q_{K^{-r}})\ge K^{-n^4} |\log\de|^{-3} K^{\e (r-1)} 2^{-r+1}(K^{r-1}\de)^{-\tau}\bigg\}. \]
We also let
\[ \cS^c_{K^{-r}}:=\cQ_{K^{-r}}(Q_{K^{-r+1}})\setminus \cS_{K^{-r}}. \]
The reason we call them significant balls is that the cardinality $\#(E\cap Q_{K^{-r}})$ almost reaches the Frostman upper bound: $\#(E\cap Q_{K^{-r}})\lesssim (K^r\de)^{-\tau}$. 

We have
\begin{equation}\label{leftright}
    \#(E\cap Q_{K^{-r+1}})=\sum_{Q_{K^{-r}}\in\cS_{K^{-r}}}\#(E\cap Q_{K^{-r}})+\sum_{Q_{K^{-r}}\in\cS^c_{K^{-r}}}\#(E\cap Q_{K^{-r}})=:I+II.
\end{equation}

Since by the assumption \eqref{bnineq} fails, we have
\begin{equation}\label{I}
    I\le \#\cS_{K^{-1}} \max_{Q_{K^{-r}}} \#(E\cap Q_{K^{-r}})\le K^{\tau-\e}\max_{Q_{K^{-r}}}\#(E\cap Q_{K^{-r}}).
\end{equation}
We also have the estimate for $II$.
\begin{align}\label{II}
    II&\le \#\cQ_{K^{-r}}(Q_{K^{-r+1}})K^{-n^4} |\log\de|^{-2} K^{\e (r-1)} 2^{-r+1}(K^{r-1}\de)^{-\tau}\\
    \nonumber &\le \frac{1}{2}|\log\de|^{-3} K^{\e (r-1)} 2^{-r+1}(K^{r-1}\de)^{-\tau},
\end{align}
if $K>10$.

Plugging \eqref{I}, \eqref{II} into \eqref{leftright} and noting \eqref{lowQ}, we obtain
\begin{equation}
    \frac{1}{2}|\log\de|^{-3} K^{\e (r-1)} 2^{-r+1}(K^{r-1}\de)^{-\tau}\le K^{\tau-\e}\max_{Q_{K^{-r}}}\#(E\cap Q_{K^{-r}}).
\end{equation}
In other words, there exists $Q_{K^{-r}}\in \cQ_{K^{-r}}$ such that
\[ \#(E\cap Q_{K^{-r}})\ge |\log\de|^{-2} K^{\e r} 2^{-r}(K^{r}\de)^{-\tau}.\]
This finishes the proof of \eqref{induc}.

Now, we finish the proof of lemma. If \eqref{bnineq} fails for all $r\lesssim \log\log\de^{-1}$, then by \eqref{induc} and noting $E$ is a $(\de,\tau)$-set, we have
\begin{equation}
    (K^{r}\de)^{-\tau}\gtrsim |\log\de|^{-2} K^{\e r} 2^{-r}(K^{r}\de)^{-\tau},
\end{equation}
for some $r\sim \log\log \de^{-1}$ which gives a contradiction, provided $\frac{K^\e}{2}\gg 1$.
\end{proof}

\bigskip

The following is the broad estimate.

\begin{lemma}\label{broadestthm}
Fix $0<\e<\frac{1}{100}$.
Suppose $E(B_\de)\subset G(k,n)$ is a $(\de,\tau)$-set with $\# E(B_\de)\ge |\log\de|^{-3}\de^{-\tau}$. 
If $K$ is large enough depending on $\e$, and $\de$ is small enough depending on $K$, then we have
    \begin{equation}\label{broadest}
        \#E(B_\de)\lesssim_\e \de^{-\e} \sum_{r=1}^M\sum_{Q_{K^{-r+1}}\in\cQ_{K^{-r+1}}}  \sum_{U_1,\dots,U_d\in \cQ_{K^{-r}}(Q_{K^{-r+1}})}\prod_{i=1}^d \#(E(B_\de)\cap U_i)^{\frac{1}{d}}. 
    \end{equation}
Here in the $3$rd summation, $U_1,\dots,U_d$ are different.
\end{lemma}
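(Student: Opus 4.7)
The plan is to deduce \eqref{broadest} essentially as a direct corollary of Lemma \ref{bnlem} combined with the trivial upper bound on $\#E(B_\de)$ coming from the $(\de,\tau)$-set hypothesis. All of the substantive broad/narrow iteration has already been packaged into Lemma \ref{bnlem}, so no further induction on scales is required at this step --- Lemma \ref{broadestthm} is really just a repackaging of that conclusion in a form suitable for the Brascamp--Lieb argument of the subsequent subsections.

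First I would record the Frostman-type upper bound. Since $E(B_\de)$ is a $(\de,\tau)$-set in $G(k,n)$ and $G(k,n)$ has bounded diameter, applying Definition \ref{dessetsd2} with $r = O(1)$ yields
\[ \# E(B_\de) \lesssim \de^{-\tau}. \]
Next, I would apply Lemma \ref{bnlem} directly to $E(B_\de)$: the hypotheses $\# E(B_\de) \ge |\log\de|^{-3}\de^{-\tau}$ and the $(\de,\tau)$-set condition match the lemma verbatim. This produces an integer $r \in \{1,\dots,M\}$, a ball $Q_{K^{-r+1}} \in \cQ_{K^{-r+1}}$, and (by selecting $d$ distinct balls from the collection whose cardinality is at least $d$) an ordered $d$-tuple $U_1,\dots,U_d \in \cQ_{K^{-r}}(Q_{K^{-r+1}})$ of distinct balls with
\[ \#(E(B_\de)\cap U_i) \ge \de^{-\tau+\e}, \qquad i=1,\dots,d. \]

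Finally I would observe that the geometric mean of these quantities satisfies
\[ \prod_{i=1}^d \#(E(B_\de)\cap U_i)^{1/d} \ge \de^{-\tau+\e}, \]
so the single term corresponding to our distinguished triple $(r, Q_{K^{-r+1}}, (U_1,\dots,U_d))$ already contributes $\de^{-\tau+\e}$ to the right-hand side of \eqref{broadest}. Since the triple sum is over non-negative terms, it is bounded below by this one term, and therefore
\[ \de^{-\e} \sum_{r=1}^M \sum_{Q_{K^{-r+1}}} \sum_{U_1,\dots,U_d} \prod_{i=1}^d \#(E(B_\de)\cap U_i)^{1/d} \ge \de^{-\e}\cdot \de^{-\tau+\e} = \de^{-\tau} \gtrsim \# E(B_\de), \]
which is exactly \eqref{broadest}.

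I do not expect a genuine obstacle at this step --- the only things to verify are that the $d$ balls from Lemma \ref{bnlem} can indeed be taken to be pairwise distinct (which is built into the statement, since we demanded the count to be $\ge d$) and that the $\de^{-\e}$ slack in \eqref{broadest} comfortably absorbs the gap between $\de^{-\tau}$ and $\de^{-\tau+\e}$. The real difficulty in the broad/narrow decomposition lives in Lemma \ref{bn} / Lemma \ref{bnlem}, where the pigeonholing across $M \sim \log\log\de^{-1}$ scales and the Frostman hypothesis are used together to force the appearance of a large ``broad'' configuration; once that is granted, Lemma \ref{broadestthm} follows in a few lines as above.
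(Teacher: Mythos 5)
Your proposal is correct and follows the same route as the paper: apply Lemma \ref{bnlem} to locate a single triple $(r, Q_{K^{-r+1}}, (U_1,\dots,U_d))$ with $\#(E(B_\de)\cap U_i)\ge \de^{-\tau+\e}$ for each $i$, then combine with the Frostman bound $\#E(B_\de)\lesssim\de^{-\tau}$ to show that this one term of the sum already dominates the left-hand side. The only difference is purely expository (you lower-bound the right-hand side, the paper upper-bounds the left-hand side); the underlying inequality is identical.
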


\begin{remark}
{\rm
    We explain why it is called the broad estimate. We first observe that the number of terms in the summation on the RHS of \eqref{broadest} is $\le M C_K$ ($C_K$ is some large constant depending on $K$), which is negligible compared to $\de^{-\e}$ if $K$ is a fixed large constant. (Recall $M$ in \eqref{MN}.) Therefore, we may assume one term on the RHS dominates the LHS. In other words, there exist patches $U_1,\dots,U_d$ in $G(k,n)$ such that
    \[ \#E(B_\de)\lessapprox \prod_{i=1}^d \#(E(B_\de)\cap U_i)^{\frac{1}{d}}. \]
It is equivalent to 
\[ \#E(B_\de)\lessapprox  \#(E(B_\de)\cap U_i), \]
for $i=1,\dots,d$.
This shows that $E(B_\de)$ is not concentrated in a single patch $U_i$, but instead has a significant portion in a ``broad" range of patches $\{U_i\}_{i=1}^d$.
}
\end{remark}

\begin{proof}[Proof of Lemma \ref{broadestthm}]
    Apply Lemma \ref{bnlem} to the set $E=E(B_\de)$. There exists an integer $r\le M$, $Q_{K^{-r+1}}\in \cQ_{K^{-r+1}}$, and $U_1,\dots,U_d\in \cQ_{K^{-r}}(Q_{K^{-r+1}})$ such that
    \begin{equation}
        \#(E(B_\de)\cap U_i)\ge \de^{-\tau+\e}
    \end{equation}
    for $i=1,\dots, d$.
Since $E(B_\de)$ is a $(\de,\tau)$-set, we have $\#E(B_\de)\lesssim \de^{-\tau}$.    
Therefore, we obtain
\[ \#E(B_\de)\lesssim \de^{-\e}\prod_{i=1}^d \#(E(B_\de)\cap U_i)^{\frac{1}{d}}, \]
which finishes the proof of \eqref{broadest}.
\end{proof}

Plugging to \eqref{ineq2}, we obtain

\begin{equation}\label{left}
\begin{split}
    &\int_{W}\Big(\sum_{V\in E} \sum_{T\in\T_{V}}1_T  \Big)^p\\
    &\lesssim_\e \de^{-\e}\sum_{r=1}^M\sum_{Q\in\cQ_{K^{-r+1}}}  \sum_{U_1,\dots,U_d\in \cQ_{K^{-r}}(Q)} \sum_{B_\de\subset W}\de^n  \prod_{i=1}^d \#(E(B_\de)\cap U_i)^{\frac{p}{d}}\\
    &\lesssim \de^{-\e}\sum_{r=1}^M\sum_{Q\in\cQ_{K^{-r+1}}}  \sum_{U_1,\dots,U_d\in \cQ_{K^{-r}}(Q)} \int_{B^n(0,1)} \prod_{i=1}^d (\sum_{V\in E\cap U_i}\sum_{T\in\T_V}1_T)^{\frac{p}{d}}.
    \end{split}
\end{equation}
In the first inequality, we use the fact that the number of summands is $\le M C_K$ which can be controlled by $\de^{-\e}$.
In the last inequality, we use the fact that $1_T$ is locally constant on $\de$-balls for $T\in\T_V$.

\bigskip

\subsection{Rescaling} We will discuss the rescaling in this subsection. While the notation may be complicated, the geometry behind it is simple. In this subsection, our notation is: we use letter $V$ for a point in $G(k,n)$, i.e., a $k$-dimensional plane, while letter $U$ for some small ball in $G(k,n)$.

Fix a $Q\in\cQ_{K^{-r+1}}$ and $U_1,\dots,U_d\in\cQ_{K^{-r}}(Q)$.
We will write 
\begin{equation}\label{resc}
    \int_{B^n(0,1)} \prod_{i=1}^d (\sum_{V\in E\cap U_i}\sum_{T\in\T_V}1_T)^{\frac{p}{d}}
\end{equation}
in a rescaled version.

Let $V^\circ\in G(k,n)$ be the center of $Q$. In other words, $Q=Q_{K^{-r+1}}(V^\circ)$. Let $(V^\circ)^\perp_{K^{-r+1}}$ be the $K^{-r+1}$-neighborhood of $(V^\circ)^\perp$ that is truncated in $B^n(0,1)$ (see Definition \ref{Vr}).
Then $(V^\circ)^\perp_{K^{-r+1}}$ is roughly a rectangle of dimensions
\[ \underbrace{K^{-r+1}\times K^{-r+1} \times \dots \times K^{-r+1}}_{k \text{~times}}\times \underbrace{1 \times 1 \times \dots \times 1}_{n-k \text{~times}}. \]
For any $V\in Q$, by Lemma \ref{comparablelem}, we have $V\cap B^n(0,1)\subset C V^\circ_{K^{-r+1}}$. Therefore, any $T\in \T_V$ appeared in \eqref{resc} is contained in a translation of $C (V^\circ)^\perp_{K^{-r+1}}$.

\begin{figure}[ht]
\centering
\begin{minipage}[b]{0.85\linewidth}
\includegraphics[width=11cm]{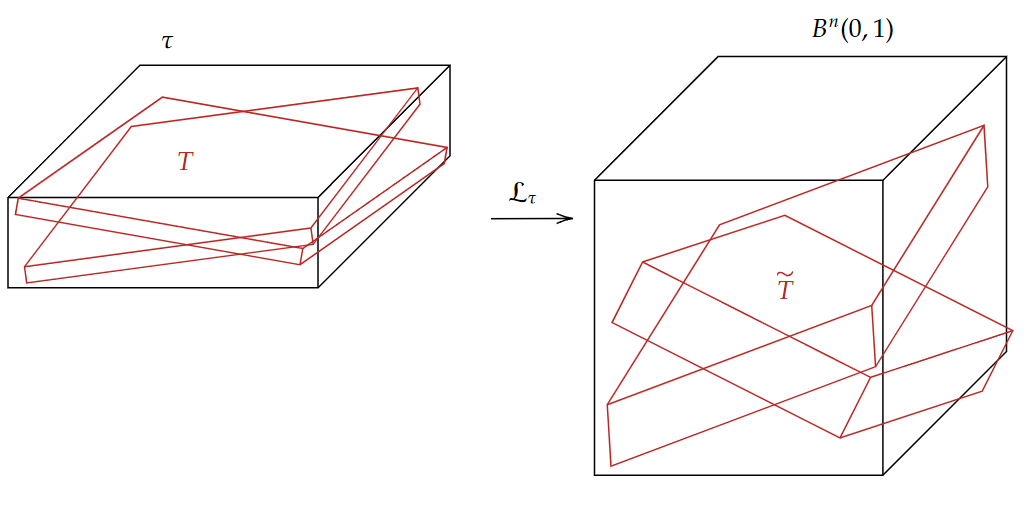}
\caption{Rescaling}
\label{rescale}
\end{minipage}
\end{figure}

To estimate \eqref{resc}, we will first partition the integration domain. We partition $B^n(0,1)$ into slabs denoted by $\cT=\{\tau\}$. Here each $\tau$ is a translation of $ (V^\circ)^\perp_{K^{-r+1}}$, and $\#\cT\sim (K^{r-1})^{k}$.
Since for each fixed $T\in \T_V$, there exists a $\tau\in\cT$ such that $T\subset C\tau$ for some constant $C$. Therefore,
\begin{equation}\label{bdd}
    \#\{ \tau\in\cT: T\cap \tau\neq \emptyset \}\lesssim 1. 
\end{equation} 

We have
\begin{equation}\label{ineq0}
    \int_{B^n(0,1)} \prod_{i=1}^d (\sum_{V\in E\cap U_i}\sum_{T\in\T_V}1_T)^{\frac{p}{d}}=\sum_\tau \int_{\tau} \prod_{i=1}^d (\sum_{V\in E\cap U_i}\sum_{T\in\T_V, T\cap \tau\neq \emptyset}1_T)^{\frac{p}{d}}.
\end{equation}

Next, we will do the rescaling for each $\tau$. We first translate $\tau$ to the origin and do $K^{r-1}$-dilation in the orthogonal directions of $\tau$, so that $\tau$ roughly becomes $B^n(0,1)$. We denote this rescaling map by $\cL_\tau=\cL\circ T_\tau$, where $T_\tau$ is the translation and $\cL$ is the dilation. After the rescaling, we see that each $T$ such that $T\cap \tau\neq\emptyset$ becomes a $\de K^{r-1}\times \dots\times \de K^{r-1}\times 1\times \dots\times 1$-slab contained in $B^n(0,1)$. We define them to be $\wt T$. See Figure \ref{rescale}.
Taking into consideration of the rescaling factor, we have
\begin{equation}\label{ineq22}
   \int_{\tau} \prod_{i=1}^d (\sum_{V\in E\cap U_i}\sum_{T\in\T_V, T\cap\tau\neq\emptyset}1_T)^{\frac{p}{d}}=K^{-(r-1)(n-k)}\int_{B^n(0,1)} \prod_{i=1}^d (\sum_{V\in E\cap U_i}\sum_{T\in\T_V, T\cap \tau\neq\emptyset}1_{\wt T})^{\frac{p}{d}}.
\end{equation} 

We want to rewrite the right hand side. Note that if $T\in \T_V$, then the core of $T$ is orthogonal to $V$, and hence parallel to $V^\perp$. Therefore, after the rescaling, the core of $\wt T$ is parallel to $\cL(V^\perp)$, and hence orthogonal to $(\cL(V^\perp))^\perp$. This motivates us to define the following map
\begin{equation*}
    \begin{split}
        \cL^*: G(k,n)\rightarrow G(k,n)\\
        \cL^*(V)=(\cL(V^\perp))^\perp.
    \end{split}
\end{equation*} 
We see that if $T\in\T_V$, then the core of $\wt T$ is orthogonal to $\cL^*(V)$.

Next, we note that under the map $\cL^*$, the $K^{-r}$-ball $U_i$ in $G(k,n)$ becomes an $O(K^{-1})$-ball which we denote by 
\[\wt U_i:=\cL^*(U_i)=\{\cL^*(V): V\in U_i\}.\] 
Actually, one can verify that if $U_i=Q_{K^{-r}}(V_i)$, then $\wt U_i$ is morally an $O(K^{-1})$-ball centered at $\cL^*(V_i)$. We briefly explain the reason: Since taking orthogonal complement gives an identification between $G(k,n)$ and $G(n-k,n)$, we can view $\{V^\perp: V\in U_i\}$  as a $K^{-r}$-ball in $G(n-k,n)$ centered at $V_i^\perp$; Then by Lemma \ref{comparablelem}, we see that $\{\cL(V^\perp):V\in U_i\}$ is morally an $O(K^{-1})$-ball centered at $\cL(V_i^\perp)$; Finally by taking the orthogonal complement back, we see that $\wt U_i=\{ (\cL(V^\perp))^\perp:V\in U_i \}$ is morally an $O(K^{-1})$-ball centered at $\cL^*(V_i)$.

For $\wt V\in\wt U_i$, there is a $V\in U_i$ such that $\cL^*(V)=\wt V$. Define
\begin{equation}
    \wt\T_{\wt V,\tau}:=\{ \wt T: T\in\T_{V}, T\cap \tau\neq\emptyset \}.
\end{equation}
By \eqref{bdd}, we have
\begin{equation}\label{sumTV}
    \sum_\tau \#\wt\T_{\wt V,\tau}\lesssim \#\T_V.
\end{equation}
Now we can rewrite \eqref{ineq22} as
\begin{equation}\label{ineq3}
    \int_{\tau} \prod_{i=1}^d (\sum_{V\in E\cap U_i}\sum_{T\in\T_V, T\cap\tau\neq\emptyset}1_T)^{\frac{p}{d}}=K^{-(r-1)(n-k)}\int_{B^n(0,1)} \prod_{i=1}^d (\sum_{\wt V\in \cL^*(E\cap U_i)}\sum_{\wt T\in\wt\T_{\wt V,\tau}}1_{\wt T})^{\frac{p}{d}}.
\end{equation}
The reason we do the rescaling is to use Brascamp-Lieb inequality at scale $K^{-1}$, as can be seen on the right hand side of \eqref{ineq3} that $\cL^*(E\cap U_i)$ is contained in a $O(K^{-1})$-ball in $G(k,n)$. We will talk about it in the next subsection.

\bigskip

\subsection{Brascamp-Lieb inequality}
The version of Brascamp-Lieb inequality we are going to use is due to Maldague. See Theorem 2 in \cite{maldague2022regularized}.

\begin{theorem}[Maldague]\label{Domthm}
    Let $W_j\in G(k,n)$ for $j=1,\dots,J$. Fix $p\in[1,J]$. Define
    \begin{equation}\label{bl0}
        \BL(\{W_j\}_{j=1}^J,p):=\sup_{L\leq \R^n }(\dim L-\frac{p}{J}\sum_{j=1}^J\dim\pi_{W_j}(L)). 
    \end{equation} 
Here, $L\le \R^n$ means $L$ is a subspace of $\R^n$, and $\pi_{W_j}:\R^n\rightarrow W_j$ is the orthogonal projection. 

There exists $\nu>0$ depending on $\{W_j\}_{j=1}^J$, so that the following is true. For any $\e>0$ and for any $\cV_{j}=\{V_{j}\}\subset A(n-k,n)\ (j=1,\dots,J)$ being sets of $(n-k)$-planes, such that each $V_{j}\in \cV_{j}$ is orthogonal to some $W\in Q_\nu(W_j)\subset G(k,n)$, we have
\begin{equation}
    \int_{[-1,1]^n} \prod_{j=1}^J\bigg(\sum_{V_{j}\in\cV_{j}}1_{V_{j,\de}} \bigg)^{\frac{p}{J}}\lesssim \de^{n-\e}\de^{-\BL(\{W_j\}_{j=1}^J,p)}\prod_{j=1}^J\bigg(\#\cV_{j}\bigg)^{\frac{p}{J}}.
\end{equation}
\end{theorem}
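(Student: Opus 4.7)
My plan is to derive this slab-form Brascamp–Lieb inequality from the linear Brascamp–Lieb inequality of Bennett--Carbery--Christ--Tao, together with the continuity of the Brascamp--Lieb constant in the projection data (Bennett--Bez--Buschenhenke--Cowling--Flock, hereafter BBBCF), and an induction on scales in the style of Guth.

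First I would dualize. For each $(n-k)$-plane $V \in \cV_j$ orthogonal to some $W \in Q_\nu(W_j)$, the thickened slab satisfies $1_{V_{\delta}}(x) = 1_{B_\delta}(\pi_W(x) - \pi_W(x_V))$. Partitioning each $Q_\nu(W_j)$ into $O_\nu(1)$ smaller balls and regrouping the $V$'s accordingly, I may assume all $V \in \cV_j$ are orthogonal to a single $k$-plane $W_j' \in Q_\nu(W_j)$; the inner sum then has the form $g_j \circ \pi_{W_j'}$ with $g_j = \sum_V 1_{B_\delta^{W_j'}(\pi_{W_j'}(x_V))}$, a sum of $\#\cV_j$ indicator functions of $\delta$-balls in $W_j'$. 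This reduces the left-hand side to a Brascamp--Lieb-type integral $\int_{[-1,1]^n} \prod_j (g_j \circ \pi_{W_j'})^{p/J}$.

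Next I would apply the Kakeya--Brascamp--Lieb inequality to the projection data $\{W_j'\}$ at exponent $p/J$: it produces a bound of the form $C \cdot \delta^{n - \BL(\{W_j'\}, p)} \prod_j (\#\cV_j)^{p/J}$, with $\BL(\{W_j'\}, p) = \sup_L (\dim L - (p/J)\sum_j \dim \pi_{W_j'}(L))$, the exponent arising from the extremal subspace $L$ (which detects the worst scaling defect). By the continuity of the Brascamp--Lieb constant under small perturbations of the projection data, choosing $\nu$ small enough ensures $\BL(\{W_j'\},p) \le \BL(\{W_j\},p) + o_\nu(1)$, a discrepancy that can be absorbed into the $\delta^{-\epsilon}$ loss.

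The $\delta^{-\epsilon}$ factor and the uniformity in the $\nu$-perturbation are produced by an induction on scales of Guth type: one partitions $[-1,1]^n$ into $K^{-1}$-balls, applies a base Brascamp--Lieb inequality at scale $K^{-1}$, rescales each ball back to unit size so that slab thickness becomes $\delta K$, and iterates $\sim \log_K(\delta^{-1})$ times; choosing $K$ large makes the per-step loss $K^{O(1)}$ accumulate to at most $\delta^{-\epsilon}$. The main obstacle is verifying that the Brascamp--Lieb constant is well-behaved under each rescaling and remains controlled under the $\nu$-perturbation---this is the technical heart, requiring the BBBCF continuity theorem and its $\delta$-regularized refinement.
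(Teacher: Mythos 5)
The paper does not prove this theorem at all; it cites it verbatim as Theorem~2 of Maldague's paper on regularized Brascamp--Lieb, so there is no in-paper proof to compare against. Your sketch does name the right ingredients (the scaling defect $\BL$, stability of the Brascamp--Lieb datum under perturbation, and a Guth-style multiscale induction), which are indeed the ones appearing in Maldague's argument and in Guth's earlier slab-form Brascamp--Lieb estimates.

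However, your first reduction step has a genuine gap. You claim that after partitioning $Q_\nu(W_j)$ into $O_\nu(1)$ smaller balls you may assume all $V\in\cV_j$ are orthogonal to a \emph{single} $k$-plane $W_j'$, so that $\sum_V 1_{V_\de}$ becomes $g_j\circ\pi_{W_j'}$ for a sum $g_j$ of $\de$-balls in $W_j'$. This is not achievable by a fixed partition: the sub-balls still have radius independent of $\de$, so within each sub-family the normals $W$ still vary over a ball whose radius is $\gg\de$. A $\de$-slab orthogonal to $W$ is not (even approximately) the preimage under $\pi_{W_j'}$ of a $\de$-ball when $d(W,W_j')\gg\de$; over $[-1,1]^n$ the tilt produces a shear of size $\sim d(W,W_j')$, and flattening it to a $\pi_{W_j'}$-level set would force the effective thickness up to the angular gap, not $\de$. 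Refining the partition until the angular gap is $\lesssim\de$ costs polynomially many pieces in $\de^{-1}$ and destroys the estimate. The whole point of the $\nu$-perturbed formulation is that one cannot reduce to exact level sets of a fixed $\pi_{W_j'}$; the tilt must be carried through the multiscale induction, where at each step one passes to a finer ball and rescales, and only then does the angular error at the current scale become negligible. As written, the sketch performs the reduction before the induction, which is the step that fails. A corrected version would drop the reduction to $g_j\circ\pi_{W_j'}$ and instead run the Bourgain--Guth / Guth-type induction on scales directly on the tilted slab families, using the BBBCF-type continuity of $\BL$ only to compare the Brascamp--Lieb data that arise at successive scales (all within $Q_\nu(W_j)$) to the base datum $\{W_j\}$.
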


By a compactness argument, we deduce the following result.

\begin{theorem}\label{BLthm}
    Let $\cW_j\subset G(k,n)$ be a compact set for $j=1,\dots,J$. Fix $p\in[1,\infty)$. 
Define
\begin{equation}\label{bl}
\BL(\{\cW_j\}_{j=1}^J,p):=\sup_{W_1\in\cW_1,\dots,W_J\in\cW_J }\BL(\{W_j\}_{j=1}^J,p). 
\end{equation}
Fix any $\e>0$.
Then for any $\cV_{j}=\{V_{j}\}\subset A(n-k,n)\ (j=1,\dots,J)$ being sets of $(n-k)$-planes, such that each $V_{j}\in \cV_{j}$ is orthogonal to some $W\in\cW_j$, we have
\begin{equation}\label{BL}
    \int_{[-1,1]^n} \prod_{j=1}^J\bigg(\sum_{V_{j}\in\cV_{j}}1_{V_{j,\de}} \bigg)^{\frac{p}{J}}\le C(\{\cW_j\},p,\e) \cdot \de^{n-\e}\de^{-\BL((\cW_j)_{j=1}^J,p)}\prod_{j=1}^J\bigg(\#\cV_{j}\bigg)^{\frac{p}{J}}.
\end{equation}
\end{theorem}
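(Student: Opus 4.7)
The strategy is to upgrade Maldague's local version (Theorem \ref{Domthm}), in which the $k$-planes $W_j$ are fixed points of $G(k,n)$, to the global version over the compact sets $\cW_j$ by a finite-covering argument on the compact product $\cW := \cW_1 \times \cdots \times \cW_J$.

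First I would invoke Theorem \ref{Domthm} pointwise on $\cW$: for every $\vec W = (W_1,\dots,W_J)\in \cW$ it produces a radius $\nu(\vec W)>0$ such that the desired inequality holds with constant $\BL(\{W_j\},p) \le \BL(\{\cW_j\},p)$ for all tubes orthogonal to some $W \in Q_{\nu(\vec W)}(W_j)$. The product balls $U_{\vec W} := \prod_{j=1}^J Q_{\nu(\vec W)}(W_j)$ form an open cover of $\cW$, and compactness extracts a finite subcover $\{U^{(\alpha)}\}_{\alpha=1}^N$ with centers $\vec W^{(\alpha)} = (W_1^{(\alpha)},\dots,W_J^{(\alpha)})$ and radii $\nu^{(\alpha)}$.

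Next, since the $j$-th projection of the cover satisfies $\cW_j \subseteq \bigcup_\alpha Q_{\nu^{(\alpha)}}(W_j^{(\alpha)})$, each $V_j \in \cV_j$ can be assigned a label $\alpha_j(V_j)\in\{1,\dots,N\}$ so that a chosen $W(V_j)\in\cW_j$ orthogonal to $V_j$ lies in $Q_{\nu^{(\alpha_j(V_j))}}(W_j^{(\alpha_j(V_j))})$. This induces partitions $\cV_j = \bigsqcup_{\alpha=1}^N \cV_j^{(\alpha)}$. Using the scalar bound $(\sum_{\alpha=1}^N a_\alpha)^{p/J} \lesssim_{N,p,J} \sum_\alpha a_\alpha^{p/J}$ on each of the $J$ factors, I can bound the integrand pointwise by a sum of $N^J$ terms indexed by tuples $\vec\alpha \in \{1,\dots,N\}^J$, each an integral of the same shape with $\cV_j$ replaced by $\cV_j^{(\alpha_j)}$. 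For each tuple $\vec\alpha$, the assembled center $(W_1^{(\alpha_1)},\dots,W_J^{(\alpha_J)})$ is itself a point of $\cW$, so a second application of Theorem \ref{Domthm} at this mixed center controls the corresponding integral with BL-constant $\BL(\{W_j^{(\alpha_j)}\},p) \le \BL(\{\cW_j\},p)$. Summing the $N^J$ pieces and absorbing their count into $C(\{\cW_j\},p,\e)$ yields the claim.

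The main technical obstacle is a radius-compatibility issue: for Theorem \ref{Domthm} applied at the mixed center to cover the relevant tubes, one needs each original radius $\nu^{(\alpha_j)}$ to be at most the new radius $\nu(\vec\alpha)$ that Theorem \ref{Domthm} supplies for the mixed tuple. I would resolve this by first extracting any finite subcover, computing the positive quantity $\eta_0 := \min_{\vec\alpha \in \{1,\dots,N\}^J} \nu(\vec\alpha)$ over the finitely many mixed tuples of its centers, and then refining the cover to use balls of common radius at most $\eta_0$; the Lebesgue number lemma (applied to a sufficiently fine $\eta_0$-net of $\cW$) ensures this refined family still covers $\cW$, closing the argument.
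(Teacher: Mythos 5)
Your overall strategy matches the paper's sketch: cover the compact product $\cW := \cW_1\times\cdots\times\cW_J$ by the product balls supplied by Theorem~\ref{Domthm}, extract a finite subcover, partition each $\cV_j$ accordingly, and split the integral by the elementary inequality $(\sum a_\alpha)^{p/J}\lesssim \sum a_\alpha^{p/J}$. You have also correctly put your finger on a detail the paper's ``the proof is not hard'' brushes past: after partitioning $\cV_j$ by the $j$-th projections of the subcover, the relevant tuples $(W_1^{(\alpha_1)},\dots,W_J^{(\alpha_J)})$ are generically \emph{mixed} tuples rather than subcover centers, and there is no a priori reason the radii $\nu^{(\alpha_j)}$ should be $\le\nu(\vec\alpha)$.

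The problem is that your proposed repair is circular. You compute $\eta_0=\min_{\vec\alpha}\nu(\vec\alpha)$ from the mixed tuples of the \emph{original} subcover's centers, and then refine to balls of radius $\le\eta_0$. But refining produces \emph{new} centers, hence new mixed tuples, and $\eta_0$ says nothing about $\nu$ at those; the Lebesgue number lemma gives you a number $\lambda$ controlling inclusion into the $U^{(\alpha)}$'s, not a lower bound on $\nu$ at new points. The clean fix is to never apply Theorem~\ref{Domthm} at a mixed center at all. Cover $\cW$ by the \emph{half}-radius balls $B(\vec W,\nu(\vec W)/2)$, extract a finite subcover $\{B(\vec W^{(\alpha)},\nu^{(\alpha)}/2)\}_{\alpha=1}^N$, and set $\nu_0:=\tfrac12\min_\alpha\nu^{(\alpha)}$. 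Partition each $\cW_j$, and hence each $\cV_j$, by $\nu_0$-balls $Q_{\nu_0}(W_j^{(\beta_j)})$ with $W_j^{(\beta_j)}\in\cW_j$. For a tuple $\vec\beta$, the point $(W_1^{(\beta_1)},\dots,W_J^{(\beta_J)})$ lies in $\cW$, hence in some $B(\vec W^{(\alpha)},\nu^{(\alpha)}/2)$, so for \emph{every} $j$ the directions of the tubes in $\cV_j^{(\beta_j)}$ lie within $\nu_0+\nu^{(\alpha)}/2\le\nu^{(\alpha)}$ of $W_j^{(\alpha)}$ for that single $\alpha$; now Theorem~\ref{Domthm} applies at the genuine subcover center $\vec W^{(\alpha)}$ with constant $\BL(\{W_j^{(\alpha)}\},p)\le\BL(\{\cW_j\},p)$, and summing over the finitely many $\vec\beta$ gives~\eqref{BL}. (An equivalent route: take a $\lambda/3$-net of each $\cW_j$, $\lambda$ the Lebesgue number of the original subcover, and again apply Theorem~\ref{Domthm} at whichever $\vec W^{(\alpha)}$ absorbs the resulting product ball.)
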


We briefly explain how to use compactness argument to deduce Theorem \ref{BLthm} from Theorem \ref{Domthm}. For any $\vec W=(W_1,\dots,W_J)\in \cW_1\times \dots\times \cW_J$, we use Theorem \ref{Domthm} to find a small number $\nu$, so that \eqref{BL} is true when all the slabs are in $\nu$-neighborhood of $W_j^\perp$. In other words, there exists a $O(\nu)$-neighborhood of $(W_1,\dots,W_J)$, for which we call $B(\vec W)$, so that \eqref{BL} is true when all the slabs are in $B(\vec W)$. Then we use the compactness of $\cW_1\times \dots\times \cW_J$ to find a finite covering using $B(\vec W)$. Using this finite covering and splitting the LHS of \eqref{BL} by triangle inequality, the proof of \eqref{BL} is not hard.

\bigskip
We continue the estimate of \eqref{ineq22}.
We recall \eqref{ineq3} here:
\begin{equation}
     \int_{\tau} \prod_{i=1}^d (\sum_{V\in E\cap U_i}\sum_{T\in\T_V, T\cap\tau\neq\emptyset}1_T)^{\frac{p}{d}}=K^{-(r-1)(n-k)}\int_{B^n(0,1)} \prod_{i=1}^d (\sum_{\wt V\in \cL^*(E\cap U_i)}\sum_{\wt T\in\wt\T_{\wt V,\tau}}1_{\wt T})^{\frac{p}{d}}.
\end{equation}
Note that for each $i$, $\cL^*(E\cap U_i)\subset \cL^*(U_i)=\wt U_i$ is an $O(K^{-1})$-ball. Applying Theorem \ref{BLthm} with $J=d$, $\cW_i=\wt U_i$, $\{V_{i,\de}\}=\bigcup_{\wt V\in \cL^*(E\cap U_i)}\T_{\wt V,\tau}$,
we obtain
\begin{equation}\label{ineq44}
\begin{split}
    \int_{B^n(0,1)} \prod_{i=1}^d (\sum_{\wt V\in \cL^*(E\cap U_i)}\sum_{\wt T\in\wt\T_{\wt V,\tau}}1_{\wt T})^{\frac{p}{d}} &\le  C_K \de^{n-\e^2}\de^{-\BL}\prod_{j=1}^d(\sum_{\wt V\in \cL^*(E\cap U_i)}\# \wt \T_{\wt V,\tau})^{\frac{p}{d}}\\
    &\le  C_K \de^{n-\e^2}\de^{-\BL}(\sum_{\wt V\in \cL^*(E)}\# \wt \T_{\wt V,\tau})^{p}
\end{split}
\end{equation}
Here,
\begin{equation}\label{defBL}
    \BL=\sup_{V_1\in \wt U_1,\dots,V_d\in\wt U_d} \sup_{L\le \R^n}(\dim L-\frac{p}{d} \sum_{j=1}^d\dim \pi_{V_j}(L) ). 
\end{equation} 

Combining \eqref{ineq0}, \eqref{ineq22}, \eqref{ineq3}, \eqref{ineq44}, and noting that we just bound the non-important factor $K^{-(r-1)(n-k)}$ in \eqref{ineq3} by $1$, we obtain
\begin{equation}\label{upperbound99}
\begin{split}
    \int_{B^n(0,1)} \prod_{i=1}^d (\sum_{V\in E\cap U_i}\sum_{T\in\T_V}1_T)^{\frac{p}{d}}&\le  C_K \de^{n-\e^2}\de^{-\BL} \sum_\tau (\sum_{\wt V\in \cL^*(E)}\# \wt \T_{\wt V,\tau})^p\\
    &\le  C_K \de^{n-\e^2}\de^{-\BL}  (\sum_\tau\sum_{\wt V\in \cL^*(E)}\# \wt \T_{\wt V,\tau})^p\\
    (\textup{by~} \eqref{sumTV})&\lesssim  C_K \de^{n-\e^2}\de^{-\BL} (\sum_{V\in E}\#\T_V)^p
\end{split}
\end{equation}

Next, we consider the quantity $\BL$. Fix 
$L\in G(l,n)$. Recalling the definition of $m(t,l)$ in Definition \ref{defm}, we have
\begin{equation}
    E(m(t,l)-1,l)\le t.
\end{equation}
Noting that $E(m,l)$ is given by \eqref{defE}. For $L\in G(l,n)$, we have
\begin{equation}
    \dim(\{ V\in G(k,n): \dim(\pi_L(V))\le m(t,l)-1 \})=E(m(t,l)-1,l)\le t.
\end{equation}
This means that 
\begin{equation}\label{submfd}
    \cM_L:=\{ V\in G(k,n): \dim(\pi_L(V))\le m(t,l)-1 \}
\end{equation} 
is at most a $t$-dimensional submanifold of $G(k,n)$. We also remark that for different $L\in G(l,n)$, the submanifolds \eqref{submfd} are isometric by symmetry. Noting that $\wt U_i$ ($1\le i\le d$) are disjoint $K^{-1}$-balls in $G(k,n)$, we have
\begin{equation}\label{boundofU}
    \#\{\wt U_i: 1\le i\le d, \wt U_i\cap \cM_L\neq \emptyset  \}\lesssim K^t\le K^{\tau-2\e}.
\end{equation}
The last inequality is from \eqref{taubigt}.
Returning to \eqref{defBL}, we see that for any $V_i\in \wt U_i$ $(1\le i\le d)$ and $L\in G(l,n)$, there are more than $d-CK^{\tau-2\e}$ many $\wt U_i$ such that $\dim(\pi_L(V_i))\ge m(t,l)$, where $C$ is a constant. Also recall that $d=\lfloor K^{\tau-\e} \rfloor$. As a result, we have
\begin{equation}
\begin{split}
    \BL&\le \sup_{l\in \Z\cap [0,n]}(l-\frac{p}{d} (d-CK^{\tau-2\e})m(t,l))\\
    &\le \sup_{l\in \Z\cap [0,n]}(l-p(1-2CK^{-\e})m(t,l))\\
    &\le \sup_{l\in \Z\cap [0,n]}(l-p\cdot m(t,l))+\e\\
    &=\lambda_p+\e,
\end{split}
\end{equation}
if $K$ is large enough.

Combining \eqref{left} and \eqref{upperbound99}, and noting that there are $\lesssim \de^{-\e}$ terms on the RHS of \eqref{left}, we obtain

\begin{equation}
   \int_{W}\Big(\sum_{V\in E} \sum_{T\in\T_{V}}1_T  \Big)^p\lesssim_{\e,p} \de^{-O(\e)}\de^n \de^{-\lambda_p}(\de^{-\si-\tau})^p.
\end{equation}
Renaming $O(\e)$ to be $\ga$ will give \eqref{discreteineq}.

\section{Lower bounds for \texorpdfstring{$T(a,s)$}{}}\label{section6}

In this section, we prove some lower bounds for $T(a,s)$ (see \eqref{defT}). For each pair $(a,s)$, we will construct an example $A\subset \R^n$ and calculate the dimension of the exceptional set
\[ \dim(\{V\in G(k,n):\dim(\pi_V(A))<s\}). \]
We remark that
in the setting of finite field, examples were built for some specific pairs $(a,s)$ in \cite{bright2023exceptional}.

\bigskip

We first recall the theorem of Ren and Wang \cite{ren2023furstenberg}:
\begin{theorem}\label{conj1}
Let $A\subset \R^2$ with $\dim(A)=a$. For $\theta\in G(1,\R^2)$, let $\pi_\theta: \R^2\rightarrow \theta$ be the orthogonal projection onto line $\theta$. For $0<s<\max\{1,a\}$, define $E_s(A):=\{\theta: \dim(\pi_\theta(A))<s\}$. Then
\begin{equation}\label{exestR2}
    \dim(E_s(A))\le \max\{0,2s-a\}.
\end{equation}
\end{theorem}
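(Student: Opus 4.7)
The plan is to pass, via $\de$-discretization, to the sharp planar Furstenberg-set incidence estimate that Ren and Wang establish in parallel to their proof of the exceptional set bound.

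First I would $\de$-discretize, following the same pigeonholing recipe used for Theorem \ref{mainthm}. Suppose for contradiction $\dim E_s(A) = \tau > 2s - a$ (so $a < 2s$, since the $a \ge 2s$ case reduces to showing $E_s(A)$ is countable). Apply Frostman to $A$ and to $E_s(A)$ to extract, at a single dyadic scale $\de$, a $(\de, a')$-set $A_\de\subset B^2(0,1)$ with $\#A_\de \gtrsim |\log\de|^{-O(1)}\de^{-a'}$ and a $(\de,\tau')$-set $\Theta\subset S^1$ with $\#\Theta\gtrsim |\log\de|^{-O(1)}\de^{-\tau'}$, where $a'<a$, $\tau'<\tau$ are arbitrarily close to $a,\tau$. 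After further pigeonholing to a single scale of cover, for each $\theta\in\Theta$ there is a family $\cT_\theta$ of at most $\de^{-s-\e_0}$ many $\de$-tubes in direction $\theta^\perp$ whose union covers a definite fraction of $A_\de$; one may further assume, by the two-step pigeonholing used in Section \ref{section4}, that a positive density of $x\in A_\de$ lie in at least one $T\in\cT_\theta$ for a positive density of $\theta\in\Theta$.

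Next, count incidences. Set $I := \#\{(x,T) : x\in A_\de,\ T\in\bigcup_\theta\cT_\theta,\ x\in T\}$. The lower bound $I\gtrsim |\log\de|^{-O(1)}\,\#A_\de\cdot\#\Theta \gtrsim \de^{-a'-\tau'+o(1)}$ is immediate from the pigeonholed configuration. The upper bound is where the sharp planar Furstenberg estimate enters: $(\cT_\theta)_{\theta\in\Theta}$ is by construction a $(\de, s, \tau')$-Furstenberg tube family, and Ren--Wang's incidence theorem yields an upper bound of the form $I \lesssim \de^{-O(\e)}\de^{-2s}\cdot(\#\Theta)^{1/2}(\#A_\de)^{1/2}\cdot\de^{-\tau'/2-a'/2}$; combining with the lower bound forces $a'+\tau' \le 2s + O(\e+\e_0)$, and sending $\e,\e_0\to 0$ together with $a'\to a$, $\tau'\to\tau$ contradicts $\tau > 2s-a$.

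The hard part is the sharp planar Furstenberg-set incidence estimate. Ren and Wang prove it through a multi-scale high--low decomposition combined with induction on scales in which the pair $(s,\tau)$ is tracked jointly: at each intermediate scale one extracts a smaller Furstenberg-type subconfiguration and re-applies the inductive hypothesis, with the refined $L^q$-projection theorems of Orponen--Shmerkin serving as the key input controlling the ``low'' part. Setting up this induction so that the error accumulated across $\log(1/\de)$ many scales is only $\de^{-o(1)}$, and making the two-parameter gain genuinely sharp (rather than the one-parameter Kaufman/Falconer/Bourgain versions), is the technical heart of the argument. Once the $\de$-discrete incidence bound is in hand, the passage back to Hausdorff dimension, including the $a\ge 2s$ case where $\max\{0,2s-a\}=0$ simply says $E_s(A)$ has Hausdorff dimension $0$, is a routine limiting argument as $\de\to 0$.
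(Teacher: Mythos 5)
The paper does not prove Theorem \ref{conj1}; it is quoted from Ren and Wang \cite{ren2023furstenberg} and used as a black box in Section \ref{section6}, so there is no internal proof here to compare your proposal against. Your proposal reduces the exceptional-set estimate to a Furstenberg-type $\de$-discretized incidence bound and then attributes that bound to Ren--Wang. Since the Furstenberg estimate and the exceptional-set estimate are proved together in \cite{ren2023furstenberg}, this reduction plus an informal paragraph summarizing their high--low/multi-scale induction amounts to the same citation the paper makes, rather than an independent argument.

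There is also a concrete error in the incidence bound you display. With $\#\Theta\sim\de^{-\tau'}$ and $\#A_\de\sim\de^{-a'}$ one has $(\#\Theta)^{1/2}(\#A_\de)^{1/2}=\de^{-\tau'/2-a'/2}$, so the stated upper bound simplifies to $I\lesssim\de^{-O(\e)}\de^{-2s-\tau'-a'}$. Comparing with the lower bound $I\gtrsim\de^{-a'-\tau'+o(1)}$ then gives only the vacuous $1\lesssim\de^{-2s}$ and no contradiction with $\tau>2s-a$. What one needs, and what does close the argument in the way you describe, is $I\lesssim\de^{-2s-O(\e)}$, or equivalently the Szemer\'edi--Trotter-shaped form $I\lesssim\de^{-O(\e)}(\#A_\de)^{2/3}(\de^{-s}\#\Theta)^{2/3}$; in the sharp configuration of Example \ref{ex1} the true incidence count is $\sim N^{2s}$, whereas your formula evaluates to $N^{4s}$. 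The conclusion $a'+\tau'\le 2s+O(\e+\e_0)$ you draw is the correct one, but it follows only from the corrected bound, not from the displayed one (it appears the last $\de$-factor should carry exponent $+\tau'/2+a'/2$, cancelling the preceding cardinality factors).
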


For our purpose, we are interested in the sharp example $A\subset \R^2$ for which \eqref{exestR2} holds with ``$\le$" replaced by ``$=$".
Actually, there exists $\bdA_{s,a}\subset \R^2$ with $\dim(\bdA_{s,a})=a$ such that $\bdE_{s,a}:=E_s(\bdA_{s,a})$ has Hausdorff dimension $\ge 2s-a$. For this example, see Section 5.4 in \cite{mattila2015fourier}. We would like to formally state it as a proposition.
\begin{proposition}\label{bdAE}
    For $0<a<2$, $\frac{a}{2}\le s<\min\{1,a\}$. There exists a set $\bdA_{s,a}\subset \R^2$ with $\dim(\bdA_{s,a})=a$ such that
    \begin{equation}
        \bdE_{s,a}:=\{\theta\in G(1,2): \dim(\pi_\theta(\bdA_{s,a}))<s\}
    \end{equation}
has Hausdorff dimension $2s-a$.
\end{proposition}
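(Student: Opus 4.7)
The plan is to construct $\bdA_{s,a}$ explicitly via a Cantor-type construction, following the classical Kaufman--Mattila approach outlined in Section 5.4 of \cite{mattila2015fourier}. Set $t := 2s - a$; the hypotheses give $0 \le t < s < 1$, and it suffices to exhibit $\bdA_{s,a} \subset [0,1]^2$ with $\dim \bdA_{s,a}=a$ whose exceptional set has dimension at least $t$, since the matching upper bound $\dim \bdE_{s,a} \le 2s-a$ is already Theorem \ref{conj1}.

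First, I would fix a compact Cantor-type subset $F \subset G(1,2)$ of Hausdorff dimension exactly $t$ and aim to arrange $F \subset \bdE_{s,a}$. Second, I would build $\bdA_{s,a} = \bigcap_{n \ge 1} A_n$ as a nested Cantor family, where each $A_n$ is a union of $\sim \delta_n^{-a}$ dyadic squares of side $\delta_n = 2^{-n}$, chosen so that for every $\theta \in F$ (or every $\theta$ in a $\delta_n$-net of $F$), the projection $\pi_\theta(A_n)$ is covered by $\sim \delta_n^{-s}$ intervals of length $\delta_n$. A quick numerological check shows this forces each $\theta$-tube of width $\delta_n$ through $A_n$ to carry on average $\delta_n^{s-a}$ squares, and the packability condition $s - a \ge -1$ is automatic from $s<1$ and $a<2$. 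Once $A_n$ is built, $\dim \bdA_{s,a} = a$ follows from a standard Frostman/mass-distribution argument (putting uniform mass $\delta_n^{a}$ on each selected square), while $\dim \pi_\theta(\bdA_{s,a}) \le s$ for each $\theta \in F$ is immediate from the covering. To upgrade to the strict inequality $<s$ needed for $F \subset \bdE_{s,a}$, I would slightly tighten the construction, demanding $\sim\delta_n^{-s+\eta_n}$ intervals at scale $n$ for a sequence $\eta_n \downarrow 0$ chosen slowly enough not to spoil the dimension of $\bdA_{s,a}$.

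The main obstacle is the simultaneous realizability of the projection constraints for a $t$-dimensional continuum of directions in $F$. I expect this to be handled by a random refinement: at each step, select $A_{n+1}\subset A_n$ according to a carefully designed distribution that is approximately uniform among configurations meeting the single-direction constraint; then bound the probability of failure for a fixed $\theta$ via concentration, and union-bound over a $\delta_n$-net of $F$ of cardinality $\sim \delta_n^{-t}$. The delicate part is balancing the branching factor (needed to ensure $\dim \bdA_{s,a}=a$ via a Frostman lower bound on the limiting measure) against the decay of $\eta_n$ (needed to make the union bound over the direction net survive), so that both survive simultaneously. An alternative I would also consider is to take $F$ itself to be highly self-similar, so that an explicit self-similar construction synchronized with $F$ replaces the probabilistic argument entirely.
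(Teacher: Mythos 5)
The paper does not actually prove this proposition; it defers entirely to Section 5.4 of Mattila's book \cite{mattila2015fourier}, which is exactly the source your plan invokes, so the overall strategy coincides with the paper's. Only the lower bound $\dim(\bdE_{s,a})\ge 2s-a$ is used in the paper (in the proof of Proposition \ref{highprop}); the matching upper bound, as you correctly note, comes from Theorem \ref{conj1}.

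That said, your sketch is not a self-contained argument: the central difficulty --- simultaneously enforcing the projection-covering constraints over a $t$-dimensional continuum of directions in $F$ while still keeping enough branching to force $\dim\bdA_{s,a}=a$ --- is explicitly left open, with two competing ideas (a probabilistic refinement with a union bound over a net of $F$, or a self-similar construction synchronized with a self-similar $F$) merely floated. Either could in principle be made to work, but this synchronization is precisely the nontrivial content of the Kaufman--Mattila example, and it is not carried out here; since the paper cites rather than reconstructs, the honest route in this context is simply the same citation. One small error in the numerology: the claim that $s-a\ge -1$ is ``automatic from $s<1$ and $a<2$'' is false in that generality (e.g.\ $a=1.9$, $s=0.5$ gives $s-a=-1.4$); the correct derivation uses the hypothesis $s\ge a/2$, which gives $s-a\ge s-2s=-s>-1$.
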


The sets $\bdA_{s,a}$ and $\bdE_{s,a}$ will serve as building blocks when we construct examples for all $n$.
We would like to spend some time explaining a construction of $\bdA_{s,a}$ and $\bdE_{s,a}$ in the finite setting. It is exactly the sharp example of the Szemer\'edi-Trotter theorem.

\begin{example}\label{ex1}
{\rm
Let $N$ be a large number. We assume $2s\ge a$.  Consider a set of lines
$\cL=\{l_{k,m}: |k|\le N^{2s-a}, |m|\le 10 N^s\}$ in $\R^2$. Here, $k$ and $m$ are integers and $l_{k,m}$ is given by
\[ l_{k.m}: y=kx+m. \]
We see that $\cL$ consists of lines from $\sim N^{2s-a}$ many directions, and in each of these directions there are $\sim N^s$ many lines. We denote these directions by $E$, and for $\theta\in E$, let $\cL_\theta$ be the lines in $\cL$ that are in direction $\theta$.

Consider the set $A:=\{(x,y)\in \Z^2: |x|\le N^{a-s}, |y|\le N^s\}$. For any $|k|\le N^{2s-a}$, we see that any $(x,y)\in A$ satisfies $|y-kx|\le 10N^s$. This means that for any direction $\theta\in E$, $A$ is covered by $\cL_\theta$. Therefore we have for each $\theta\in E$,
\[ \#\pi_\theta(A)\le \#\cL_\theta\lesssim N^s. \] We obtain the following estimate:
\[ \#\{\theta: \#\pi_\theta(A)\lesssim N^s\}\ge \#E\sim N^{2s-a}. \]
Such $A$ and $E$ are like the discrete analogue of $\bdA_{s,a}$ and $\bdE_{s,a}$.
}
\end{example}

\medskip

\subsection{Examples in \texorpdfstring{$\R^3$}{}}
To give more insight, we first focus on the projection to lines and planes in $\R^3$. In other words, we consider the case $n=3$, $k=1$ or $2$.

\begin{example}[Projections to lines in $\R^3$]\hfill
{\rm
\begin{enumerate}
    \item When $a\le 1$, let $A$ be contained in the $x_3$-axis. We see that for any line $\ell$ that is parallel to the $(x_1,x_2)$-plane, the projection of $A$ onto $\ell$ has dimension $0(<s)$. Therefore $\dim(E_s(A))\ge 1$.

    \item Consider the case $1<a\le 2$. 
    
    $\bullet$ When $\frac{a-1}{2}<s< a-1$, we choose $A=\bdA_{s,a-1}\times \R$. This looks like the left picture in Figure \ref{projectiontoplane}. We only look at those directions $\theta$ that are parallel to the $(x_1,x_2)$-plane. This is just the problem in $\R^2$. We see that $E_s(A)\supset \bE_{s,a-1}$, so
    \[ \dim(E_s(A))\ge \max\{0,2s-(a-1)\}=\max\{0,1+2s-a\}. \]

    $\bullet$ When $a-1<s<1$, we choose $A=A'\times \R$, where $A'\subset \R^2$ is any set with $\dim(A')=a-1$.
    We see that if $\theta$ is parallel to the $(x_1,x_2)$-plane, then $\pi_\theta(A)=\pi_\theta(A')$ has dimension less than $\dim(A')=a-1<s$. Therefore,
    \[\dim(E_s(A))\ge 1.\]
    
    \item Consider the case $2<a\le 3$. When $\frac{a-1}{2}<s<1$, we choose $A=\bdA_{s,a-1}\times \R$ (then $\dim(A)=\dim(\bdA_{s,a-1})+\dim(\R)=a$). We only look at those directions $\theta$ that are parallel to the $(x_1,x_2)$-plane. This is just the problem in $\R^2$. We see that $E_s(A)\supset \bdE_{s,a-1}$, so
    \[\dim(E_s(A))\ge \max\{0,2s-(a-1)\}=\max\{0,1+2s-a\}.\]
\end{enumerate}
}
\end{example}

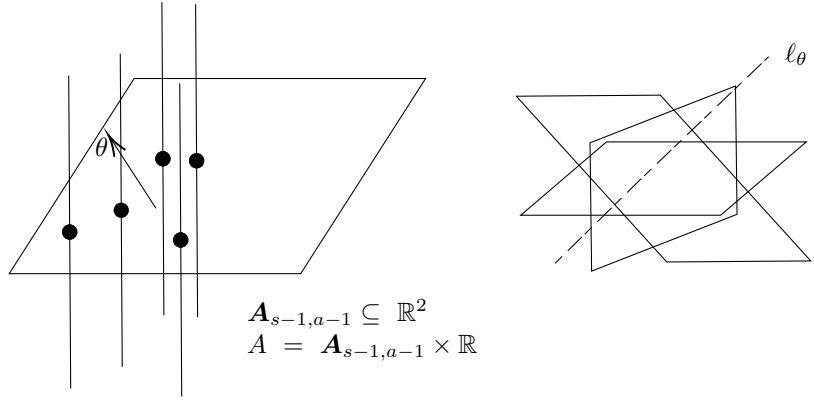
\begin{figure}
\begin{tikzpicture}[x=0.75pt,y=0.75pt,yscale=-1,xscale=1]
%uncomment if require: \path (0,300); %set diagram left start at 0, and has height of 300

%Shape: Parallelogram [id:dp0052352796552472824] 
\draw  [color={rgb, 255:red, 0; green, 0; blue, 0 }  ,draw opacity=1 ] (163,71.5) -- (310,71.5) -- (247,170) -- (100,170) -- cycle ;
%Straight Lines [id:da7942099118913704] 
\draw [color={rgb, 255:red, 0; green, 0; blue, 0 }  ,draw opacity=1 ]   (130,70) -- (131,228) ;
%Straight Lines [id:da34485573175682616] 
\draw [color={rgb, 255:red, 0; green, 0; blue, 0 }  ,draw opacity=1 ]   (156,59) -- (157,217) ;
%Straight Lines [id:da051332002117250086] 
\draw [color={rgb, 255:red, 0; green, 0; blue, 0 }  ,draw opacity=1 ]   (177,33) -- (178,191) ;
%Straight Lines [id:da871872591763849] 
\draw [color={rgb, 255:red, 0; green, 0; blue, 0 }  ,draw opacity=1 ]   (194,34) -- (195,192) ;
%Straight Lines [id:da3018150909975321] 
\draw [color={rgb, 255:red, 0; green, 0; blue, 0 }  ,draw opacity=1 ]   (186,74) -- (187,232) ;
%Straight Lines [id:da40826540298597647] 
\draw [color={rgb, 255:red, 0; green, 0; blue, 0 }  ,draw opacity=1 ] (174,137) -- (150,100);

% \draw [shift={(241,128)}, rotate = 172.35] [color={rgb, 255:red, 0; green, 0; blue, 0 }  ,draw opacity=1 ][line width=0.75]    (10.93,-3.29) .. controls (6.95,-1.4) and (3.31,-0.3) .. (0,0) .. controls (3.31,0.3) and (6.95,1.4) .. (10.93,3.29)   ;

\draw [shift={(150,100)}, rotate = 58] [color={rgb, 255:red, 0; green, 0; blue, 0 }  ,draw opacity=1 ][line width=0.75]    (10.93,-3.29) .. controls (6.95,-1.4) and (3.31,-0.3) .. (0,0) .. controls (3.31,0.3) and (6.95,1.4) .. (10.93,3.29)   ;

%Straight Lines [id:da43147709161021375] 
\draw [color={rgb, 255:red, 0; green, 0; blue, 0 }  ,draw opacity=1 ]   (130.5,149) ;
\draw [shift={(130.5,149)}, rotate = 0] [color={rgb, 255:red, 0; green, 0; blue, 0 }  ,draw opacity=1 ][fill={rgb, 255:red, 0; green, 0; blue, 0 }  ,fill opacity=1 ][line width=0.75]      (0, 0) circle [x radius= 3.35, y radius= 3.35]   ;
\draw [shift={(130.5,149)}, rotate = 0] [color={rgb, 255:red, 0; green, 0; blue, 0 }  ,draw opacity=1 ][fill={rgb, 255:red, 0; green, 0; blue, 0 }  ,fill opacity=1 ][line width=0.75]      (0, 0) circle [x radius= 3.35, y radius= 3.35]   ;
%Straight Lines [id:da5567651658136805] 
\draw [color={rgb, 255:red, 0; green, 0; blue, 0 }  ,draw opacity=1 ]   (156.5,138) ;
\draw [shift={(156.5,138)}, rotate = 0] [color={rgb, 255:red, 0; green, 0; blue, 0 }  ,draw opacity=1 ][fill={rgb, 255:red, 0; green, 0; blue, 0 }  ,fill opacity=1 ][line width=0.75]      (0, 0) circle [x radius= 3.35, y radius= 3.35]   ;
 
\draw [color={rgb, 255:red, 0; green, 0; blue, 0 }  ,draw opacity=1 ]   (177.5,112) ;
\draw [shift={(177.5,112)}, rotate = 0] [color={rgb, 255:red, 0; green, 0; blue, 0 }  ,draw opacity=1 ][fill={rgb, 255:red, 0; green, 0; blue, 0 }  ,fill opacity=1 ][line width=0.75]      (0, 0) circle [x radius= 3.35, y radius= 3.35]   ;
\draw [shift={(177.5,112)}, rotate = 0] [color={rgb, 255:red, 0; green, 0; blue, 0 }  ,draw opacity=1 ][fill={rgb, 255:red, 0; green, 0; blue, 0 }  ,fill opacity=1 ][line width=0.75]      (0, 0) circle [x radius= 3.35, y radius= 3.35]   ;
%Straight Lines [id:da7466854730127388] 
\draw [color={rgb, 255:red, 0; green, 0; blue, 0 }  ,draw opacity=1 ]   (194.5,113) ;
\draw [shift={(194.5,113)}, rotate = 0] [color={rgb, 255:red, 0; green, 0; blue, 0 }  ,draw opacity=1 ][fill={rgb, 255:red, 0; green, 0; blue, 0 }  ,fill opacity=1 ][line width=0.75]      (0, 0) circle [x radius= 3.35, y radius= 3.35]   ;
\draw [shift={(194.5,113)}, rotate = 0] [color={rgb, 255:red, 0; green, 0; blue, 0 }  ,draw opacity=1 ][fill={rgb, 255:red, 0; green, 0; blue, 0 }  ,fill opacity=1 ][line width=0.75]      (0, 0) circle [x radius= 3.35, y radius= 3.35]   ;
%Straight Lines [id:da15009428697497573] 
\draw [color={rgb, 255:red, 0; green, 0; blue, 0 }  ,draw opacity=1 ]   (186.5,153) ;
\draw [shift={(186.5,153)}, rotate = 0] [color={rgb, 255:red, 0; green, 0; blue, 0 }  ,draw opacity=1 ][fill={rgb, 255:red, 0; green, 0; blue, 0 }  ,fill opacity=1 ][line width=0.75]      (0, 0) circle [x radius= 3.35, y radius= 3.35]   ;
\draw [shift={(186.5,153)}, rotate = 0] [color={rgb, 255:red, 0; green, 0; blue, 0 }  ,draw opacity=1 ][fill={rgb, 255:red, 0; green, 0; blue, 0 }  ,fill opacity=1 ][line width=0.75]      (0, 0) circle [x radius= 3.35, y radius= 3.35]   ;
%Shape: Parallelogram [id:dp43024367487247206] 
\draw   (392.93,103.95) -- (393.72,168.71) -- (467.07,140.05) -- (466.28,75.29) -- cycle ;
%Shape: Parallelogram [id:dp22581504330104285] 
\draw   (458.8,140.5) -- (358,140.5) -- (401.2,103.5) -- (502,103.5) -- cycle ;
%Shape: Parallelogram [id:dp01826001029486002] 
\draw   (431.61,164.1) -- (355.74,80.43) -- (428.39,79.9) -- (504.26,163.57) -- cycle ;
%Straight Lines [id:da9103107403593507] 
\draw  [dash pattern={on 3.75pt off 3pt on 7.5pt off 1.5pt}]  (375.28,164.79) -- (418.12,122.66) -- (483,60.5) ;

% Text Node
\draw (212,181.4) node [anchor=north west][inner sep=0.75pt]    {$ \begin{array}{l}
\bdA_{s-1,a-1}\subseteq \ \mathbb{R}^{2}\\
A\ =\ \bdA_{s-1,a-1}\times \mathbb{R}
\end{array}$};
% Text Node
\draw (142,100) node [anchor=north west][inner sep=0.75pt]    {$\theta $};
% Text Node
\draw (490,52.4) node [anchor=north west][inner sep=0.75pt]    {$\ell_\theta$};
\end{tikzpicture}

\caption{Projection to planes}
\label{projectiontoplane}
\end{figure}

\begin{example}[Projections to planes in $\R^3$]\hfill
{\rm
    \begin{enumerate}
    \item When $a\le 1$, let $A=\bdA_{s,a}\times \{0\}$. For $\theta\in S^2$, we use $V_\theta$ to denote the plane orthogonal to $\theta$. 
    We only look at the projection of $A$ to those $V_\theta$ where $\theta$ is parallel to the $(x_1,x_2)$-plane. This exactly becomes a projection-to-line problem in $\R^2$, so
    we have that $\dim(E_s(A))\ge \dim(\bdE_{s,a})\ge \max\{0,2s-a\}$.

    \item Consider the case $1<a\le 2$. 
    
    $\bullet$ When $s\le 1$, we use the same example as in the last paragraph.

    $\bullet$ When $1<s\le \frac{a+1}{2}$, we choose $A$ to be a subset of $\R^2\times \{0\}$. Then for those $\theta$ parallel to $(x_1,x_2)$-axis, the projection of $A$ onto $V_\theta$ is contained in the line $(\R^2\times \{0\})\cap V_\theta$ which has dimension $\le 1<s$. This means that these $V_\theta$ are in the exceptional set. So, $\dim(E_s(A))\ge 1$.

    $\bullet$ When $\frac{a+1}{2}<s<a$, we choose $A=\bdA_{s-1,a-1}\times \R$ (see Figure \ref{projectiontoplane}). For a direction $\theta$ that is parallel to the $(x_1,x_2)$-plane, we use $\ell_\theta$ to denote the line contained in $\R^2\times \{0\}$ that is orthogonal to $\theta$, i.e., $\ell_\theta=V_\theta\cap (\R^2\times \{0\})$. Suppose $\ell_\theta$ is in $\bdE_{s-1,a-1}$, i.e., $\dim(\pi_{\ell_\theta}(\bdA_{s-1,a-1}))<s-1$. Then noting that $\pi_{V_\theta}(\bdA_{s-1,a-1}\times \R)=\pi_{\ell_\theta}(\bdA_{s-1,a-1})\times \R$, we have $\dim(\pi_{V_\theta}(A))=\dim(\pi_{\ell_\theta}(\bdA_{s-1,a-1}))+1<s$. On the right hand side of Figure \ref{projectiontoplane}, we draw a bunch of planes that contain $\ell_\theta$ but not contain the $x_3$-axis. We can view these planes as 
    \begin{equation}\label{view}
        \R\P^1\setminus \{*\},
    \end{equation} where $*$ corresponds to the $(x_1,x_2)$-plane. The geometric observation is that the projection of $A$ onto these planes are the same, and hence has dimension $<s$. We see that 
    \begin{align*}
        \dim(E_s(A))&\ge \dim(\bdE_{s-1,a-1}\times (\R\P^1\setminus\{*\}))\\
        &=\dim(\bdE_{s-1,a-1})+\dim (\R\P^1\setminus\{*\})\\
        &\ge 2(s-1)-(a-1)+1=2s-a.
    \end{align*}
    We used that $\dim(\bdE_{s-1,a-1})\ge\max\{0,2(s-1)-(a-1)\}=2(s-1)-(a-1)$ since $\frac{a+1}{2}<s$.

    \item Consider the case $2<a\le 3$.
    
    $\bullet$ When $a-1<s<\frac{a+1}{2}$, we choose $A$ to be contained in $\R^2\times I$, where $I\subset \R$ has dimension $a-2$. Then for those $\theta$ parallel to $(x_1,x_2)$-axis, the projection of $A$ onto $V_\theta$ is contained in $\ell_\theta\times I$ which has dimension $\le 1+a-2=a-1<s$. This means that these $V_\theta$ are in the exceptional set. So, $\dim(E_s(A))\ge 1$. 
    
    $\bullet$ When $\frac{a+1}{2}<s<2$, we just use the same example $A=\bdA_{s-1,a-1}\times \R$ as above to get the lower bound $\dim(E_s(A))\ge 2s-a$.
\end{enumerate}
}
\end{example}

We summarize our examples as follows.

\begin{proposition}[Projections to lines in $\R^3$]\label{prop1}
Consider the projection to lines in $\R^3$, so $E_s(A)$ is given by \eqref{exset} with $k=1, n=3$. We have
\begin{enumerate}
    \item When $a\le 1:$ $T(a,s)\ge 1$;
    \item When $1<a\le 2:$ 
    \begin{equation*}
        T(a,s)\ge\begin{cases}
            0 & s\le \frac{a-1}{2}\\
            1+2s-a & \frac{a-1}{2}<s\le a-1\\
            1 & a-1<s<1
        \end{cases}
    \end{equation*}
    \item When $2<a\le 3:$ 
    \begin{equation*}
        T(a,s)\ge\begin{cases}
            0 & s\le \frac{a-1}{2}\\
            1+2s-a & \frac{a-1}{2}<s<1
        \end{cases}
    \end{equation*}
\end{enumerate}
\end{proposition}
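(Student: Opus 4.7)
The plan is to record the constructions already carried out in the three preceding examples and assemble them into a single proof. Each lower bound will be witnessed by an explicit Borel set $A \subset \R^3$ with $\dim A = a$ whose exceptional set $E_s(A)$ in $G(1,3)$ has the required dimension; the single nontrivial building block is the pair $(\bdA_{s,\cdot}, \bdE_{s,\cdot})$ supplied by Proposition \ref{bdAE}. Throughout, I identify $G(1,2)$ with the $1$-dimensional submanifold of $G(1,3)$ of lines lying in the $(x_1,x_2)$-plane, so that projecting onto such $\theta$ only sees the first two coordinates of points of $A$.

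For part (1), $a \le 1$, I take $A$ to be a Borel subset of the $x_3$-axis with $\dim A = a$. Then for every $\ell \in G(1,2)$ we have $\pi_\ell(A) = \{0\}$, so $E_s(A) \supset G(1,2)$ and $T(a,s) \ge 1$. For part (2), $1 < a \le 2$, I split on $s$. In the middle subrange $(a-1)/2 < s \le a-1$, set $A = \bdA_{s,a-1} \times \R$; the product dimension formula $\dim(X \times \R) = \dim X + 1$ gives $\dim A = a$, and for any $\theta \in G(1,2) \subset G(1,3)$ one has $\pi_\theta(A) = \pi_\theta(\bdA_{s,a-1})$, so every $\theta$ in the embedded copy of $\bdE_{s,a-1}$ lies in $E_s(A)$. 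Proposition \ref{bdAE} then yields
\[
T(a,s) \;\ge\; \dim \bdE_{s,a-1} \;\ge\; 2s-(a-1) \;=\; 1+2s-a.
\]
In the upper subrange $a-1 < s < 1$, I instead take $A = A' \times \R$ for any Borel $A' \subset \R^2$ with $\dim A' = a-1$; then for every $\theta \in G(1,2)$, $\dim \pi_\theta(A) = \dim \pi_\theta(A') \le a-1 < s$, whence $E_s(A) \supset G(1,2)$ and $T(a,s) \ge 1$.

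For part (3), $2 < a \le 3$ and $(a-1)/2 < s < 1$, the same product set $A = \bdA_{s,a-1} \times \R$ works. Here $a-1 \in (1,2]$ and $s < 1 \le a-1$, so the hypotheses $\tfrac{a-1}{2} \le s < \min\{1,a-1\}$ of Proposition \ref{bdAE} applied with parameters $(s,a-1)$ are satisfied, and the argument of the previous paragraph gives $T(a,s) \ge 1+2s-a$. The subranges where the asserted lower bound is $0$ require no construction.

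There is no serious obstacle: the argument is essentially a bookkeeping exercise on top of Proposition \ref{bdAE}. The only steps that deserve a brief verification are (i) that the parameter pair passed to Proposition \ref{bdAE} lies in its admissible range, which is what dictates the case split on $s$ in parts (2) and (3), and (ii) the identity $\dim(\bdA_{s,a-1} \times \R) = a$, which follows from the standard inequalities $\dim X + \dim Y \le \dim(X \times Y) \le \dim X + \overline{\dim}_B Y$ together with $\overline{\dim}_B \R = 1$. The mildly delicate point is recognizing why the lower-dimensional construction $\bdA_{s,a-1}$ continues to produce a $2s-a+1$-dimensional exceptional set of lines in $\R^3$: the embedding $G(1,2) \hookrightarrow G(1,3)$ is bi-Lipschitz onto its image, so Hausdorff dimension is preserved under this inclusion and no additional exceptional-set estimate in $G(1,3)$ is needed.
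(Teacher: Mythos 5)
Your constructions are exactly the ones the paper uses (a subset of the $x_3$-axis for $a\le1$; the products $\bdA_{s,a-1}\times\R$ and $A'\times\R$ for the two nontrivial subranges of $a$), with the same reduction to the planar Proposition~\ref{bdAE} via the embedding $G(1,2)\hookrightarrow G(1,3)$. The proposal is correct and follows essentially the same route as the paper, merely spelling out the bi-Lipschitz-embedding and product-dimension justifications that the paper leaves implicit.
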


\begin{proposition}[Projections to planes in $\R^3$]\label{prop2}
Consider the projection to planes in $\R^3$, so $E_s(A)$ is given by \eqref{exset} with $k=2, n=3$. We have
\begin{enumerate}
    \item When $a\le 1:$ $T(a,s)\ge \max\{0,2s-a\}$;
    \item When $1<a\le 2:$ 
    \begin{equation*}
        T(a,s)\ge\begin{cases}
            0 & s\le \frac{a}{2}\\
            2s-a & \frac{a}{2}<s\le 1\\
            1 & 1<s\le\frac{a+1}{2}\\
            2s-a & \frac{a+1}{2}<s<a
        \end{cases}
    \end{equation*}
    \item When $2<a\le 3:$ 
    \begin{equation*}
        T(a,s)\ge\begin{cases}
            0 & s\le a-1\\
            1 & a-1<s\le \frac{a+1}{2}\\
            2s-a & \frac{a+1}{2}<s<2
        \end{cases}
    \end{equation*}
\end{enumerate}
\end{proposition}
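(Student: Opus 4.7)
The plan is to treat each subcase of Proposition \ref{prop2} by exhibiting an explicit Borel set $A\subset\R^3$ with $\dim A = a$ and lower-bounding $\dim E_s(A)$; the building block throughout is the sharp planar pair $(\bdA_{s',a'},\bdE_{s',a'})$ from Proposition \ref{bdAE}. The constructions fall into three families, distinguished by how $A$ is positioned relative to the coordinate plane $\R^2\times\{0\}$ and the $x_3$-axis.

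The first family handles the flat subranges: $a\le 1$ in case (1) and the $s\le 1$ portions of cases (2) and (3). Here one embeds $A = \bdA_{s,a}\times\{0\}$ (or, for $a$ slightly less than $2$, a planar copy of dimension $a$) inside $\R^2\times\{0\}$. For every $\theta\in S^2$ parallel to the $(x_1,x_2)$-plane, the plane $V_\theta = \theta^\perp$ meets $\R^2\times\{0\}$ in a line $\ell_\theta$, and $\pi_{V_\theta}|_{\R^2\times\{0\}}$ is an isometric copy of $\pi_{\ell_\theta}$. Hence $E_s(A)$ contains a copy of $\bdE_{s,a}$, giving $\dim E_s(A)\ge\max\{0,2s-a\}$. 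The second family covers the subranges where the proposition claims only $T(a,s)\ge 1$: place $A$ inside $\R^2\times I$ with $\dim I=\max\{0,a-2\}$, so that for every $\theta$ in the equatorial circle $\pi_{V_\theta}(A)\subset\ell_\theta\times I$ has dimension at most $1+\max\{0,a-2\}<s$, making the whole equatorial circle of planes exceptional.

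The decisive construction, which yields the $2s-a$ bound in the ranges $(a+1)/2<s<\min\{a,2\}$ of cases (2) and (3), is the \emph{fibered} example $A = \bdA_{s-1,a-1}\times\R$, which has dimension $(a-1)+1=a$. For $\theta$ in the equatorial circle, the identity
\[
\pi_{V_\theta}(\bdA_{s-1,a-1}\times\R) \;=\; \pi_{\ell_\theta}(\bdA_{s-1,a-1})\times\R
\]
yields $\dim\pi_{V_\theta}(A) = \dim\pi_{\ell_\theta}(\bdA_{s-1,a-1})+1$, so $\ell_\theta\in\bdE_{s-1,a-1}$ forces $V_\theta\in E_s(A)$. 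Crucially, for each such $\ell_\theta$, every plane containing $\ell_\theta$ other than the one passing through the $x_3$-axis produces the same projection of $A$ up to translation along the $\R$-fiber; this contributes a one-parameter family $\R\P^1\setminus\{\ast\}$ of exceptional planes for each exceptional $\ell_\theta$. Adding Hausdorff dimensions gives
\[
\dim E_s(A)\;\ge\;\dim\bdE_{s-1,a-1} + 1\;\ge\; 2(s-1)-(a-1)+1 \;=\; 2s-a.
\]

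The main obstacle is justifying that the fiber of planes through a fixed line $\ell_\theta$, parametrized by $\R\P^1\setminus\{\ast\}$, contributes an honest independent dimension-$1$ factor in $G(2,3)$, so that the two Hausdorff dimensions genuinely add in the fibered construction; this is a local product argument using the smooth parametrization of $G(2,3)$ on the open piece of planes that do not contain the $x_3$-axis. A secondary bookkeeping step is matching Proposition \ref{bdAE}'s hypothesis $(a-1)/2\le s-1<\min\{1,a-1\}$ to the ranges in cases (2) and (3), which simplifies to $(a+1)/2\le s<\min\{a,2\}$ --- exactly the ranges appearing in the statement (up to boundary points, which can be handled by taking limits over slightly smaller $s$).
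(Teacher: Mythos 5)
Your proposal reproduces the paper's own constructions: $A=\bdA_{s,a}\times\{0\}$ for the flat ranges, $A\subset\R^2\times I$ with $\dim I=\max\{0,a-2\}$ for the subranges claiming $T(a,s)\ge 1$, and the fibered set $A=\bdA_{s-1,a-1}\times\R$ for the ranges $\frac{a+1}{2}<s<\min\{a,2\}$, with the same dimension arithmetic. One clarification on the fibered case: the point you should delete from each pencil $\R\P^1$ of planes through $\ell_\theta$ is the $(x_1,x_2)$-plane (the unique plane common to \emph{every} pencil, so deleting it makes the pencils pairwise disjoint), not the plane $V_\theta$ through the $x_3$-axis; the paper carries out this disjointness carefully in the general Proposition~\ref{highprop} (Type~3) via the sets $\cU_\theta=\cV_\theta\setminus\cW_\theta$, and removing a single point in any case does not change the dimension count.
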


\begin{remark}\label{rm5}
    {\rm
    To construct analogous examples in higher dimensions, we can play the same trick. For example, choose $A=\bdA_{s',a'}\times \R^l$, or $I\times \R^{l'}$. Then we may find the lower bound of $T(a,s)$ are of forms $m+2s-a$, or $m'$, for some integers $m,m'$. We will talk about the examples in higher dimensions in the next subsection.
    }
\end{remark}

\subsection{Examples in higher dimensions}
We discuss the examples of the exceptional sets in higher dimensions. There will be four types of the examples. We will use $d(m,n)$ to denote $\dim(G(m,n))$, namely $d(m,n)=m(n-m)$. We first list some well known facts.

\begin{lemma}\label{easylem}
Suppose $V,W\subset \R^n$ are two subspaces. Then 
\[\dim(\pi_V(W))=\dim(\pi_W(V)),\] 
and \[\dim(\pi_V(W))=\dim(W)-\dim(W\cap V^\perp).\]
\end{lemma}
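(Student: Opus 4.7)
The plan is to treat this as a pure linear algebra statement (the Hausdorff dimension of a subspace equals its vector space dimension), and prove the two equalities by the rank-nullity theorem together with the standard identity $\dim(U) + \dim(U^\perp) = n$.

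First I would establish the second equality by considering the restricted linear map $\pi_V|_W \colon W \to V$. Its kernel consists of vectors $w \in W$ with $\pi_V(w) = 0$, equivalently $w \in V^\perp$, so $\ker(\pi_V|_W) = W \cap V^\perp$. Rank-nullity then gives
\[
\dim(\pi_V(W)) = \dim(W) - \dim(W \cap V^\perp),
\]
which is the second claim.

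Next, I would deduce the first equality (symmetry $\dim(\pi_V(W)) = \dim(\pi_W(V))$) from the second, using the codimension identity. Applying the second formula with the roles of $V$ and $W$ swapped gives
\[
\dim(\pi_W(V)) = \dim(V) - \dim(V \cap W^\perp),
\]
so it suffices to check
\[
\dim(W) - \dim(W \cap V^\perp) = \dim(V) - \dim(V \cap W^\perp).
\]
This follows by taking orthogonal complements: from $\dim(V + W^\perp) = n - \dim(V^\perp \cap W)$ combined with the inclusion-exclusion formula $\dim(V + W^\perp) = \dim(V) + \dim(W^\perp) - \dim(V \cap W^\perp)$, one rearranges to get exactly the claimed identity. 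An even cleaner alternative is to observe that $\pi_V|_W$ and $\pi_W|_V$ are adjoint to each other with respect to the standard inner product (since $\langle \pi_V w, v\rangle = \langle w, v\rangle = \langle w, \pi_W v\rangle$ for $v \in V$, $w \in W$), so they have the same rank.

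There is no real obstacle here: the statement is elementary linear algebra, and the only thing to be slightly careful about is using consistent dimension conventions (vector space dimension agrees with Hausdorff dimension on linear subspaces, which justifies the use of $\dim$ on both sides). I would present the two sentences of the proof in the order above, mentioning the adjoint viewpoint as a one-line alternative for the symmetry statement.
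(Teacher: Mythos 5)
Your proof is correct and follows essentially the same route as the paper: rank--nullity applied to $\pi_V|_W$ gives the second identity, and the symmetry statement is then reduced to a dimension count using orthogonal complements and inclusion--exclusion, exactly as in the paper's chain of equalities. Your alternative observation that $\pi_V|_W$ and $\pi_W|_V$ are mutually adjoint (hence have equal rank) is a genuinely cleaner one-line argument for the first equality than what the paper gives, and is worth keeping.
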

\begin{proof}
Since the linear map $\pi_V: W\rightarrow V$ has kernel $W\cap V^\perp$, we have
\[\dim(\pi_V(W))=\dim(W)-\dim(W\cap V^\perp).\]
To prove the first equality, we note
\begin{align*}
  \dim(\pi_V(W))&=\dim(W)-\dim(W\cap V^\perp)\\
  &=\dim W -\bigg(n-\dim\big((W\cap V^\perp)^\perp\big)\bigg)\\
  &=\dim W-n +\dim(W^\perp+V)\\
  &=\dim W-n+ \dim W^\perp+ \dim V-\dim(W^\perp\cap V)\\
  &=\dim V-\dim(V\cap W^\perp)\\
  &=\dim(\pi_W(V)).
\end{align*}
\end{proof}

\begin{lemma}\label{easylem2}
Let $0\le l\le n$, $1\le k,m\le n$ be integers satisfying $n-k\ge m-l$ and $l\le k,m$. If $W\in G(m,n)$, then
\[ \dim\{ V\in G(k,n):\dim(\pi_V(W))\le l \}=d(k-l,n-m)+d(l,n-(k-l)). \]
\end{lemma}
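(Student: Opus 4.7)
The plan is to translate the condition on $\dim \pi_V(W)$ into a statement about how $V$ intersects $W^\perp$, then compute the dimension of the resulting Schubert-type set via an incidence variety. By Lemma \ref{easylem},
\[ \dim \pi_V(W) = \dim V - \dim(V\cap W^\perp) = k - \dim(V\cap W^\perp), \]
so with $j := k - l$ the set in question equals
\[ X := \{V \in G(k,n) : \dim(V\cap W^\perp) \ge j\}. \]
The hypothesis $n-k \ge m-l$ is precisely $j \le n-m = \dim W^\perp$, and $0 \le l \le \min\{k,m\}$ gives $0 \le j \le \min\{k, n-m\}$, so $X$ is a nonempty subvariety of $G(k,n)$.

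To compute $\dim X$, I would introduce the incidence variety
\[ Y := \{(V,V_1) \in G(k,n) \times G(j, W^\perp) : V_1 \subset V\} \]
and analyze both projections. The projection $Y \to G(j, W^\perp)$ has base of dimension $d(j, n-m) = d(k-l, n-m)$, and the fiber over $V_1$ is $\{V \in G(k,n) : V \supset V_1\}$, diffeomorphic to $G(k-j, n-j) = G(l, n-(k-l))$ of dimension $d(l, n-(k-l))$. Hence
\[ \dim Y = d(k-l, n-m) + d(l, n-(k-l)), \]
which matches the claimed formula.

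For the second projection $Y \to X$, note that a point $V \in X$ with $\dim(V \cap W^\perp) = j$ has a unique preimage, namely $V_1 = V \cap W^\perp$, so the map is a bijection on the stratum $X_j := \{V : \dim(V \cap W^\perp) = j\}$. Hence $\dim X \ge \dim X_j = \dim Y$. For the reverse inequality I would stratify $X = \bigsqcup_{j' \ge j} X_{j'}$ where $X_{j'} := \{V : \dim(V \cap W^\perp) = j'\}$, and by the same incidence argument (now picking $V_1$ of dimension $j'$ and requiring $V/V_1$ to meet $W^\perp/V_1$ trivially, an open condition) obtain $\dim X_{j'} = j'(n-m-j') + (k-j')(n-k)$. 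The discrete difference $f(j'+1) - f(j') = k - m - 2j' - 1$ is strictly negative for every $j' \ge j$: using $l \le \min\{k, m\}$ one has $2l \le k + m$, hence $k - m - 2j - 1 = 2l - k - m - 1 < 0$, and the expression only decreases as $j'$ grows. Therefore $\dim X_{j'} < \dim X_j$ for $j' > j$, and $\dim X = \dim Y$ as desired.

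The only slightly delicate step is this monotonicity of $\dim X_{j'}$, which is precisely where the hypothesis $l \le m$ enters; the rest is routine Schubert-cell fiber bookkeeping.
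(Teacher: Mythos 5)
Your proof is correct and follows essentially the same route as the paper: both identify the set with the Schubert cycle $\{V : \dim(V\cap W^\perp)\ge k-l\}$ via Lemma \ref{easylem} and compute its dimension by first choosing the $(k-l)$-plane $V\cap W^\perp$ inside $W^\perp$ and then extending to a $k$-plane, giving $d(k-l,n-m)+d(l,n-(k-l))$. The paper only sketches the claim that the stratum with $\dim(V\cap W^\perp)=k-l$ is generic (citing Griffiths--Harris), whereas your stratification argument and the computation $f(j'+1)-f(j')=k-m-2j'-1<0$ make that step explicit; the hypotheses $l\le k,m$ are used in the same place in both arguments.
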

\begin{proof}
By Lemma \ref{easylem}, we have
\begin{align*}
    \{ V\in G(k,n):\dim(\pi_V(W))\le l \}&=\{V\in G(k,n): \dim(V\cap W^\perp)\ge k-l\},
\end{align*} 
By condition $l\le k$, we see $\dim(V\cap W^\perp)\ge k-l$ makes sense.
    Actually, $\{ V\in G(k,n):\dim(V\cap W^\perp)\ge k-l \}$ is a Schubert cycle whose dimension is not hard to calculate (see \cite{griffiths2014principles} Chapter 1, Section 5). Here, we provide a sketch of proof. The condition $n-k\ge m-l$ says that $\dim(W^\perp)\ge k-l$. The condition $l\le m$ yields that generic $V\in G(k,n)$ satisfies $\dim(V\cap W^\perp)=\dim(V)+\dim(W^\perp)-n=k+n-m-n \le k-l$ (imagine the generic case is when $V$ and $W^\perp$ are as transverse as possible). Therefore, generic $V\in \{ V\in G(k,n):\dim(V\cap W^\perp)\ge k-l \}$ will satisfy $\dim(V\cap W^\perp)=k-l$. It is equivalent to calculate the dimension of 
    \[\{V\in G(k,n): \dim(V\cap W^\perp)=k-l\}.\]
Note that we can identify $V$ with $(v_1,\dots,v_{k})$, where $\{v_i\}_{i=1}^{k}$ are orthonormal basis of $V$. We first choose $v_1,\dots,v_{k-l}\in W^\perp$, which is equivalent to choosing an $(k-l)$-plane in $W^\perp$. It gives $d(k-l,n-m)$-dimensional choices. Next, we choose $v_{k-l+1},\dots,v_k$ from $\textup{span}\{v_1,\dots,v_{k-l}\}^\perp$, which gives $d(l,n-(k-l))$-dimensional choices.
Adding together, gives the dimension $d(k-l,n-m)+d(l,n-(k-l))$.    
\end{proof}

We state the key result.

\medskip

\begin{proposition}\label{highprop}
    Fix $0<k<n$. For $A\subset \R^n$, define
    \[ E_s(A):=\{V\in G(k,n): \dim(\pi_V(A))<s\}. \]
For $0<a<n$ and $0<s\le \min\{k,a\}$, define
\[ T(a,s):=\sup_{\dim(A)=a}\dim(E_s(A)). \]
Then we have the following lower bound for $T(a,s)$. We write $a=m+\beta, s=l+\ga$ where $m,l\in\N$ and $\beta,\ga\in(0,1]$.

\begin{enumerate}
\item \fbox{{\bf Type 1:} $1\le m\le n+l-k, \ga>\frac{\beta+1}{2}$} 
\begin{align}\label{t1}
    T(m+\beta,l+\ga)&\ge d(k-l,n-m)+d(l,n-(k-l))+2\ga-(\beta+1)\\
    \nonumber&=k(n-k)-(m-l)(k-l)+2\ga-(\beta+1).
\end{align}

\item \fbox{{\bf Type 2:} $m\le n+l-k, \ga>\beta$} 
\begin{align}\label{t2}
    T(m+\beta,l+\ga)&\ge d(k-l,n-m)+d(l,n-(k-l))\\
    \nonumber&=k(n-k)-(m-l)(k-l).
\end{align}

\item \fbox{{\bf Type 3:} $m\le n+l-k-1, \ga>\frac{\beta}{2}$} 
\begin{align}\label{t3}
T(m+\beta,l+\ga)&\ge d(k-l,n-m-1)+d(l,n-(k-l))+2\ga-\beta\\
\nonumber&=k(n-k)-(m+1-l)(k-l)+2\ga-\beta.
\end{align}

\item \fbox{{\bf Type 4:} $1\le l, m\le n+l-k-1$} 
\begin{align}\label{t4}
    T(m+\beta,l+\ga)&\ge d(k-(l-1),n-m)+d(l-1,n-(k-l+1))\\
    \nonumber&=k(n-k)-(m-l+1)(k-l+1).
\end{align}

\end{enumerate}

\end{proposition}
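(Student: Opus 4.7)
The plan is to prove each of the four inequalities by an explicit construction of a Borel set $A \subset \R^n$ with $\dim A = m+\beta$, together with a subfamily of $V \in G(k,n)$ of the claimed dimension for which $\dim \pi_V(A) < l+\gamma$. All four constructions take $A$ to be a Minkowski sum $A = A_0 + W$ where $A_0$ is a low-dimensional set and $W$ a linear subspace, so that $\pi_V(A) = \pi_V(A_0) + \pi_V(W)$ and $\dim \pi_V(A) \le \dim \pi_V(A_0) + \dim \pi_V(W)$; the latter summand is governed by Schubert-type conditions on $V$ relative to $W$ via Lemma \ref{easylem2}.

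For Types 2 and 4, $A_0$ is an arbitrary Borel set of dimension $\beta$ in a linear complement of $W$. For Type 2 take $W$ an $m$-plane: the hypothesis $\gamma > \beta$ gives $\dim \pi_V(A) \le \beta + l < l + \gamma$ whenever $\dim \pi_V(W) \le l$, and Lemma \ref{easylem2} directly furnishes a family of such $V$ of dimension $d(k-l, n-m) + d(l, n-(k-l))$. For Type 4, the same construction combined with the stricter condition $\dim \pi_V(W) \le l-1$ and the bound $\beta \le 1$ gives $\dim \pi_V(A) \le \beta + l - 1 < l + \gamma$; Lemma \ref{easylem2} (with parameter $l-1$) yields a family of dimension $d(k-l+1, n-m) + d(l-1, n-k+l-1)$. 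In both cases the argument reduces to checking the hypotheses of the lemma.

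For Types 1 and 3 we sharpen the bound by replacing $A_0$ with the sharp 2D example $\bdA_{\gamma,\beta+1}$ or $\bdA_{\gamma,\beta}$ of Proposition \ref{bdAE}, thereby harvesting an additional $\dim \bdE$-dimensional family of exceptional directions from the 2D theory. For Type 1, take $A = \bdA_{\gamma,\beta+1} + W'$ where $\bdA_{\gamma,\beta+1}$ lies in a 2-plane $U \subset \R^n$ and $W'$ is an $(m-1)$-plane orthogonal to $U$, so that $\dim A = (\beta+1)+(m-1) = m+\beta$. For $V \in G(k,n)$ we analyse the composite linear map $\psi_V : U \xrightarrow{\pi_V} V \to V/\pi_V(W')$: when $\ker \psi_V = U \cap (V^\perp + W')$ is one-dimensional and its orthogonal complement inside $U$ lies in $\bdE_{\gamma,\beta+1}$, the image $\psi_V(\bdA_{\gamma,\beta+1})$ has dimension strictly below $\gamma$, which together with the incidence $\dim(V^\perp \cap W') \ge m-l-1$ (forcing $\dim \pi_V(W') \le l$) yields $\dim \pi_V(A) < l + \gamma$. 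Type 3 is entirely parallel, with $A = \bdA_{\gamma,\beta} + W$ for $W$ an $m$-plane and the incidence $\dim(V^\perp \cap W) \ge m-l$.

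The main obstacle is the dimension count for the Type 1 and Type 3 exceptional families, which becomes a Schubert-style calculation. Parametrising $V^\perp$ by the stratum index $j = \dim(V^\perp \cap W')$, one first picks the intersection $W_0 \in G(j, m-1)$, then passes to the quotient $\R^n/W_0$ where the kernel condition becomes a linear incidence $\bar\ell \subset \bar V^\perp + \bar W'$ inside a smaller Grassmannian, and finally lifts $V^\perp$ back via a Hom-graph into $W_0$. The $\ell$-parameter itself contributes $2\gamma-(\beta+1)$ dimensions (from $\bdE$, by Proposition \ref{bdAE}) for Type 1, and $2\gamma - \beta$ for Type 3. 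Summing these contributions and optimising over $j$ recovers the claimed totals $k(n-k) - (m-l)(k-l) + 2\gamma - (\beta+1)$ for Type 1 and $k(n-k) - (m+1-l)(k-l) + 2\gamma - \beta$ for Type 3, at the optima $j=0$ and $j=m-l$ respectively. The delicate point is to verify the admissibility of these optima in the hypothesised parameter ranges and to check that the various Grassmannian dimensions telescope to the stated values.
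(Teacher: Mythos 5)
Your four constructions are exactly the paper's (Type 2: $\R^m\times I$; Types 1 and 3: $\bdA\times\R^{m-1}$ and $\bdA\times\R^m$; Type 4: the $\R^m\times I$ example with the Schubert condition at level $l-1$). Your Type~2 and Type~4 arguments are correct and complete --- in fact your Type~4 is a direct construction, which is cleaner than the paper's reduction of Type~4 to Type~1 with $\gamma'=1$. The mechanism by which a given $V$ is shown to lie in $E_{l+\gamma}(A)$ for Types 1 and 3 (one-dimensional kernel $\R\phi=U\cap(V^\perp+W')$, orthogonal line $\ell\in\bdE$, incidence $\dim(V^\perp\cap W')$ large enough to force $\dim\pi_V(W')\le l$) is also equivalent to the paper's formulation, where the paper instead writes the single condition $\dim\bigl(\pi_V(\R\theta\oplus W')\bigr)\le l$ for $\theta\in\bdE$.

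The gap is in the dimension count for Types 1 and 3, which you outline but do not carry out, and which as stated contains an inconsistency. You propose stratifying by $j=\dim(V^\perp\cap W')$ and "optimising over $j$", claiming the optimum is $j=0$ for Type 1 and $j=m-l$ for Type 3. But your own incidence constraint for Type 1 is $j\ge m-l-1$, so $j=0$ is inadmissible whenever $m>l+1$; the generic (hence highest-dimensional) admissible stratum should be $j=m-l-1$, not $j=0$. More importantly, the Schubert stratification you sketch also has to account for the extra degree of freedom coming from the $\bdE$-line $\ell$, and for each such $\ell$ the constraint is really $\dim\bigl(V^\perp\cap(\R\phi\oplus W')\bigr)\ge k-l$ where $\phi$ spans $\ell^\perp\cap U$: the quantity $\dim(V^\perp\cap W')$ is only indirectly constrained, and one cannot simply "sum the contributions" at a fixed $j$ without controlling the overlap between the exceptional families attached to different $\ell\in\bdE$. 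The paper avoids both problems by applying Lemma \ref{easylem2} directly to the $(m+1)$-plane $\R\theta\oplus W'$ (giving $\dim\cV_\theta=d(k-l,n-m-1)+d(l,n-(k-l))$ in one shot), and then carefully excising a lower-dimensional subset $\cW_\theta$ to make the families $\cU_\theta=\cV_\theta\setminus\cW_\theta$ pairwise disjoint across $\theta\in\bdE$, so that their union has dimension $\dim\bdE+\dim\cU_\theta$. Until you either complete the stratified count with the correct admissible $j$ and control the $\ell$-dependence, or adopt the paper's disjointness argument, the Type 1/3 bounds are not established.
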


\begin{proof}
    \noindent
{\bf Type 2}: When $n-m\ge k-l, \ga> \beta$, then \begin{equation}\label{typ2}
    T(m+\beta,l+\ga)\ge d(k-l,n-m)+d(l,n-(k-l))=k(n-k)-(m-l)(k-l).
\end{equation}

We choose $A=\R^m\times I\times \{\underbrace{(0,\dots,0)}_{(n-m-1)\times 0}\}$, where $I\subset \R$ has Hausdorff dimension $\beta$. For simplicity, we just write $A=\R^m\times I$. We want to find those $V\in G(k,n)$ such that $\dim(\pi_V(A))<l+\ga$. We note that if $V$ satisfies $\dim(\pi_V(\R^m))\le l$, then $\dim(\pi_V(\R^m\times I))\le l+\beta<l+\ga$, which means $V\in E_s(A)$. This shows that
\[ E_s(A)\supset \{V\in G(k,n): \dim(\pi_V(\R^m))\le l\}. \]
By Lemma \ref{easylem2}, the right hand side  has dimension $d(k-l,n-m)+d(l,n-(k-l))$. This gives \eqref{typ2}.

\medskip

\noindent
{\bf Type 3}: When $n-m-1\ge k-l, \ga> \frac{\beta}{2}$, then
\begin{align}\label{typ3}
T(m+\beta,l+\ga)&\ge d(k-l,n-m-1)+d(l,n-(k-l))+2\ga-\beta\\
\nonumber&=k(n-k)-(m+1-l)(k-l)+2\ga-\beta.
\end{align}

We choose $A=\bdA_{\ga,\beta}\times \R^m\times \{\underbrace{(0,\dots,0)}_{(n-m-2)\times 0}\}$. Recall that $\bdA_{\ga,\beta}\subset \R^2$ is the example discussed in Proposition
\ref{bdAE}. We will use $\R^2$ to denote $\R^2\times \{\underbrace{(0,\dots,0)}_{(n-2)\times 0}\}$. We use $\R^m$ to denote $\{(0,0)\}\times \R^m\times \{(0,\dots,0)\}$. For each direction $\theta$ parallel to $\R^2$, we use $\ell_\theta$ to denote the line parallel to $\R^2$ but orthogonal to $\theta$. We define
\[ \cU_\theta=\{ V\in G(k,n): \dim(\pi_V(\R\theta \oplus \R^m))\le l, \dim(\pi_V(\R^m))=l, \dim(\pi_V(\R^2\times \R^m))=l+1 \}, \]
\[\cV_\theta=\{ V\in G(k,n): \dim(\pi_V(\R\theta \oplus \R^m))\le l\},\]
\[\cW_\theta=\{V\in \cV_\theta: \dim(\pi_V(\R^m))\le l-1\}\cup \{V\in \cV_\theta: \dim(\pi_V(\R^2\times \R^m)\le l \}.\]
It is not hard to see $\cU_\theta=\cV_\theta\setminus \cW_\theta$. To get some intuition on these sets, we suggest readers to look back at \eqref{view}. $\cU_\theta, \cV_\theta$ and $\cW_\theta$ are higher dimensional analog of $\R\P^1\setminus \{*\}, \R\P^1$ and $ \{*\}$.

Let $\bE_{\ga,\beta}=\{ \theta\in S^1: \dim(\pi_{l_\theta}(\bdA_{\ga,\beta}))<\ga \}$. We claim that if $\theta\in \bE_{\ga,\beta}$, then $\cV_\theta\subset E_{l+\ga}(A)$. In other words, any $V\in \cV_\theta$ satisfies $\dim(\pi_V(A))<l+\ga$. We first note that $\bdA_{\ga,\beta}\subset \pi_{l_\theta}(\bdA_{\ga,\beta})+\R\theta$, so
\[ A=\bdA_{\ga,\beta}\times \R^m\subset (\pi_{l_\theta}(\bdA_{\ga,\beta})+\R\theta)\times \R^m= \pi_{l_\theta}(\bdA_{\ga,\beta})+(\R\theta\oplus\R^m). \]
Therefore,
\[ \dim(\pi_V(A))\le \dim(\pi_{l_\theta}(\bdA_{\ga,\beta}))+\dim(\pi_V(\R\theta\oplus\R^m))< \ga+l. \]

We see that $E_{l+\ga}(A)\supset \bigcup_{\theta\in \bE_{\ga,\beta}} \cV_\theta.$
$\cV_\theta$ may intersect with each other, so we delete a tiny portion $\cW_\theta$ from $\cV_\theta$, so that the remaining part $\cU_\theta=\cV_\theta\setminus \cW_\theta$ are disjoint with each other. Then we have
\[ E_{l+\ga}(A)\supset \bigsqcup_{\theta\in\bE_{\ga,\beta}}\cU_\theta. \]
To make the argument precise, we first apply Lemma \ref{easylem2} to see \[\dim(\cV_\theta)=d(k-l,n-m-1)+d(l,n-(k-l))=k(n-k)-(m+1-l)(k-l).\]
To calculate $\dim(\cW_\theta)$, we apply Lemma \ref{easylem2} again to see
\[ \dim(\{V\in G(k,n):\dim(\pi_V(\R^m))\le l-1\})=k(n-k)-(m-l+1)(k-l+1), \]
and
\[ \dim(\{V\in G(k,n):\dim(\pi_V(\R^2\times \R^m))\le l\})=k(n-k)-(m+2-l)(k-l). \]
Therefore, $\dim(\cW_\theta)<\dim(\cV_\theta)$ and hence
\[ \dim(\cU_\theta)=\dim(\cV_\theta)=d(k-l,n-m-1)+d(l,n-(k-l)). \]

Next we show $\cU_\theta\cap \cU_{\theta'}=\emptyset$ for $\theta\neq\theta'$.
By contradiction, suppose $V\in \cU_\theta\cap \cU_{\theta'}$. By definition, $V$ satisfies
\begin{align*}
\dim(\pi_V(\R\theta \oplus \R^m))\le l,\\
\dim(\pi_V(\R\theta' \oplus \R^m))\le l,\\
\dim(\pi_V(\R^m))=l,\\
\dim(\pi_V(\R^2\times \R^m))=l+1. 
\end{align*}
The first and third conditions imply $\pi_V(\R\theta)\subset \pi_V(\R^m)$. Similarly, the second and third conditions imply $\pi_V(\R\theta')\subset \pi_V(\R^m)$. Noting that $\textup{span}\{\R\theta,\R\theta'\}=\R^2$, we have 
\[ \pi_V(\R^2\times \R^m)\subset \pi_V(\R^m), \]
which has dimension $=l$, contradicting the fourth condition.

We have shown that $E_{l+\ga}(A)\supset \bigsqcup_{\theta\in \bE_{\ga,\beta}}\cU_\theta$, which has dimension
$\dim(\bE_{\ga,\beta})+\dim(\cU_\theta)$
\[ =2\ga-\beta+d(k-l,n-m-1)+d(l,n-(k-l)). \]
This gives \eqref{typ3}.

\medskip

\noindent
{\bf Type 1 }: When $m\ge 1, n-m\ge k-l, \ga>\frac{\beta+1}{2}$, then
\begin{align}\label{typ1}
    T(m+\beta,l+\ga)&\ge d(k-l,n-m)+d(l,n-(k-l))+2\ga-(\beta+1)\\
    &=k(n-k)-(m-l)(k-l)+2\ga-(\beta+1).
\end{align}

The trick is to write $T(m+\beta,l+\ga)=T((m-1)+(\beta+1),l+\ga)$ and then apply \textit{Type} 2 construction. We will choose 
\[ A=\bdA_{\ga,\beta+1}\times \R^{m-1}\times \{(0,\dots,0)\}. \]
It will give the bound \eqref{typ1}.

\medskip

\noindent
{\bf Type 4 }: When $l\ge 1, n-m\ge k-l+1$, then
\begin{align}\label{typ4}
    T(m+\beta,l+\ga)&\ge d(k-(l-1),n-m)+d(l-1,n-(k-l+1))\\
    \nonumber&=k(n-k)-(m-l+1)(k-l+1).
\end{align}

The idea is to apply \textit{Type} 1 construction. Note that here we do not have the condition $\ga>\beta$, so we cannot directly apply \textit{Type} 1. The trick is to write 
\[ T(m+\beta,l+\ga)\ge T(m+\beta,(l-1)+1). \]
Now our new $l$ becomes $l-1$, and our new $\ga$ becomes $1$. We apply \textbf{Type 1}, and obtain \eqref{typ4}.
\end{proof}

\bigskip

Given $a=m+\beta, s=l+\ga$, we compare the lower bounds of these four types and see that 
\[ {\bf Type\ 1}\ge {\bf Type\ 2}\ge {\bf Type\ 3}\ge {\bf Type\ 4}. \]
Therefore, the strategy to find the lower bound of $T(a,s)$ is by testing the requirement of {\bf Type 1}, {\bf Type 2},... one by one, and use the lower bound as soon as the requirement is satisfied. If neither requirement is satisfied, then we only expect the trivial lower bound $T(a,s)\ge 0$.  

For our application in Theorem \ref{explicitT}, we only use {\bf Type 2} example.

\bigskip

\subsection{Proof of Theorem \ref{explicitT}}
We will use {\bf Type 2} bound in Proposition \ref{highprop}. We consider the case $k\le \frac{n}{2}$. Suppose $\beta\in(0,1], \ga\in (\beta,\frac{k}{n}(1+\beta)]$. By Theorem \ref{thm}, we have
\begin{equation}
    T(1+\beta,\ga)\le T(1+\beta,\frac{k}{n}(1+\beta))\le k(n-k)-k.
\end{equation}
To obtain a lower bound, we use {\bf Type 2}. We choose $m=1,l=0$, so $a=1+\beta$ and $s=\ga$. We can verify the condition that $m\le n+l-k, \ga>\beta$. Therefore, we obtain
\begin{equation}
    T(1+\beta,\ga)\ge k(n-k)-k.
\end{equation}
This finishes the proof that \begin{equation}
    T(1+\beta,\ga)=k(n-k)-k.
\end{equation}

Next, we consider the case $k\ge \frac{n}{2}$. Suppose $\beta\in(0,1], \ga\in (\beta,(1-\frac{k}{n})+\frac{k}{n}\beta]$. By Theorem \ref{thm}, we have
\begin{equation}
    T(n-1+\beta,k-1+\ga)\le T(n-1+\beta,\frac{k}{n}(n-1+\beta))\le k(n-k)-(n-k).
\end{equation}
To obtain a lower bound, we use {\bf Type 2}. We choose $m=n-1,l=k-1$, so $a=n-1+\beta$ and $s=k-1+\ga$. We can verify the condition that $m\le n+l-k, \ga>\beta$. Therefore, we obtain
\begin{equation}
    T(n-1+\beta,k-1+\ga)\ge k(n-k)-(n-k).
\end{equation}
This finishes the proof that \begin{equation}
    T(n-1+\beta,k-1+\ga)= k(n-k)-(n-k).
\end{equation}

\bibliographystyle{abbrv}
\bibliography{refs}
%***************************************

\end{document}